\theoremstyle{plain}
\newtheorem{defi}{Definition}[section]
\newtheorem{thm}[defi]{Theorem}
\newtheorem{ex}[defi]{Example}
\newtheorem{rem}[defi]{Remark}
\newtheorem{prop}[defi]{Proposition}
\newtheorem{lemma}[defi]{Lemma}
\newtheorem{cor}[defi]{Corollary}
\newtheorem{question}[defi]{Question}
\newcommand{\R}{\mathbb{R}}
\newcommand{\N}{\mathbb{N}}
\newcommand{\Z}{\mathbb{Z}}
\renewcommand{\O}{\mathcal{O}}
\renewcommand{\L}{\mathcal{L}}
\newcommand{\diam}{\mathrm{diam}}
\renewcommand{\epsilon}{\varepsilon}
\renewcommand{\emptyset}{\varnothing}
\title[On the minimisation of eigenvalues of the Laplacian]{On the Isoperimetric and Isodiametric inequalities and the minimisation of eigenvalues of the Laplacian}
\author{Sam Farrington}
\address{Sam Farrington\\ Department of Mathematical Sciences \\ Durham University \\
Mathematical Sciences and Computer Science Building \\
Upper Mountjoy Campus \\ Stockton Road \\
Durham DH1 3LE \\
United Kingdom.
}
\email{sam.farrington@durham.ac.uk}  
\subjclass[2020]{49R05, 49Q10, 35J25, 35P15}
\keywords{Spectral shape optimisation, mixed boundary conditions.}
\date{\today}
\begin{document}

\begin{abstract}
    We consider the problem of minimising the $k$-th eigenvalue of the Laplacian with some prescribed boundary condition over collections of convex domains of prescribed perimeter or diameter. It is known that these minimisation problems are well-posed for Dirichlet eigenvalues in any dimension $d\geq 2$ and any sequence of minimisers converges to the ball of unit perimeter or diameter respectively as $k\to +\infty$. In this paper, we show that the same is true in the case of Neumann eigenvalues under diameter constraint in any dimension and under perimeter constraint in dimension $d=2$. We also consider these problems for mixed Dirichlet-Neumann eigenvalues, under an additional geometric constraint, and discuss some applications of our proof techniques.

\end{abstract}

\maketitle

\section{Introduction}

Given $\Omega \subset \mathbb{R}^{d}$ a bounded convex domain, it is well-known that the Dirichlet $-\Delta_{\Omega}^{D}$ and Neumann $-\Delta_{\Omega}^{N}$ Laplacians acting on $\mathcal{L}^{2}(\Omega)$ have discrete spectra, each consisting of a sequence of eigenvalues accumulating only at $+\infty$. We denote the Dirichlet eigenvalues by \begin{equation}
    0< \lambda_{1}(\Omega) < \lambda_{2}(\Omega) \leq \lambda_{3}(\Omega) \leq \cdots \uparrow + \infty
\end{equation} and the Neumann eigenvalues by \begin{equation}
    0 = \mu_{1}(\Omega) < \mu_{2}(\Omega) \leq \mu_{3}(\Omega) \leq \cdots \uparrow + \infty.
\end{equation} Moreover, it is well-known that these eigenvalues obey Weyl's law, which asserts that \begin{equation}\label{eq:Weyl_law_orig}
    \lambda_{k}(\Omega) \sim \mu_{k}(\Omega) \sim 4\pi^{2} \left(\frac{k}{\omega_{d}|\Omega|}\right)^{2/d} =: \frac{W_{d}}{|\Omega|^{2/d}}k^{2/d}, \enspace \text{as} \enspace k\uparrow +\infty,
\end{equation} where $|\Omega|$ is the $d$-dimensional volume of $\Omega$ and $\omega_{d}$ is the volume of the $d$-dimensional unit ball. From Weyl's law, if one knows either the entire Dirichlet spectrum or the entire Neumann spectrum of $\Omega$, then one can determine the volume of $\Omega$.

Naïvely, Weyl's law and the isoperimetric/isodiametric inequality together suggest that if one minimises either Dirichlet or Neumann eigenvalues over the collection of bounded convex domains of a given perimeter/diameter then for large $k$ minimisers should be close to the ball, i.e. the domain with the largest volume. To be clear, by perimeter here we mean the $(d-1)$-dimensional Hausdorff measure of the boundary $\partial \Omega$, which we denote by $|\partial \Omega|$. 

In this vein, one can consider the four following spectral shape optimisation problems: \begin{gather}
\inf\left\lbrace \lambda_{k}(\Omega) : \Omega \subset \mathbb{R}^{d} \; \text{bounded, convex}, \; |\partial \Omega| = 1 \right\rbrace, \label{prob:dir_perim} \\ 
    \inf\left\lbrace \lambda_{k}(\Omega) : \Omega \subset \mathbb{R}^{d} \; \text{bounded, convex}, \; \diam(\Omega) = 1 \right\rbrace, \label{prob:dir_diam} \\
    \inf\left\lbrace \mu_{k}(\Omega) : \Omega \subset \mathbb{R}^{d} \; \text{bounded, convex}, \; |\partial \Omega| = 1 \right\rbrace, \label{prob:neu_perim} \\ 
    \inf\left\lbrace \mu_{k}(\Omega) : \Omega \subset \mathbb{R}^{d} \; \text{bounded, convex}, \; \diam(\Omega) = 1 \right\rbrace. \label{prob:neu_diam} 
\end{gather} 

For each of the above problems we will discuss when one has existence of minimisers and, if so, the geometric behaviour of minimisers as $k\to +\infty$. To do this we need to introduce a notion of convergence onto the collection of bounded convex domains. We use the Hausdorff metric here, which is defined as \begin{equation}\label{eq:Hausdorff_metric}
    d_{H}(\Omega_{1},\Omega_{2}) := \max \left\lbrace \sup_{x\in\Omega_{1}} \inf_{y\in \Omega_{2}} \lVert x-y\rVert_{2}, \sup_{x\in \Omega_{2}}\inf_{y\in \Omega_{1}} \lVert x-y\rVert_{2}\right\rbrace
\end{equation} for two bounded convex domains $\Omega_{1},\Omega_{2} \subset \mathbb{R}^{d}$, where $\lVert \cdot \rVert_{2}$ is the standard Euclidean norm. In this paper, the convergence is always up to rigid planar motions where necessary due to the invariance of Dirichlet and Neumann eigenvalues under rigid planar motions.

The shape optimisation problem \eqref{prob:dir_perim} was first considered in the case of $k=1$ and for planar domains by Courant in \cite{Courant-1918}, where it was shown that the ball is the minimiser. This can also be shown using the Faber-Krahn \cite[Thm. 3.2.1.]{Henrot-2006} and isoperimetric inequalities. The existence of a minimiser to \eqref{prob:dir_perim} for any $k\in\mathbb{N}$ when $d = 2$ was given by van den Berg and Iversen in \cite{van-den-Berg-Iversen-2013} under the more general conditions of minimising the $k$-th Dirichlet eigenvalue among non-empty bounded open sets of unit perimeter without any convexity or connectivity constraints. They showed that a minimiser in this case is necessarily convex. Bucur and Freitas in \cite{Bucur-Freitas-2013} later showed, when $d=2$, that any sequence $\Omega_{k}^{*}$ of minimisers to \eqref{prob:dir_perim} Hausdorff converges to the ball of unit perimeter as $k \to +\infty$. For $d\geq 3$, van den Berg in \cite{van-den-Berg-2015} deduced existence of minimisers and the same asymptotic behaviour as in the two-dimensional case.

\begin{thm}[{\cite[Thm. 1, adapted]{van-den-Berg-2015}}]\label{thm:Dirichlet_perim}
For each $k\geq 1$, there exists a minimiser to \eqref{prob:dir_perim}. Moreover, any sequence of minimisers Hausdorff converges to the ball of unit perimeter as $k\to +\infty$. 
 \end{thm}

In the same paper, \cite{van-den-Berg-2015}, van den Berg also considered the shape optimisation problem \eqref{prob:dir_diam} and deduced an analogous result to Theorem \ref{thm:Dirichlet_perim} in this case.

\begin{thm}[{\cite[Thm. 1, adapted]{van-den-Berg-2015}}]\label{thm:Dirichlet_diam}
For each $k\geq 1$, there exists a minimiser to \eqref{prob:dir_diam}. Moreover, any sequence of minimisers Hausdorff converges to the ball of unit diameter as $k\to +\infty$. 
 \end{thm}

Bogosel, Henrot and Lucardesi also studied the shape optimisation problem \eqref{prob:dir_diam} in \cite{Bogosel-Henrot-Lucardesi-2018} and deduced that the ball is only a minimiser for finitely many $k \in \mathbb{N}$ and minimisers are necessarily bodies of constant width.

In light of Theorems \ref{thm:Dirichlet_perim} and \ref{thm:Dirichlet_diam}, the aforementioned naïve notion of minimisers for large $k$ being close to a ball holds for Dirichlet eigenvalues. However, for the case of Neumann eigenvalues, the minimisation problem \eqref{prob:neu_perim} is well-known to be ill-posed for any $k\geq 2$ and $d\geq 3$ as the infimum is zero in this case but $\mu_{2}(\Omega) > 0$ for any bounded convex domain $\Omega$. Hence, the naïve philosophy that motivated these questions at the beginning of this paper fails here. The fact that this infimum is zero can easily be deduced by considering the sequence of cuboids \begin{equation}
    (0,\epsilon) \times  \cdots \times (0,\epsilon) \times \left(0,\tfrac{1}{2(d-1)}(\epsilon^{2-d}-2\epsilon)\right) \subset \mathbb{R}^{d}
\end{equation} as $\epsilon \downarrow 0$. When $d=2$, it was shown by van den Berg, Bucur and Gittins in \cite[Thm 3.2.]{van-den-Berg-Bucur-Gittins-2016} that the minimisation problem \begin{equation}
    \inf \left\lbrace \mu_{k}(\Omega) : \Omega \subset \mathbb{R}^{2} \enspace \text{rectangle}, \, |\partial \Omega| = 1 \right\rbrace
\end{equation} has a minimiser for all $k\geq 3$ and any sequence of minimisers Hausdorff converges to the square of unit perimeter as $k\to +\infty$. The analogous result is true in the more general setting of planar convex domains as we now state.

\begin{thm}\label{thm:neumann_perim}
When $d=2$, for each $k\geq 3$ there exists a minimiser to \eqref{prob:neu_perim}. Moreover, any sequence of minimisers Hausdorff converges to the ball of unit perimeter as $k\to +\infty$. 
\end{thm}

Our methods in this paper are asymptotic and in fact only assert that minimisers exist for $k$ sufficiently large. One can establish that they exist for each $k\geq 3$, as in the statement of Theorem \ref{thm:neumann_perim}, and not for $k=2$ from a simple adaptation of the methods in \cite[\S 2]{Cavallani2023}, see Lemma \ref{lem:2d_existence_lemma}. In \cite[\S 2]{Cavallani2023}, the authors study the `interior problem' \begin{equation}
    \inf\lbrace \mu_{k}(\Omega) : \Omega \subset D, \, \Omega \; \text{convex domain} \rbrace,
\end{equation} where $D$ is a fixed bounded convex domain in $\mathbb{R}^{2}$, and give sufficient and necessary conditions for the existence of minimisers. On the other hand, our methods allow one to show that any sequences of minimisers to the `interior problem' must necessarily Hausdorff converge to $D$ as $k\to +\infty$.

In contrast to the perimeter case, we have that for any $k \geq 2$ and $d\geq 2$ the infimum in \eqref{prob:neu_diam} is non-zero from the Payne-Weinberger inequality, see \cite{Payne-Weinberger-1960} and \cite{Bebendorf-2003}, which asserts for any bounded convex domain $\Omega \subset \mathbb{R}^{d}$ \begin{equation}\label{eq:payne-weinberger}
    \mu_{2}(\Omega) > \frac{\pi^{2}}{\diam(\Omega)^{2}}.
\end{equation} This inequality is sharp and attained in the limit by the sequence of cuboids \begin{equation}
    (0,\epsilon) \times \cdots \times (0,\epsilon) \times \left(0,\sqrt{1-(d-1)\epsilon^{2}}\right) \subset \mathbb{R}^{d}
\end{equation} as $\epsilon \downarrow 0$, in the case where $\diam(\Omega) = 1$. It is not immediately clear if and when the shape optimisation problem \eqref{prob:neu_diam} admits minimisers. However, minimisers do eventually exist for $k$ sufficiently large and one can obtain the asymptotic behaviour of minimisers as $k\to +\infty$. In particular, in dimension two, as in the perimeter case, we can again say that they exist for all $k\geq 3$ adapting the methods in \cite[\S 2]{Cavallani2023}.

\begin{thm}\label{thm:neumann_diam}
    For any $d\geq 2$, there exists a constant $N_{d} \in \mathbb{N}$ such that for all $k\geq N_{d}$ there exists a minimiser to \eqref{prob:neu_diam}. Moreover, any sequence $\Omega_{k}^{*}$ of minimisers Hausdorff converges to the ball of unit diameter as $k\to +\infty$. When $d=2$, one has $N_{2} = 3$ is the lowest possible value of $N_{2}$.
\end{thm}

The proofs of Theorems \ref{thm:neumann_perim} and \ref{thm:neumann_diam} rest on proving a suitably good family of upper bounds for the Neumann eigenvalue counting function \begin{equation}\label{eq:Neumann_counting}
    \mathcal{N}_{\Omega}^{N}(\alpha) := \# \lbrace k \in \mathbb{N} : \mu_{k}(\Omega) < \alpha \rbrace 
\end{equation} for a bounded convex domain $\Omega\subset \mathbb{R}^{d}$. This family of bounds provides the existence of minimisers and gives their asymptotic behaviour. We also give a family of lower bounds on the Dirichlet eigenvalue counting function \begin{equation}\label{eq:Dirichlet_counting}
    \mathcal{N}_{\Omega}^{D}(\alpha) := \# \lbrace k \in \mathbb{N} : \lambda_{k}(\Omega) < \alpha \rbrace ,
\end{equation} from which one can prove a `generalisation' of Weyl's law for bounded convex domains which we will discuss in Section \ref{sec:other_applications}.

The family of upper bounds on the Neumann counting functions can also be used to give a variation of \eqref{prob:neu_perim} for which one does have the existence of minimisers and for which any sequence of minimisers Hausdorff converges to the ball of unit perimeter as $k\to +\infty$. The philosophy is if we don't allow the domains in the collection under consideration to grow too quickly with $k$ in terms of their diameter then we obtain non-degenerate asymptotic behaviour.

\begin{thm}\label{thm:neumann_per_diam}
    For any $d\geq 3$ and any $f: \mathbb{N} \to \mathbb{R}_{>0}$ with $1\ll f(k) \ll k^{1/d(d-1)}$, there exists a constant $N_{d,f} \in \mathbb{N}$ such that for all $k\geq N_{d,f}$ there exists a minimiser to \begin{equation}\label{eq:neu_per_diam_prob}
    \inf \left\lbrace \mu_{k}(\Omega) : \Omega \subset \mathbb{R}^{d} \enspace \text{bounded convex domain}, \, |\partial\Omega| \leq 1,\, \diam(\Omega) \leq f(k)\right\rbrace.
    \end{equation} Moreover, any sequence $\Omega_{k}^{*}$ of minimisers Hausdorff converges to the ball of unit perimeter as $k\to +\infty$.
\end{thm}

From the foregoing discussion, the perimeter constraint minimisation problems \eqref{prob:dir_perim} for Dirichlet eigenvalues and \eqref{prob:neu_perim} for Neumann eigenvalues are well-posed and ill-posed respectively for any $k\geq 2$ and any dimension $d\geq 3$. In Section \ref{sec:zaremba}, we consider a variation of the perimeter constraint eigenvalue minimisation problem in the case of mixed Dirichlet-Neumann, so-called Zaremba eigenvalues in which the shape optimisation problem exhibits the same behaviour as that of \eqref{prob:dir_perim}, see Theorem \ref{thm:Dirichlet_perim}. For this we introduce an additional geometric constraint on the collection of convex domains which yields a canonical way of prescribing the mixed boundary conditions and allows us to obtain eigenvalue bounds.

\begin{rem}
     The results of Theorems \ref{thm:neumann_perim}, \ref{thm:neumann_diam} and \ref{thm:neumann_per_diam} extend to the case of Robin eigenvalues, with positive Robin parameter as the Robin eigenvalue counting function is bounded above by the Neumann eigenvalue counting function for any bounded domain $\Omega$, see for example \cite[Thm. 3.2.9.]{Levitin-Polterovich-Mangoubi-2023}. Moreover, one can show the existence of minimisers for all $k\geq 1$ using Proposition 2.3 in \cite{Antunes-Freitas-Kennedy-2013} and the lower semi-continuity of Robin eigenvalues under Hausdorff convergence of bounded convex domains, see \cite[Prop. 3.1.]{Cito-2019}.
\end{rem}

\textbf{Plan of the paper:} In Section \ref{sec:counting_bounds}, we prove upper bounds on the Neumann eigenvalue counting function and lower bounds on the Dirichlet eigenvalue counting function for bounded convex domains. Using these bounds, we prove Theorems \ref{thm:neumann_perim}, \ref{thm:neumann_diam} and \ref{thm:neumann_per_diam} in Section \ref{sec:main_proofs}. In Section \ref{sec:other_applications}, we discuss applications of these bounds with regards to the geometric stability of Weyl's law. In Section \ref{sec:zaremba}, we consider some cases where one mixes Dirichlet and Neumann boundary conditions. In Appendix \ref{sec:appendix}, we present an open question motivated by the proof techniques used in this paper and its relation with Pólya's eigenvalue conjecture.

\section{Bounding Dirichlet and Neumann counting functions}\label{sec:counting_bounds} 

For a bounded convex domain $\Omega \subset \mathbb{R}^{d}$, recall the definition of its Neumann eigenvalue counting function $\mathcal{N}_{\Omega}^{N}$ from \eqref{eq:Neumann_counting} and its Dirichlet eigenvalue counting function $\mathcal{N}_{\Omega}^{D}$ from \eqref{eq:Dirichlet_counting}. We aim to prove a suitably good family of upper bounds on $\mathcal{N}_{\Omega}^{N}$, which in turn give us lower bounds on the Neumann eigenvalues of $\Omega$, and a suitably good family of lower bounds on $\mathcal{N}_{\Omega}^{D}$, which in turn give us upper bounds on the Dirichlet eigenvalues of $\Omega$. 

In order to prove these bounds, we need to divulge briefly into some convex geometry. We remark that all the results in convex geometry stated in this paper are generally stated for compact convex sets but we given the equivalent statement here for bounded convex domains for simplicity of exposition. A well-known result in this area is the Minkowski-Steiner theorem, which gives an expression of the volume of fattening a bounded convex domain by taking the Minkowski sum with a ball of radius $\delta > 0$, in terms of geometric quantities associated with $\Omega$. 

From here forward, let $\mathcal{O}_{d}$ denote the collection of bounded convex domains endowed with the Hausdorff topology induced by the metric given by \eqref{eq:Hausdorff_metric}.

\begin{thm}[Minkowski-Steiner, {see \cite[\S 6]{Gruber-2007}}] Let $\Omega$ be a $d$-dimensional convex domain and $\delta > 0$. Then there exist continuous maps $s_{2},s_{3},\ldots ,s_{d-1}: \mathcal{O}_{d} \to \mathbb{R}$ called the quermassintegrals of $\Omega$ such that \begin{equation}
    |\Omega+\delta \mathbb{B}_{d}| = |\Omega| +  |\partial \Omega|\delta + \sum_{j=2}^{d-1} \binom{d}{j} s_{j}(\Omega)\delta^{j} + \omega_{d}\delta^{d},
\end{equation} where $\mathbb{B}_{d}$ is the $d$-dimensional unit ball. In particular, as a map $\mathcal{O}_{d} \times (0,+\infty) \to \mathbb{R}$, $|\Omega+\delta \mathbb{B}_{d}|$ is continuous.
    
\end{thm}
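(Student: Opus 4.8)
The plan is to prove the polynomial identity first for convex polytopes by a direct geometric decomposition, and then to transfer it to an arbitrary convex domain by polytopal approximation in the Hausdorff metric, recovering the coefficients by a Vandermonde argument. Continuity of the $s_{j}$ will then fall out of the same linear-algebraic bookkeeping.

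First I would take $\Omega = P$ a convex polytope and decompose the outer parallel shell $(P + \delta\mathbb{B}_{d}) \setminus \overline{P}$ according to which face of $P$ carries the (unique) nearest point. Over the relative interior of a $j$-dimensional face $F$ of $P$, the set of points of $P + \delta\mathbb{B}_{d}$ whose metric projection onto $\overline{P}$ lies in $F$ is isometric to the product $F \times \big(N(F) \cap \delta\mathbb{B}_{d}\big)$, where $N(F)$ is the normal cone of $F$, a convex cone of dimension $d - j$; hence this piece has $d$-dimensional volume $\mathcal{H}^{j}(F)\,\theta(F)\,\omega_{d-j}\,\delta^{d-j}$, where $\theta(F) \in (0,1]$ is the fraction of $\mathbb{S}^{d-j-1}$ cut out by $N(F)$. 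Since the normal cones of the faces tile $\mathbb{R}^{d}$, these pieces cover $P + \delta\mathbb{B}_{d}$ with pairwise intersections of measure zero, and summing over all faces of all dimensions gives
\begin{equation}
    |P + \delta\mathbb{B}_{d}| = \sum_{j=0}^{d} \Bigg( \sum_{F : \dim F = j} \mathcal{H}^{j}(F)\,\theta(F)\,\omega_{d-j} \Bigg) \delta^{d-j},
\end{equation}
a polynomial in $\delta$ of degree $d$. The $\delta^{0}$ coefficient is $|P|$ (take $F = P$, $\theta = 1$); the $\delta^{1}$ coefficient is $\sum_{F\,\mathrm{facet}} \mathcal{H}^{d-1}(F) = |\partial P|$, since over a facet the shell piece is a right prism $F \times [0,\delta]$; and the leading coefficient is $\omega_{d}$, which I would extract from the asymptotics $\delta\mathbb{B}_{d} \subseteq P + \delta\mathbb{B}_{d} \subseteq (\diam(P) + \delta)\mathbb{B}_{d}$ (up to translation): dividing by $\delta^{d}$ and letting $\delta \to +\infty$ forces it. (Equivalently, the external angles $\theta(v)$ at the vertices of $P$ sum to $1$.)

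Next I would pass to a general $\Omega \in \mathcal{O}_{d}$. Choose convex polytopes $P_{n}$ with $P_{n} \to \overline{\Omega}$ in the Hausdorff metric; then, since Minkowski addition is $1$-Lipschitz for the Hausdorff metric, $P_{n} + \delta\mathbb{B}_{d} \to \overline{\Omega} + \delta\mathbb{B}_{d}$ for each fixed $\delta > 0$, and since volume is continuous along Hausdorff-convergent sequences of convex bodies we get $|P_{n} + \delta\mathbb{B}_{d}| \to |\Omega + \delta\mathbb{B}_{d}|$ for every $\delta > 0$. Writing $|P_{n} + \delta\mathbb{B}_{d}| = \sum_{j=0}^{d} c_{n,j}\,\delta^{j}$ and evaluating at $d + 1$ distinct points $\delta_{0}, \ldots, \delta_{d}$, inversion of the (invertible) Vandermonde system expresses each $c_{n,j}$ as a fixed linear combination of the values $|P_{n} + \delta_{i}\mathbb{B}_{d}|$; hence each $c_{n,j}$ converges to a limit $s_{j}'(\Omega)$, and $|\Omega + \delta\mathbb{B}_{d}| = \sum_{j} s_{j}'(\Omega)\,\delta^{j}$ for all $\delta > 0$. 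Comparing extreme coefficients and using continuity of volume and of perimeter under Hausdorff convergence of convex bodies gives $s_{0}'(\Omega) = |\Omega|$, $s_{1}'(\Omega) = |\partial\Omega|$, $s_{d}'(\Omega) = \omega_{d}$; set $s_{j}(\Omega) = \binom{d}{j}^{-1} s_{j}'(\Omega)$ for $2 \le j \le d - 1$. Finally, continuity of each $s_{j} : \mathcal{O}_{d} \to \mathbb{R}$, and joint continuity of $(\Omega,\delta) \mapsto |\Omega + \delta\mathbb{B}_{d}|$, follow by the same Vandermonde inversion applied to a Hausdorff-convergent sequence $\Omega_{n} \to \Omega$ in $\mathcal{O}_{d}$: $|\Omega_{n} + \delta_{i}\mathbb{B}_{d}| \to |\Omega + \delta_{i}\mathbb{B}_{d}|$ propagates to $s_{j}(\Omega_{n}) \to s_{j}(\Omega)$, and then $|\Omega + \delta\mathbb{B}_{d}|$ is a polynomial in $\delta$ whose coefficients depend continuously on $\Omega$.

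I expect the only real work to be in the polytope step: verifying that the nearest-face assignment is well defined (single-valuedness of the metric projection onto a convex set) and genuinely partitions the shell up to a null set, and that the piece over a $j$-face is isometric to $F \times \big(N(F) \cap \delta\mathbb{B}_{d}\big)$. Everything downstream is soft --- continuity of volume, perimeter and Minkowski addition under Hausdorff convergence of convex bodies are classical, and the extraction of the polynomial coefficients is linear algebra. An alternative to the explicit shell decomposition is to quote Minkowski's theorem that $|\lambda_{1}K_{1} + \lambda_{2}K_{2}|$ is a homogeneous degree-$d$ polynomial in $(\lambda_{1},\lambda_{2})$ and specialise to $K_{1} = \Omega$, $K_{2} = \mathbb{B}_{d}$, but this merely relocates the same difficulty into the proof of that theorem.
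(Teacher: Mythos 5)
The paper does not prove this theorem; it cites Gruber \cite[\S 6]{Gruber-2007} and uses it as a black box. So there is no in-paper proof to compare against. Your proof, however, is correct, self-contained, and essentially the standard elementary derivation of Steiner's formula, so it is worth a brief assessment on its own terms.

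The polytope step is sound. The metric projection $p_{\overline{P}}$ onto a closed convex set is single-valued, the assignment of $x\in P+\delta\mathbb{B}_{d}$ to the unique face $F$ whose relative interior contains $p_{\overline{P}}(x)$ is a genuine partition, and the piece over a $j$-face is exactly $\mathrm{relint}(F)+\bigl(N(F)\cap\delta\mathbb{B}_{d}\bigr)$: the inclusion $\supseteq$ is precisely the defining inequality of the normal cone, and $\subseteq$ is immediate from $\lVert x-p_{\overline{P}}(x)\rVert\le\delta$. Since $N(F)\subset(\mathrm{lin}\,F)^{\perp}$ this is a metric product, giving the stated volume $\mathcal{H}^{j}(F)\,\theta(F)\,\omega_{d-j}\,\delta^{d-j}$ (and your $\theta(F)=\tfrac12$ for facets correctly reproduces the coefficient $|\partial P|$). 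The $\delta\to+\infty$ extraction of the leading coefficient is a clean way to avoid invoking the sum-of-external-angles identity, though both work. The Vandermonde recovery of coefficients from values at $d+1$ sample radii is a tidy and genuinely rigorous way to pass the polynomial identity and the continuity statement from polytopes to limits, avoiding any hand-waving about ``the coefficients must converge''. Two small points you should flag in a fully written-out version: (i) the continuity of perimeter under Hausdorff convergence of convex bodies is itself classical but not free; it follows quickly from monotonicity of perimeter for nested convex bodies together with the sandwiching $(1+\alpha_{n})^{-1}\Omega\subset P_{n}\subset(1+\alpha_{n})\Omega$, or alternatively you can sidestep it entirely by identifying $s_{1}'(\Omega)$ as the Minkowski content $\lim_{\delta\to 0^{+}}\delta^{-1}(|\Omega+\delta\mathbb{B}_{d}|-|\Omega|)$, which equals $\mathcal{H}^{d-1}(\partial\Omega)$ for convex $\Omega$; (ii) for $d=2$ the list $s_{2},\dots,s_{d-1}$ is empty and the formula degenerates to $|\Omega|+|\partial\Omega|\delta+\pi\delta^{2}$, which your argument handles but which is worth noting explicitly since the paper's statement silently assumes $d\ge 3$ in writing the sum. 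As you remark at the end, the cited source (Gruber) instead develops mixed volumes and deduces Steiner's formula as the special case $K_{1}=\Omega$, $K_{2}=\mathbb{B}_{d}$ of polynomiality of $|\lambda_{1}K_{1}+\lambda_{2}K_{2}|$; your direct shell decomposition is more elementary and more transparent for this particular instance, at the cost of not yielding the general mixed-volume machinery (notably the Alexandrov--Fenchel inequality, which the paper also invokes and which your route would not give).
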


It is now worth noting some properties of quermassintegrals. Namely, they are monotone with respect to inclusion, i.e. $s_{j}(\Omega_{1})\leq s_{j} (\Omega_{2})$ for all $j$ and all $\Omega_{1}\subset \Omega_{2}$, and are continuous in the Hausdorff topology, see \cite[\S 6]{Gruber-2007}. The monotonicity property will allow us to obtain bounds on the Neumann eigenvalue counting function which are monotone with respect to domain inclusion.

We are now ready to state and prove a family of upper bounds for the Neumann eigenvalue counting function. The proof is originally inspired by the proof of Proposition A.1. in \cite{Gittins-Lena-2020}, whereby the authors give an upper bound on the Neumann counting function of a $C^{2}$ convex domain. One key aspect is the leading term of their bound is not the leading term in Weyl's law which we need for our proofs. In fact, we are unable to provide bounds with the Weyl leading term here, however we can construct upper bounds on the Neumann counting function whose leading term is arbitrarily close to the Weyl term.

\begin{prop}\label{prop:Neumann_counting function_upper_bound}
    For any $n\in \mathbb{N}$, $\Omega \in \O_{d}$ and $\alpha > 0$,  \begin{equation}
        \mathcal{N}_{\Omega}^{N}(\alpha)  \leq \frac{ n|\Omega|}{(\mu_{n+1}^{*})^{d/2}} \alpha^{d/2}  + r_{n}(\Omega;\alpha),
    \end{equation} where \begin{equation}
    \begin{split}
        r_{n}(\Omega;\alpha) & =  \left(\frac{\kappa_{n}}{\sqrt{\mu_{n+1}^{*}}}\right)^{d-1}\left(2\kappa_{n}+3 \right)d^{1/2}|\partial\Omega| \alpha^{(d-1)/2} \\ & \qquad + \sum_{j=2}^{d-1}  \binom{d}{j} (4d)^{j/2} \left(\frac{\kappa_{n}}{\sqrt{\mu_{n+1}^{*}}}\right)^{d-j}s_{j}(\Omega)\alpha^{(d-j)/2} +(4d)^{d/2}\omega_{d},   
    \end{split}
    \end{equation} $\mu_{n+1}^{*}$ denotes the $(n+1)$-th Neumann eigenvalue of the $d$-dimensional unit cube, $s_{j}(\Omega)$ denotes the $j$-th quermassintegral of $\Omega$ from the Minkowski-Steiner formula and $\kappa_{n} = \left\lceil \pi^{-1}\sqrt{d\mu_{n+1}^{*}}\,\right\rceil$. Moreover, the remainder $r_{n}(\Omega;\alpha)$ is monotone with respect to inclusion of convex domains. 
\end{prop}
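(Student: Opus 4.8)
The plan is to tile $\mathbb{R}^d$ by a grid of cubes of side length $h$ to be chosen, keep only the cubes entirely contained in $\Omega$, and compare the Neumann form of $\Omega$ with that of this disjoint union of cubes together with the leftover boundary layer. By Neumann bracketing (decomposing $\Omega$ into the interior cubes and the remaining region $\Omega \setminus \bigcup (\text{interior cubes})$, all with Neumann conditions on the new interfaces), we get
\[
\mathcal{N}_{\Omega}^{N}(\alpha) \leq \sum_{Q \text{ interior cube}} \mathcal{N}_{Q}^{N}(\alpha) + \mathcal{N}_{\Omega_{\partial}}^{N}(\alpha),
\]
where $\Omega_{\partial}$ is the union of the boundary cubes (those meeting $\partial\Omega$). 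For a cube $Q$ of side $h$, $\mathcal{N}_{Q}^{N}(\alpha) = \mathcal{N}_{(0,1)^d}^{N}(h^2\alpha)$ by scaling, and we bound this crudely: if $h^2 \alpha < \mu_{n+1}^{*}$ then $\mathcal{N}_{(0,1)^d}^{N}(h^2\alpha) \leq n$. So the natural choice is $h = \sqrt{\mu_{n+1}^{*}/\alpha}$ (or slightly less so the strict inequality holds — this is where the precise constant $\mu_{n+1}^{*}$ in the leading term comes from), giving a contribution of at most $n$ per interior cube, and the number of interior cubes is at most $|\Omega|/h^{d} = n^{-1}\cdot n |\Omega| (\mu_{n+1}^{*})^{-d/2}\alpha^{d/2}$... wait, more simply at most $|\Omega|/h^d = |\Omega|(\alpha/\mu_{n+1}^{*})^{d/2}$, so the main term is $n|\Omega|(\mu_{n+1}^{*})^{-d/2}\alpha^{d/2}$, exactly as stated.

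Next I would handle the remainder $\mathcal{N}_{\Omega_{\partial}}^{N}(\alpha)$. The set $\Omega_{\partial}$ of boundary cubes is contained in the Minkowski fattening $\Omega + \sqrt{d}\,h\,\mathbb{B}_{d}$ and also contains... actually for an upper bound on its Neumann counting function I want to enclose $\Omega_{\partial}$ in something whose eigenvalues I control. The cleanest route: estimate the \emph{number} of boundary cubes. By convexity, every cube meeting $\partial\Omega$ lies inside the shell $(\Omega + \sqrt{d}h\,\mathbb{B}_d)\setminus(\Omega \ominus \sqrt{d}h\,\mathbb{B}_d)$, whose volume is controlled by the Minkowski–Steiner expansion: $|\Omega + \delta\mathbb{B}_d| - |\Omega| = |\partial\Omega|\delta + \sum_{j=2}^{d-1}\binom{d}{j}s_j(\Omega)\delta^j + \omega_d \delta^d$ with $\delta = \sqrt{d}h$. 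Dividing by $h^d$ bounds the number of boundary cubes. Then on each boundary cube I again use Neumann bracketing against a single cube of side $h$ — but now $h^2\alpha$ need not be small, so I bound $\mathcal{N}_{(0,1)^d}^N(t) \leq \#\{(m_1,\dots,m_d): \pi^2\sum m_i^2 < t\} \leq (1 + \pi^{-1}\sqrt{t})^d \leq (4d)^{d/2}$-type estimates when $t \lesssim$ the relevant scale; more precisely $\mathcal{N}_{(0,1)^d}^N(h^2\alpha) \leq (1+\pi^{-1}h\sqrt{\alpha\cdot d})^d$ — and plugging $h = \sqrt{\mu_{n+1}^*/\alpha}$, each boundary cube contributes at most $(1 + \pi^{-1}\sqrt{d\mu_{n+1}^*})^d \leq (2\kappa_n)^d$ or so, explaining the $\kappa_n = \lceil \pi^{-1}\sqrt{d\mu_{n+1}^*}\rceil$ and the $(4d)^{j/2}$ factors. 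Combining the count of boundary cubes (the $\delta = \sqrt d h = \sqrt d \sqrt{\mu_{n+1}^*/\alpha}$ powers convert $\delta^j/h^d = (\sqrt d)^j (\mu_{n+1}^*)^{(j-d)/2}\alpha^{(d-j)/2}$ into exactly the stated powers of $\alpha$) with the per-cube bound reproduces $r_n(\Omega;\alpha)$ term by term.

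Finally, the monotonicity claim: the main term $n|\Omega|(\mu_{n+1}^*)^{-d/2}\alpha^{d/2}$ is monotone since $|\Omega|$ is, and $\mathcal{N}_\Omega^N(\alpha)$ itself is monotone under inclusion of convex domains (a standard fact — extending a Neumann eigenfunction by reflection or using that the form domain only grows, or more carefully via the known monotonicity of Neumann counting functions among convex sets under inclusion), so since $\mathcal{N}_\Omega^N(\alpha) \leq (\text{main term}) + r_n(\Omega;\alpha)$ and the inequality is an identity-free bound, one checks directly that $r_n$, being built from $|\partial\Omega|$, $s_j(\Omega)$ (all monotone under convex inclusion by the properties quoted after the Minkowski–Steiner theorem) plus the constant $(4d)^{d/2}\omega_d$, is monotone; I would state it as: $r_n(\Omega_1;\alpha) \leq r_n(\Omega_2;\alpha)$ whenever $\Omega_1 \subseteq \Omega_2$ are convex, which follows coefficient-by-coefficient. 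The main obstacle I anticipate is pinning down the \emph{exact} constants in the boundary-cube count and the per-cube Neumann bound so that they match the stated $r_n$ precisely — in particular getting the factor $(2\kappa_n + 3)d^{1/2}$ on the $|\partial\Omega|$ term requires being careful about whether a boundary cube's center can be at distance up to $\sqrt d h$ from $\partial\Omega$ on either side and handling the off-by-one in counting grid cubes versus volume; the structural argument (Neumann bracketing + Minkowski–Steiner) is routine, but the bookkeeping of constants is where care is needed.
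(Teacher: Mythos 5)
Your structural outline --- cover $\Omega$ by a grid of cubes of side $h = \sqrt{\mu_{n+1}^{*}/\alpha}$ so that interior cubes contribute at most $n$ each, count boundary cells via the Minkowski--Steiner expansion, and observe that $r_{n}$ is monotone because $|\partial\Omega|$ and the quermassintegrals are --- matches the paper's broad strategy. But there is a genuine gap in how you bound the contribution of the boundary cells, and it is precisely where the paper does something nontrivial.

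You propose to bound $\mathcal{N}^{N}_{Q\cap\Omega}(\alpha)$ for a boundary cube $Q$ of side $h$ by a lattice-count estimate for the cube $Q$ itself, i.e.\ something like $\bigl(1+\pi^{-1}\sqrt{h^{2}\alpha\,d}\bigr)^{d}$. But $Q\cap\Omega$ is a convex piece, not a cube, and Neumann eigenvalues are not monotone under set inclusion: Neumann bracketing for the decomposition $Q = (Q\cap\Omega)\sqcup(Q\setminus\Omega)$ gives $\mathcal{N}^{N}_{Q}(\alpha) \leq \mathcal{N}^{N}_{Q\cap\Omega}(\alpha) + \mathcal{N}^{N}_{Q\setminus\Omega}(\alpha)$, which is the wrong direction. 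As written there is no valid route to the per-cell bound you use, so the remainder term is unestablished. The paper's device, which your proposal does not contain, is to refine the grid near the boundary only: boundary cells have side $\kappa_{n}^{-1}\delta$ rather than $\delta$. Since $\Omega$ is convex, each cell $C_{m,\kappa_{n}^{-1}\delta}\cap\Omega$ is convex with diameter $\leq \kappa_{n}^{-1}\delta\sqrt{d}$, and the Payne--Weinberger inequality gives $\mu_{2}(C_{m,\kappa_{n}^{-1}\delta}\cap\Omega) \geq \pi^{2}\kappa_{n}^{2}/(d\delta^{2}) \geq \delta^{-2}\mu_{n+1}^{*}$ by the choice $\kappa_{n}=\lceil\pi^{-1}\sqrt{d\mu_{n+1}^{*}}\rceil$. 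Hence each fine boundary cell contributes exactly $1$ to the counting function at the threshold, and the $\kappa_{n}^{d-j}$ and $(4d)^{j/2}$ factors in $r_{n}$ come from counting these $\kappa_{n}$-times-finer cells against the Minkowski--Steiner shell volume. This convexity plus Payne--Weinberger step is the heart of the estimate and cannot be replaced by a comparison of $Q\cap\Omega$ with $Q$.

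Separately, you assert that $\mathcal{N}^{N}_{\Omega}(\alpha)$ is monotone under inclusion of convex domains as ``a standard fact.'' It is not --- Neumann eigenvalues are not monotone under inclusion in general, and this is precisely why the proposition highlights that the \emph{remainder} $r_{n}$ is monotone: that is the usable substitute (it is later invoked in Lemma~\ref{lem:Neumann_lower_divergence}). Your final coefficient-by-coefficient observation that $r_{n}(\Omega_{1};\alpha)\leq r_{n}(\Omega_{2};\alpha)$ for $\Omega_{1}\subseteq\Omega_{2}$ convex, via monotonicity of $|\partial\Omega|$ and $s_{j}(\Omega)$, is exactly right and is all that is needed; the detour through unjustified monotonicity of $\mathcal{N}^{N}$ should be dropped.
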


\begin{proof}
    Fix $\delta > 0$ and $n \in \N$. For $m \in \Z^{d}$, let $Q_{m,\delta} := \delta(m+(0,1)^{d})$. Note that \begin{equation}
        \mathcal{N}_{Q_{m,\delta}}^{N}(\delta^{-2}\mu_{n+1}^{*}) \leq n
    \end{equation} by the definition of the Neumann counting function and the scaling property of Neumann eigenvalues under homothety. Setting $\mathcal{I}_{\delta} := \lbrace m \in \mathbb{Z}^{d} : Q_{m,\delta} \cap \Omega = Q_{m,\delta}\rbrace$, we immediately see that $\# \mathcal{I}_{\delta} \leq \delta^{-d}|\Omega|$ as for any $m\in \mathcal{I}_{\delta}$ we must have $Q_{m,\delta} \subset \Omega$. Then define $\Omega_{\delta}^{i}:= \bigcup_{m\in \mathcal{I}_{\delta}} Q_{m,\delta}$. Taking $\kappa_{n}\in \N$ as given in the statement of the proposition, let \begin{equation}
        \mathcal{J}_{\delta} := \lbrace m \in \mathbb{Z}^{d} : Q_{m,\kappa_{n}^{-1}\delta} \cap \Omega \neq \emptyset, Q_{m,\kappa_{n}^{-1}\delta} \cap \Omega_{\delta}^{i} = \emptyset \rbrace, \quad \Omega_{\kappa_{n}^{-1}\delta}^{o} := \Omega \cap \bigcup_{m\in \mathcal{J}_{\delta}} Q_{m,\delta}.
    \end{equation} As $\kappa_{n}$ is a positive integer and by construction, we see that $\Omega_{\delta}^{i} \cap \Omega_{\delta}^{o} = \emptyset$, as $\kappa_{n}^{-1} \delta \Z \supset \delta \Z$, and that $\Omega_{\delta}^{i} \cup \Omega_{\kappa_{n}^{-1}\delta}^{o} = \Omega$ up to a set of measure zero. We now argue that for any $m\in \mathcal{J}_{\delta}$, $Q_{m,\kappa_{n}^{-1}\delta}$ must be a subset of the region \begin{equation}
        \partial \Omega_{\delta,\kappa_{n}} := \lbrace x \in \Omega : d(x,\partial \Omega) \leq (2+\kappa_{n}^{-1}) \delta d^{1/2}  \rbrace \cup  \lbrace x \in \mathbb{R}^{d}\backslash \Omega : d(x,\partial \Omega) \leq 2(\kappa_{n})^{-1}\delta d^{1/2}  \rbrace.
    \end{equation} Firstly suppose that \begin{equation}
        Q_{m,\kappa_{n}^{-1}\delta } \cap \lbrace x \in \Omega : d(x,\partial \Omega) > (2+\kappa_{n}^{-1}) \delta d^{1/2}  \rbrace \neq \emptyset.
    \end{equation} Then we see that $d(m\kappa_{n}^{-1}\delta,\partial \Omega) > 2\delta d^{1/2}$. Now let $m^{*} \in \Z^{d}$ be the unique integer lattice point such that $Q_{m^{*},\delta}\supset Q_{m,\kappa_{n}^{-1}\delta }$. Then one easily sees that $\lVert m\kappa_{n}^{-1} \delta - m^{*} \delta \rVert_{2} \leq \delta d^{1/2}$ and so one must have that $d(m^{*}\delta, \partial \Omega) > \delta d^{1/2}$. But this implies that $m^{*} \in \mathcal{I}_{\delta}$ which implies that $m\not \in \mathcal{J}_{\delta}$ and we have a contradiction. Now suppose that \begin{equation}
        Q_{m,\kappa_{n}^{-1}\delta } \cap \lbrace x \in \mathbb{R}^{d}\backslash \Omega : d(x,\partial \Omega) > 2(\kappa_{n})^{-1}\delta d^{1/2}  \rbrace \neq \emptyset.
    \end{equation} Then we have that $d(m\kappa_{n}^{-1}\delta,\partial \Omega) > \delta d^{1/2}$, but again this contradicts $m\in \mathcal{J}_{\delta}$ as in this case $Q_{m,\kappa_{n}^{-1}\delta} \cap \Omega = \emptyset$. Hence, we must indeed have that 
    $Q_{m,\kappa_{n}^{-1}\delta} \subset \partial \Omega_{\delta,\kappa_{n}}$ if $m\in \mathcal{J}_{\delta}$.
    
    By the Minkowski-Steiner formula and the convexity of $\Omega$, we see that \begin{equation}
       |\partial \Omega_{\delta,\kappa_{n}}| \leq  \left(2+3\kappa_{n}^{-1}\right)d^{1/2}|\partial\Omega|\delta  + \sum_{j=2}^{d-1} \binom{d}{j} (4d)^{j/2} (\kappa_{n})^{-j} s_{j}(\Omega)\delta^{j} +(4d)^{d/2}\omega_{d}\delta^{d}.
    \end{equation} We then can immediately deduce a bound on the cardinality of $\mathcal{J}_{\delta}$: \begin{equation}\begin{split}
        \# \mathcal{J}_{\delta} & \leq (\kappa_{n})^{d}\delta^{-d}|\partial \Omega_{\delta,\kappa_{n}}| \\
        & \leq  \left(2(\kappa_{n})^{d}+3(\kappa_{n})^{d-1}\right)d^{1/2}|\partial\Omega|\delta^{-d+1} \\ & \qquad +  \sum_{j=2}^{d-1} \binom{d}{j}(4d)^{j/2} (\kappa_{n})^{d-j} s_{j}(\Omega)\delta^{-d+j}+(4d)^{d/2}\omega_{d}.
    \end{split}
    \end{equation} Observing that $Q_{m,\kappa_{n}^{-1}\delta} \cap \Omega$ is convex, by our choice of $\kappa_{n}$ and the Payne-Weinberger inequality, see \eqref{eq:payne-weinberger}, \begin{equation} 
        \mu_{2}(Q_{m,\kappa_{n}^{-1}\delta} \cap \Omega) \geq \delta^{-2}\mu_{n+1}^{*},
    \end{equation} and so $\mathcal{N}_{Q_{m,\kappa_{n}^{-1}\delta} \cap \Omega}^{N}(\delta^{-2}\mu_{n+1}^{*}) = 1$. By the variational characterisation of Neumann eigenvalues, see \cite[Thm. 3.1.11.]{Levitin-Polterovich-Mangoubi-2023}, it is straightforward to verify that $\mu_{k}(\Omega_{\delta}^{i} \cup \Omega_{\kappa_{n}^{-1}\delta}^{o}) \leq \mu_{k}(\Omega)$ for all $k\in \mathbb{N}$, and so it suffices to bound the Neumann eigenvalue counting function of $\Omega_{\delta}^{i} \cup \Omega_{\kappa_{n}^{-1}\delta}^{o}$. Taking $\delta =\alpha^{-1/2}(\mu_{n+1}^{*})^{1/2}$, we see that \begin{equation}
    \begin{split}
        \mathcal{N}_{\Omega_{\delta}^{i} \cup \Omega_{\kappa_{n}^{-1}\delta}^{o}}^{N}(\alpha) & \leq \sum_{m \in \mathcal{I}_{\delta}} \mathcal{N}_{Q_{m,\delta}}^{N}(\alpha) + \sum_{m \in \mathcal{J}_{\delta}} \mathcal{N}_{Q_{m,\kappa^{-1}\delta}\cap \Omega}^{N}(\alpha) \\
        & \leq \frac{n|\Omega|}{(\mu_{n+1}^{*})^{d/2}}\alpha^{d/2} + \left(\frac{\kappa_{n}}{\sqrt{\mu_{n+1}^{*}}}\right)^{d-1}\left(2\kappa_{n}+3\right) d^{1/2} |\partial\Omega| \alpha^{(d-1)/2} \\ 
        & \qquad + \sum_{j=2}^{d-1} \binom{d}{j} (4d)^{j/2} \left(\frac{\kappa_{n}}{\sqrt{\mu_{n+1}^{*}}}\right)^{d-j}s_{j}(\Omega)\alpha^{(d-j)/2} + (4d)^{d/2}\omega_{d}, \\
        & = \frac{n|\Omega|}{(\mu_{n+1}^{*})^{d/2}}\alpha^{d/2} + r_{n}(\Omega;\alpha),
    \end{split}
    \end{equation} which completes the proof.
\end{proof}

\begin{rem}
    To make the upper bound in Proposition \ref{prop:Neumann_counting function_upper_bound} explicit relies on the computability of the quermassintegrals of the domain. In dimension two, the bound simply reads \begin{equation}
        \mathcal{N}_{\Omega}^{N}(\alpha) \leq \frac{n|\Omega|}{\mu_{n+1}^{*}}\alpha + \sqrt{\frac{2}{\mu_{n+1}^{*}}}\left\lceil \frac{\sqrt{2\mu_{n+1}^{*}}}{\pi} \right\rceil\left(2\left\lceil \frac{\sqrt{2\mu_{n+1}^{*}}}{\pi} \right\rceil+3\right)|\partial \Omega|\sqrt{\alpha} + 8\pi.
    \end{equation}
\end{rem}

One can also play the same game with Dirichlet counting functions and prove a lower bound analogously to the proof of Proposition \ref{prop:Neumann_counting function_upper_bound}.

\begin{prop}\label{prop:Dir_counting_lower}
For any $n\in \N$, $\Omega \in \O_{d}$ and $\alpha > 0$,
    \begin{equation}
    \mathcal{N}_{\Omega}^{D}(\alpha) \geq \frac{n|\Omega|}{(\lambda_{n}^{*}+n^{-1})^{d/2}} \alpha^{d/2} - \frac{2nd^{1/2}|\partial \Omega|}{(\lambda_{n}^{*}+n^{-1})^{(d-1)/2}}\alpha^{(d-1)/2},
\end{equation} where $\lambda_{n}^{*}$ is the $n$-th Dirichlet eigenvalue of the $d$-dimensional unit cube.
\end{prop}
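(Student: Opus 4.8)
The plan is to mirror the proof of Proposition~\ref{prop:Neumann_counting function_upper_bound}, but with the Dirichlet counting function in place of the Neumann one, exploiting that Dirichlet eigenvalues \emph{increase} under domain restriction; this makes only the \emph{interior} tiling cubes relevant and removes the need for the Payne--Weinberger step. Fix $\alpha>0$, $n\in\N$ and a parameter $\delta>0$ to be chosen. For $m\in\Z^{d}$ put $C_{m,\delta}:=\delta(m+(0,1)^{d})$, let $\mathcal{I}_{\delta}:=\{m\in\Z^{d}:C_{m,\delta}\subset\Omega\}$, and set $\Omega_{\delta}^{i}:=\bigcup_{m\in\mathcal{I}_{\delta}}C_{m,\delta}$. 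The cubes being open and pairwise disjoint, $\Omega_{\delta}^{i}$ is a disjoint union of cubes contained in $\Omega$, so its Dirichlet spectrum, counted with multiplicity, is the union of those of the $C_{m,\delta}$; and since $\Omega_{\delta}^{i}\subset\Omega$, monotonicity of Dirichlet eigenvalues under inclusion gives $\lambda_{k}(\Omega)\le\lambda_{k}(\Omega_{\delta}^{i})$ for every $k$. Hence $\mathcal{N}_{\Omega}^{D}(\alpha)\ge\mathcal{N}_{\Omega_{\delta}^{i}}^{D}(\alpha)=\sum_{m\in\mathcal{I}_{\delta}}\mathcal{N}_{C_{m,\delta}}^{D}(\alpha)$.

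Next I would invoke the scaling $\lambda_{k}(C_{m,\delta})=\delta^{-2}\lambda_{k}((0,1)^{d})$, which gives $\mathcal{N}_{C_{m,\delta}}^{D}(\alpha)=\mathcal{N}_{(0,1)^{d}}^{D}(\delta^{2}\alpha)$, and then choose $\delta:=\big((\lambda_{n}^{*}+n^{-1})/\alpha\big)^{1/2}$, so that $\delta^{2}\alpha=\lambda_{n}^{*}+n^{-1}>\lambda_{n}^{*}$ and therefore $\mathcal{N}_{(0,1)^{d}}^{D}(\delta^{2}\alpha)\ge n$ by definition of the counting function. This reduces the claim to the estimate $\mathcal{N}_{\Omega}^{D}(\alpha)\ge n\,\#\mathcal{I}_{\delta}$ and then to a lower bound on $\#\mathcal{I}_{\delta}$.

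For the packing estimate I would argue that if $x\in\Omega$ satisfies $d(x,\partial\Omega)\ge 2\delta d^{1/2}$, then the (a.e.\ unique) open cube $C_{m,\delta}$ containing $x$ has corner $m\delta$ within distance $<\delta d^{1/2}$ of $x$, whence $d(m\delta,\partial\Omega)>\delta d^{1/2}$ and so $\overline{C_{m,\delta}}\subset\Omega$, i.e. $m\in\mathcal{I}_{\delta}$. Consequently the inner set $\Omega_{-2\delta d^{1/2}}:=\{x\in\Omega:d(x,\partial\Omega)\ge 2\delta d^{1/2}\}$ is covered, up to a null set, by $\bigcup_{m\in\mathcal{I}_{\delta}}C_{m,\delta}$, giving $\#\mathcal{I}_{\delta}\ge\delta^{-d}|\Omega_{-2\delta d^{1/2}}|$. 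I would then apply the convex-geometry collar estimate $|\Omega|-|\Omega_{-t}|\le t\,|\partial\Omega|$, valid for all $t\ge0$ (by the coarea formula applied to the $1$-Lipschitz function $x\mapsto d(x,\partial\Omega)$ together with the monotonicity of perimeter under inclusion of convex bodies, or equivalently via the Minkowski--Steiner formula applied to the inner parallel body), to obtain $\#\mathcal{I}_{\delta}\ge\delta^{-d}|\Omega|-2d^{1/2}\delta^{-d+1}|\partial\Omega|$. Substituting $\delta^{-2}=\alpha/(\lambda_{n}^{*}+n^{-1})$ and multiplying through by $n$ yields exactly the stated inequality; when its right-hand side is non-positive the assertion is vacuous, so no separate degenerate case needs discussion.

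I expect the only genuine points of care to be the collar bound $|\Omega|-|\Omega_{-t}|\le t|\partial\Omega|$ and the bookkeeping in the lattice-packing step needed to produce the constant $2d^{1/2}$ rather than $d^{1/2}$: the factor $2$ arises precisely because each tiling cube is indexed by a corner, so one loses $\delta d^{1/2}$ once in passing from a point to its cube's corner and once more in passing from that corner to the rest of the cube. Overall the argument should be strictly shorter than that of Proposition~\ref{prop:Neumann_counting function_upper_bound}: because the monotonicity of Dirichlet eigenvalues is used in the favourable direction, there is no need for the Payne--Weinberger inequality or for the outer boundary cubes, and no quermassintegrals enter the remainder term.
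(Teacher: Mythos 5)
Your proof is correct and follows essentially the same route as the paper's: tile $\Omega$ by the cubes $C_{m,\delta}$, keep only the fully interior ones (which by Dirichlet monotonicity and scaling each contribute at least $n$ eigenvalues below $\alpha$ once $\delta^2\alpha=\lambda_n^*+n^{-1}$), and lower-bound the number of such cubes by the convex collar estimate $|\Omega|-|\Omega_{-t}|\le t\,|\partial\Omega|$. The paper's proof is the same tiling-and-collar argument, and like you it observes that the favourable direction of Dirichlet monotonicity removes the need for outer boundary cubes, Payne--Weinberger, or quermassintegrals.
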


\begin{proof}
    For $m\in \Z^{d}$ and $\delta > 0$, define $Q_{m,\delta}$ and $\mathcal{I}_{\delta}$ as in the proof of Proposition \ref{prop:Neumann_counting function_upper_bound}. It is clear that for a given $m\in \Z^{d}$ if $Q_{m,\delta} \cap \lbrace x \in \Omega : d(x,\partial \Omega) \geq 2\delta d^{1/2} \rbrace \neq \emptyset$ then $m\in \mathcal{I}_{\delta}$. Hence, we obtain that \begin{equation}
        \# \mathcal{I}_{\delta} \geq \delta^{-d}| \lbrace x \in \Omega : d(x,\partial \Omega) \geq 2\delta d^{1/2} \rbrace| \geq \delta^{-d} \Omega - 2d^{1/2} \delta^{-d+1}|\partial \Omega|. 
    \end{equation} Noting that $\mathcal{N}_{Q_{m,\delta}}^{D}(\delta^{-2}(\lambda_{n}^{*}+n^{-1}))\geq n$, by the variational characterisation of Dirichlet eigenvalues, see \cite[Thm. 3.1.9.]{Levitin-Polterovich-Mangoubi-2023}, it suffices to bound the counting function of $\bigcup_{m\in \mathcal{I}_{\delta}} Q_{m,\delta}$ from below. Hence, taking $\delta= \alpha^{-1/2}(\lambda_{n}^{*}+n^{-1})^{1/2}$, we see that \begin{equation}
        \mathcal{N}_{\Omega}^{D}(\alpha) \geq \sum_{m\in \mathcal{I}_{\delta}} \mathcal{N}_{Q_{m,\delta}}^{D}(\alpha) \geq \frac{n|\Omega|}{(\lambda_{n}^{*}+n^{-1})^{d/2}} \alpha^{d/2} - \frac{2nd^{1/2}|\partial \Omega|}{(\lambda_{n}^{*}+n^{-1})^{(d-1)/2}}\alpha^{(d-1)/2},
    \end{equation} which completes the proof.
\end{proof}

\section{Proofs of Theorems \ref{thm:neumann_perim}, \ref{thm:neumann_diam} and  \ref{thm:neumann_per_diam}}\label{sec:main_proofs}

For $d\geq 3$, one can make the upper bound in Proposition \ref{prop:Neumann_counting function_upper_bound} uniform over a given collection of convex domains by constraining the diameter of the domains. This is due to the monotonicity of quermassintegrals as any convex domain of diameter $D>0$ is necessarily contained in the ball of diameter $2D$. Diameter is a very natural quantity to constrain in order to uniformly control Neumann eigenvalues. This is evident from the Payne-Weinberger inequality for $\mu_{2}$. Moreover, upper bounds for Neumann eigenvalues in terms of diameter have been deduced in \cite{Henrot-Michetti-2023} and \cite{Kroger-1999} and lower bounds for Neumann eigenvalues in terms of diameter are known, see Theorem 5.4 and Remark 5.7 in \cite{Cavallani2023}.

We now show how one can construct asymptotic uniform lower bounds on Neumann eigenvalues of convex domains. In fact, we do not need uniform control on the diameter of the convex domains, we only need a certain control for each $k$, as we now prove.

Recall that $\mathcal{O}_{d}$ denotes the collection of bounded convex domains endowed with the Hausdorff topology induced by the metric given by \eqref{eq:Hausdorff_metric} and $W_{d}$ denotes the Weyl constant from \eqref{eq:Weyl_law_orig}.

\begin{prop}\label{prop:Neu_lower_bound} For any $V >0 $ and any $f : \N \to \R_{>0}$ such that $c\leq f(k) = o(k^{1/d(d-1)})$ as $k\to +\infty$ for some $c > 0$,
 \begin{equation}
        \liminf_{k\to +\infty} k^{-2/d} \Big[ \inf \left\lbrace \mu_{k}(\Omega) : \Omega \in \O_{d}, \, |\Omega| \leq V, \, \mathrm{diam}(\Omega) \leq f(k) \right\rbrace \Big] \geq W_{d}V^{-2/d}.
    \end{equation}
\end{prop}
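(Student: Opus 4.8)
The plan is to re-run the argument of Lemma \ref{lem:Neumann_lower_divergence}, but with the free index $n$ in Proposition \ref{prop:Neumann_counting function_upper_bound} no longer fixed at $1$: instead one lets $n\to\infty$, exploiting that the leading coefficient there converges to the sharp Weyl constant. Indeed, Weyl's law applied to the unit cube $Q=(0,1)^{d}$ gives $\mu_{n+1}^{*}=\mu_{n+1}(Q)\sim W_{d}\,n^{2/d}$, hence $n\,(\mu_{n+1}^{*})^{-d/2}\to W_{d}^{-d/2}$ as $n\to\infty$. So, given $\epsilon>0$, I would first pick and fix an $n\in\N$ with $n\,(\mu_{n+1}^{*})^{-d/2}\le (1+\epsilon)W_{d}^{-d/2}$, and keep this $n$ for the rest of the proof.

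Write $I_{k}$ for the infimum in the statement. For any $\eta>0$ choose an admissible $\Omega$ (convex, $|\Omega|\le V$, $\diam(\Omega)\le f(k)$) with $\mu_{k}(\Omega)<I_{k}+\eta$. Since $\Omega$ has diameter at most $f(k)$ it is contained in a ball $B_{k}$ of radius $f(k)$, so by the monotonicity of $r_{n}(\,\cdot\,;\alpha)$ under inclusion we may replace $\Omega$ by $B_{k}$ in the remainder term of Proposition \ref{prop:Neumann_counting function_upper_bound}. Using $\mathcal{N}_{\Omega}^{N}(\alpha)\ge k$ for $\alpha=I_{k}+\eta>\mu_{k}(\Omega)$, together with Proposition \ref{prop:Neumann_counting function_upper_bound}, the bound $|\Omega|\le V$, and the explicit quermassintegrals $s_{j}(B_{k})=\omega_{d}f(k)^{d-j}$, $|\partial B_{k}|=d\omega_{d}f(k)^{d-1}$ of a ball, and then letting $\eta\downarrow 0$ (the right-hand side is a polynomial in $\sqrt{\alpha}$, hence continuous), one obtains
\[
    k \;\le\; (1+\epsilon)W_{d}^{-d/2}V\,I_{k}^{d/2} \;+\; \sum_{j=1}^{d-1}A_{n,j}\,f(k)^{d-j}\,I_{k}^{(d-j)/2} \;+\; (4d)^{d/2}\omega_{d},
\]
where the constants $A_{n,j}$ depend only on $n$ and $d$. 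Dividing by $k$ and substituting $m_{k}:=k^{-2/d}I_{k}$ (so that $I_{k}^{d/2}=k\,m_{k}^{d/2}$ and $I_{k}^{(d-j)/2}=k^{(d-j)/d}m_{k}^{(d-j)/2}$) turns this into
\[
    1 \;\le\; (1+\epsilon)W_{d}^{-d/2}V\,m_{k}^{d/2} \;+\; \sum_{j=1}^{d-1}A_{n,j}\,\theta_{k,j}\,m_{k}^{(d-j)/2} \;+\; k^{-1}(4d)^{d/2}\omega_{d},
\]
with $\theta_{k,j}:=f(k)^{d-j}k^{-j/d}$. The hypothesis $f(k)=o(k^{1/d(d-1)})$ is exactly what forces $\theta_{k,j}=o\big(k^{(1-j)/(d-1)}\big)\to 0$ for every $1\le j\le d-1$, while $k^{-1}(4d)^{d/2}\omega_{d}\to 0$ trivially.

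To conclude, set $\ell:=\liminf_{k}m_{k}=\liminf_{k}k^{-2/d}I_{k}$; if $\ell=+\infty$ there is nothing to prove, and $\ell>0$ by Lemma \ref{lem:Neumann_lower_divergence}. Along a subsequence realising $\ell$ the sequence $(m_{k})$ is bounded, so every remainder term in the last display tends to $0$ along that subsequence, leaving $1\le (1+\epsilon)W_{d}^{-d/2}V\ell^{d/2}$, i.e. $\ell\ge (1+\epsilon)^{-2/d}W_{d}V^{-2/d}$; letting $\epsilon\downarrow 0$ gives $\ell\ge W_{d}V^{-2/d}$, which is the claim. I expect the only genuinely delicate point to be the bookkeeping in the final display: for fixed $n$ the constants $A_{n,j}$ blow up as $n\to\infty$, but since $n$ is frozen before $k$ is sent to infinity, the factors $\theta_{k,j}\to 0$ neutralise them regardless of their size. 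Everything else is a routine adaptation of Lemma \ref{lem:Neumann_lower_divergence}, the one new ingredient being the convergence $n\,(\mu_{n+1}^{*})^{-d/2}\to W_{d}^{-d/2}$, which is immediate from Weyl's law for the cube.
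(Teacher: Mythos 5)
Your proof is correct and follows essentially the same strategy as the paper's: invoke Proposition \ref{prop:Neumann_counting function_upper_bound} with a free $n$, use Weyl's law for the cube to send $n(\mu_{n+1}^*)^{-d/2}\to W_d^{-d/2}$, replace the remainder by that of a ball of radius $\sim f(k)$ via the monotonicity of $r_n$, and show the hypothesis $f(k)=o(k^{1/(d(d-1))})$ makes every non-leading term vanish after dividing by $k$. The only differences are cosmetic — you fix $n$ to bring the leading coefficient within $\epsilon$ of $W_d^{-d/2}$ at the outset and let $\epsilon\downarrow 0$ last, and you argue along a subsequence realising the liminf rather than through the paper's self-referential inequality in $m_k$ — both variants are sound and rest on the same ingredients.
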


\begin{proof}
Let $k,n\in \mathbb{N}$ and $\epsilon > 0$ be fixed, and let $\Omega\in \mathcal{O}_{d}$ with $|\Omega|\leq V$ and $\diam(\Omega) \leq f(k)$. From the bound in Proposition \ref{prop:Neumann_counting function_upper_bound}, using the monotonicity of the remainder, we see that \begin{equation}
    k \leq \mathcal{N}_{\Omega}^{N}(\mu_{k}(\Omega)+\epsilon) \leq \frac{n|\Omega|}{(\mu_{n+1}^{*})^{d/2}}(\mu_{k}(\Omega)+\epsilon)^{d/2} + r_{n}(B_{k};\mu_{k}(\Omega)+\epsilon),
\end{equation} where $B_{k}$ is the ball of diameter $2f(k)$. Since $\Omega$ was arbitrary, we see that \begin{equation}
    1 \leq k^{-1}\mathcal{N}_{\Omega}^{N}(m_{k}+\epsilon) \leq \frac{nV}{k(\mu_{n+1}^{*})^{d/2}}(m_{k}+\epsilon)^{d/2} + k^{-1}r_{n}(B_{k};m_{k}+\epsilon),
\end{equation} where \begin{equation}
    m_{k} = \inf \left\lbrace \mu_{k}(\Omega) : \Omega \in \O_{d}, \, |\Omega| \leq V, \, \mathrm{diam}(\Omega) \leq f(k) \right\rbrace.
\end{equation} Setting $\overline{m}_{k} = k^{-2/d}(m_{k}+\epsilon)$, writing out the bound we see that \begin{equation}
\begin{split}
   1 & \leq \frac{nV}{(\mu_{n+1}^{*})^{d/2}}(\overline{m}_{k})^{d/2} + C_{d,n} k^{-1/d}|\partial B_{k}|(\overline{m}_{k})^{(d-1)/2} \\
   & \qquad + \sum_{j=2}^{d-1} C_{d,n,j}' s_{j}(B_{k}) k^{-j/d}(\overline{m}_{k})^{(d-j)/2} + C_{d}'' k^{-1} \\
   & := p_{n,k}(\overline{m}_{k}),
   \end{split}
\end{equation} for some constants $C_{d,n},C_{d,n,j}',C_{d}''>0$ whose dependence is denoted in the subscript. By the scaling properties of quermassintegrals we see that $k^{-1/d}|\partial B_{k}|,k^{-j/d}s_{j}(B_{k}) \to 0$ as $k\to +\infty$. Hence for any $0<\delta<1$, for $k$ sufficiently large we have that \begin{equation}
    1 \leq p_{n,k}(\overline{m}_{k}) \leq \frac{nV}{(\mu_{n+1}^{*})^{d/2}}(\overline{m}_{k})^{d/2} + \delta \sum_{j=1}^{d} (\overline{m}_{k})^{(d-j)/2}.
\end{equation} Let $\gamma_{n,\delta}$ be the unique positive solution to \begin{equation}
    \frac{nV}{(\mu_{n+1}^{*})^{d/2}}(\gamma_{n,\delta})^{d/2} + \delta \sum_{j=1}^{d} (\gamma_{n,\delta})^{(d-j)/2} = 1,
\end{equation} then we immediately deduce that $\overline{m}_{k} \geq \gamma_{n,\delta}$. Since $\delta > 0$ was arbitrary, we see that \begin{equation}
    \overline{m}_{k} \geq \lim_{\delta \downarrow 0} \gamma_{n,\delta} = \left(\frac{(\mu_{n+1}^{*})^{d/2}}{nV}\right)^{2/d} = \frac{\mu_{n+1}^{*}}{n^{2/d}V^{2/d}}.
\end{equation} And so \begin{equation}
    \liminf_{k\to +\infty} k^{-2/d}m_{k} \geq \frac{\mu_{n+1}^{*}}{n^{2/d}V^{2/d}}
\end{equation} as $\epsilon > 0$ was arbitrary. Taking the limit as $n\to +\infty$ gives the result by Weyl's law.
\end{proof}

One can also prove the following result completely analagously to that of Proposition \ref{prop:Neu_lower_bound}.

\begin{prop}\label{prop:Neu_lower_bound_2d}
    For any $V >0 $ and any $f : \N \to \R_{>0}$ such that $c\leq f(k) = o(k^{1/2})$ as $k\to +\infty$ for some $c > 0$,
 \begin{equation}
        \liminf_{k\to +\infty} k^{-1} \Big[ \inf \left\lbrace \mu_{k}(\Omega) : \Omega \in \O_{2}, \, |\Omega| \leq V, \, |\partial \Omega| \leq f(k) \right\rbrace \Big] \geq 4\pi V^{-1}.
    \end{equation}
\end{prop}

In two dimensions, we are able to establish the existence of minimisers for all $k\geq 3$ to \eqref{prob:neu_perim} and \eqref{prob:neu_diam} from a simple adaptation of the methods in \cite[\S 2]{Cavallani2023}. We now give a variation of Theorem 2.4. in \cite{Cavallani2023} which is sufficient for our purposes.

\begin{lemma}\label{lem:2d_existence_lemma}
    When $d=2$: \begin{itemize}
        \item[(i)] there exists a minimiser to \eqref{prob:neu_perim} if and only if there exists a domain $\Omega \in \mathcal{O}_{2}$ with $|\partial \Omega| = 1$ such that \begin{equation}
            \mu_{k}(\Omega) \leq 4\pi^{2}(k-1)^{2};
        \end{equation}
        \item[(ii)] there exists a minimiser to \eqref{prob:neu_diam} if and only if there exists a domain $\Omega \in \mathcal{O}_{2}$ with $\diam(\Omega) = 1$ such that \begin{equation}
            \mu_{k}(\Omega) \leq \pi^{2}(k-1)^{2}.
            \end{equation}
    \end{itemize}
\end{lemma}

\begin{proof}
    The proof of both (i) and (ii) can be deduced immediately following the lines of the proof of Theorem 2.4. in \cite{Cavallani2023}, noting that in the case (i) if a sequence of unit perimeter planar convex sets collapses it necessarily must collapse to a line segment of length $1/2$ and in case (ii) if a sequence of unit diameter planar convex sets collapses it necessarily must collapse to a line segment of length $1$.
\end{proof}

The proofs of Theorems \ref{thm:neumann_perim}, \ref{thm:neumann_diam} and \ref{thm:neumann_per_diam} now immediately follow from the proof of Propositions \ref{prop:Neu_lower_bound} and \ref{prop:Neu_lower_bound_2d}.

\begin{proof}[{Proof of Theorem \ref{thm:neumann_perim}}]
In light of Lemma \ref{lem:2d_existence_lemma}(i), it suffices to show that there exists a domain $\Omega \in \mathcal{O}_{2}$ such that $\mu_{k}(\Omega) \leq 4\pi^{2}(k-1)^{2}$ for all $k\geq 3$ for the existence part of Theorem \ref{thm:neumann_perim}. The Neumann eigenvalues of the square $Q$ unit perimeter are well-known to be given by $16\pi^{2}\left[(i-1)^{2}+(j-1)^{2}\right]$, $i,j\in \mathbb{N}$, and moreover it is known that the Neumann eigenvalues of $Q$ satisfy the Pólya inequality $\mu_{k}(Q) \leq 64\pi (k-1)$. As the inequality $64\pi (k-1) \leq 4\pi^{2}(k-1)^{2}$ holds for $k\geq 7$, we know that there exists a minimiser to \eqref{prob:neu_perim} when $d=2$ for all $k\geq 7$. We do the cases $k=2,\ldots,6$ explicitly. The first six Neumann eigenvalues of $Q$, counting multiplicities, are given in ascending order by $0$, $16\pi^{2}$, $16\pi^{2}$, $32\pi^{2}$, $64\pi^{2}$ and $64\pi^{2}$. By direct comparison using Lemma \ref{lem:2d_existence_lemma}(i) we see that we must have existence of a minimiser for all $k\geq 3$ but not when $k=2$.

Now we know the existence of minimisers $\Omega_{k}^{*}$ to \eqref{prob:neu_perim}, when $d=2$, we need to prove the asymptotic behaviour of any sequence $(\Omega_{k}^{*})_{k\geq 1}$ of minimisers as $k\to +\infty$. Let $f(k)=1$ and take $V > 0$ to be the volume of the two-dimensional ball of unit perimeter, which we denote by $B$. Using Weyl's law and Proposition \ref{prop:Neu_lower_bound_2d}, we see that for any $\epsilon>0$ for $k$ sufficiently large \begin{equation}
        \mu_{k}(B) < \inf \left\lbrace \mu_{k}(\Omega) : \Omega \in \O_{2}, \, |\Omega| \leq V-\epsilon, \, |\partial \Omega| = 1 \right\rbrace.
    \end{equation} Hence, one must have that $|\Omega_{k}^{*}| > V-\epsilon$ for $k$ sufficiently large. Since $\epsilon > 0$ was arbitrary, we see that $|\Omega_{k}^{*}| \to V$ as $k\to +\infty$. Using Bonnesen's quantitative isoperimetric inequality, \cite{Bonnesen-1924,Kritikos-1927}, one can deduce that the $\partial\Omega_{k}^{*}$ eventually lie, up to rigid planar motions, inside the annulus \begin{equation}
        (\partial B)_{\delta} := \lbrace x \in \mathbb{R}^{2} : d(x,\partial B) \leq \delta \rbrace
    \end{equation} for any $\delta>0$ for $k$ sufficiently large. Hence, the $\Omega_{k}^{*}$ Hausdorff converge, up to possible rigid planar motions, to $B$ as $k\to +\infty$, which completes the proof.

\end{proof}

\begin{proof}[{Proof of Theorem \ref{thm:neumann_diam}}]

In dimension two, the proof of existence of minimisers for all $k\geq 3$ and not for $k=2$ follows completely analogously as in the proof of Theorem \ref{thm:neumann_diam}. In higher dimensions, we cannot use the same trick as in two-dimensions as collapsing sequences of convex domains of unit diameter do not necessarily converge to a line segment. Instead we show that minimisers must eventually exist from the asymptotic result in Proposition \ref{prop:Neu_lower_bound}.

Let $f(k)=1$ and take $V > 0$ to be the volume of the $d$-dimensional ball of unit diameter, which we denote by $B$. Using Weyl's law and Proposition \ref{prop:Neu_lower_bound}, we see that for any $\epsilon>0$ for $k$ sufficiently large \begin{equation} \label{eq:ball_comparison}
    \mu_{k}(B) < \inf \left\lbrace \mu_{k}(\Omega) : \Omega \in \O_{d}, \, |\Omega| \leq V - \epsilon, \, \mathrm{diam}(\Omega) \leq 1 \right\rbrace.
\end{equation} Hence, \begin{equation}\begin{split}\label{eq:diam_compactness}
   &  \inf \left\lbrace \mu_{k}(\Omega) : \Omega \in \O_{d}, \, \mathrm{diam}(\Omega)
     = 1 \right\rbrace \\ & \qquad \qquad  = \inf \left\lbrace \mu_{k}(\Omega) : \Omega \in \O_{d}, \, |\Omega| \geq V -\epsilon, \, \mathrm{diam}(\Omega) = 1 \right\rbrace.
    \end{split} 
\end{equation} By the continuity of volume and diameter in the Hausdorff topology and Blaschke's selection theorem \cite[Thm 6.3.]{Gruber-2007}\footnote{On a technical note, Blaschke's selection theorem is stated for compact convex sets with possibly zero volume. The convex domains in \eqref{eq:diam_compactness} have their inradii uniformly bounded from below and hence we avoid having this possibility.}, infimum on the RHS of \eqref{eq:diam_compactness} is taken over a set which is sequentially compact. Moreover, Neumann eigenvalues are continuous with respect to Hausdorff convergence of convex domains, see for example \cite{Ross-2004}, and so a simple application of the extreme value theorem shows that a minimiser must necessarily exist for $k$ sufficiently large.

As $\epsilon > 0$ was arbitrary in \eqref{eq:ball_comparison}, it is clear that one necessarily have that for any sequence $(\Omega_{k}^{*})_{k}$ of minimisers that $|\Omega_{k}^{*}|\to V$ as $k\to +\infty$. Using the quantitative isodiametric inequality \cite[Thm. 1]{Maggi-Ponsiglione-Pratelly-2014}, one can deduce that the $\Omega_{k}^{*}$ necessarily Hausdorff converge, up to possible rigid planar motions, to $B$ as $k\to +\infty$, which completes the proof. 

One can argue alternatively by contradiction by supposing that a subsequence of $\Omega_{k}^{*}$, also denoted $\Omega_{k}^{*}$, does not converge to the ball $B$ and then one would have that $\limsup_{k\to +\infty}|\Omega_{k}^{*}| < V$. But then by Proposition \ref{prop:Neu_lower_bound} \begin{equation}
        \liminf_{k\to +\infty} \frac{\mu_{k}(\Omega_{k}^{*})}{k^{2/d}} \geq \frac{W_{d}}{\displaystyle \left(\limsup_{k\to+\infty} |\Omega_{k}^{*}|\right)^{2/d}} \geq \frac{W_{d}}{V^{2/d}} = \lim_{k\to +\infty} \frac{\mu_{k}(B)}{k^{2/d}},
    \end{equation} which contradicts the optimality of the $\Omega_{k}^{*}$.
\end{proof}

Following the same lines of argument of the proof of Theorem \ref{thm:neumann_diam}, and using the quantitative isoperimetric inequality results due to Fuglede in \cite{Fuglede-1989}, one can prove Theorem \ref{thm:neumann_per_diam} in the same way.

\section{Geometric stability of Weyl's law}\label{sec:other_applications}

In this section, we discuss further applications of the bounds obtained in Section \ref{sec:counting_bounds} to stability results concerning Weyl's law, see Theorem \ref{thm:generalised_Weyl_law}. These results will become of use when one wants to obtain asymptotic shape optimisation results for mixed Dirichlet-Neumann eigenvalues in Section \ref{sec:zaremba}.

Recall again that $\mathcal{O}_{d}$ denotes the collection of bounded convex domains endowed with the Hausdorff topology induced by the metric given by \eqref{eq:Hausdorff_metric} and $W_{d}$ denotes the Weyl constant from \eqref{eq:Weyl_law_orig}.

Firstly, we can carry out similar reasoning to that in the last section to yield asymptotic upper bounds for Dirichlet eigenvalues.

\begin{prop}\label{prop:dir_upper_bound} For any $V >0 $ and any $f : \N \to \R_{>0}$ such that $f(k) = o(k^{1/d})$ as $k\to +\infty$,
 \begin{equation}
        \limsup_{k\to +\infty} k^{-2/d} \Big[ \sup \left\lbrace \lambda_{k}(\Omega) : \Omega \in \O_{d}, \, |\Omega| \geq V, \, |\partial \Omega| \leq f(k) \right\rbrace \Big] \leq W_{d}V^{-2/d}
    \end{equation} as $k\to +\infty$, provided that the set \begin{equation}
        \left\lbrace \lambda_{k}(\Omega) : \Omega \in \O_{d}, \, |\Omega| \geq V, \, |\partial \Omega| \leq f(k) \right\rbrace
    \end{equation} remains non-empty.
\end{prop}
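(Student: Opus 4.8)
The plan is to mirror the proof of Proposition \ref{prop:Neu_lower_bound}, but with the roles reversed: instead of turning an upper bound on a Neumann counting function into a lower bound on Neumann eigenvalues, I would turn the lower bound on the Dirichlet counting function from Proposition \ref{prop:Dir_counting_lower} into an upper bound on Dirichlet eigenvalues. The elementary mechanism is that $\mathcal{N}_{\Omega}^{D}(\alpha)\geq k$ implies $\lambda_{k}(\Omega)<\alpha$, so everything reduces to exhibiting, for each large $k$, a single threshold $\alpha_{k}$ of size $\asymp k^{2/d}$ at which the lower bound of Proposition \ref{prop:Dir_counting_lower} already exceeds $k$, uniformly over the admissible class.

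Concretely, I would fix $n\in\N$ and $\epsilon>0$, set $A=A_{n,\epsilon}:=(\lambda_{n}^{*}+n^{-1})(nV)^{-2/d}+\epsilon$, and take $\alpha_{k}:=A\,k^{2/d}$. For any admissible $\Omega$ one has $|\Omega|\geq V$ and $|\partial\Omega|\leq f(k)$; since in Proposition \ref{prop:Dir_counting_lower} the volume term carries a positive coefficient while the surface term enters with a minus sign, inserting these extremal values yields
\[
\mathcal{N}_{\Omega}^{D}(\alpha_{k})\;\geq\;\frac{nV}{(\lambda_{n}^{*}+n^{-1})^{d/2}}\,\alpha_{k}^{d/2}\;-\;\frac{2n d^{1/2} f(k)}{(\lambda_{n}^{*}+n^{-1})^{(d-1)/2}}\,\alpha_{k}^{(d-1)/2}.
\]
Substituting $\alpha_{k}=A k^{2/d}$, the first summand equals $(1+\eta)k$ for some $\eta=\eta(n,\epsilon)>0$ by the choice of $A$, while the second summand is a constant depending only on $n,\epsilon$ times $f(k)\,k^{(d-1)/d}$, which is $o(k)$ precisely because $f(k)=o(k^{1/d})$. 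Hence there is $K=K(n,\epsilon)$ so that $\mathcal{N}_{\Omega}^{D}(\alpha_{k})\geq k$, and therefore $\lambda_{k}(\Omega)<\alpha_{k}=A k^{2/d}$, for all $k\geq K$ and all admissible $\Omega$ simultaneously.

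Taking the supremum over admissible $\Omega$ (meaningful by the non-emptiness hypothesis) and then the limit superior in $k$ gives
\[
\limsup_{k\to+\infty}k^{-2/d}\,\sup\bigl\{\lambda_{k}(\Omega):\Omega\in\O_{d},\ |\Omega|\geq V,\ |\partial\Omega|\leq f(k)\bigr\}\;\leq\;\frac{\lambda_{n}^{*}+n^{-1}}{(nV)^{2/d}}+\epsilon.
\]
Letting $\epsilon\downarrow 0$ leaves $(\lambda_{n}^{*}+n^{-1})\,n^{-2/d}V^{-2/d}$ on the right, and then letting $n\to+\infty$ finishes the proof: by Weyl's law applied to the unit cube, $\lambda_{n}^{*}\,n^{-2/d}\to W_{d}$ and $n^{-1-2/d}\to 0$, so the bound converges to $W_{d}V^{-2/d}$.

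I do not expect a serious analytic obstacle here, since this is essentially a transcription of Proposition \ref{prop:Neu_lower_bound}; indeed it is somewhat simpler, as Proposition \ref{prop:Dir_counting_lower} has only a single surface-area correction term and no quermassintegrals to control, which is exactly why the weaker hypothesis $f(k)=o(k^{1/d})$ suffices (rather than $f(k)=o(k^{1/d(d-1)})$). The two points deserving care are that the inequality of Proposition \ref{prop:Dir_counting_lower} is applied pointwise at the single value $\alpha_{k}$ — so no monotonicity in $\alpha$ is needed, and the favourable sign of the surface remainder is precisely what makes the uniform substitution of $|\Omega|=V$, $|\partial\Omega|=f(k)$ legitimate — and that the dependence of $K$ on $n$ and $\epsilon$ must be tracked so that the iterated limits $\epsilon\downarrow 0$ then $n\to+\infty$ are taken in the correct order.
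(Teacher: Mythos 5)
Your proof is correct, and it takes a genuinely different (and arguably cleaner) route than the paper's.

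The paper evaluates the counting bound of Proposition \ref{prop:Dir_counting_lower} at the \emph{unknown} value $\alpha=\lambda_{k}(\Omega)$, starting from $k\geq \mathcal{N}_{\Omega}^{D}(\lambda_{k}(\Omega))$, and then rearranges to obtain an implicit inequality for $M_{k}:=\sup\lambda_{k}(\Omega)$. Because the error term involves $(M_{k}/k^{2/d})^{(d-1)/2}$, which is not \emph{a priori} bounded, the paper must run a bootstrap: first take $n=1$ to deduce the crude bound $M_{k}\leq Ck^{2/d}$, and only then, with that constant $C$ in hand, return to the inequality for general $n$ and pass to the $\limsup$. You instead evaluate the counting bound at the \emph{predetermined} threshold $\alpha_{k}=A_{n,\epsilon}k^{2/d}$ and show directly that $\mathcal{N}_{\Omega}^{D}(\alpha_{k})\geq k$ for all $k$ large, uniformly over admissible $\Omega$, hence $\lambda_{k}(\Omega)<\alpha_{k}$; this sidesteps the bootstrap entirely because the error term is evaluated at a known $O(k^{2/d})$ quantity rather than at $\lambda_{k}(\Omega)$ itself. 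Both routes exploit the same two facts — the lower bound from Proposition \ref{prop:Dir_counting_lower} and the sign of its surface term, which makes the substitution $|\Omega|\to V$, $|\partial\Omega|\to f(k)$ go the right way — and both end by sending $n\to\infty$ via Weyl's law for the cube. The one small thing worth noting in the paper's version, which your version avoids, is that passing from the pointwise inequality over $\Omega$ to the inequality at $M_{k}$ relies on continuity of the polynomial in $\lambda_{k}(\Omega)$ rather than on any monotonicity, since the supremum need not be attained; your threshold argument has no such subtlety.
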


\begin{proof}
    For an arbitrary $\Omega \in \O_{d}$ with $|\Omega| \geq V$ and $|\partial \Omega| \leq f(k)$, observe that \begin{equation}
    \begin{split}
        k \geq \mathcal{N}_{\Omega}^{D}(\lambda_{k}(\Omega)) & \geq \frac{n|\Omega|}{(\lambda_{n}^{*}+n^{-1})^{d/2}} \lambda_{k}(\Omega)^{d/2} - \frac{2nd^{1/2}|\partial \Omega|}{(\lambda_{n}^{*}+n^{-1})^{(d-1)/2}}\lambda_{k}(\Omega)^{(d-1)/2} \\
        & \geq \frac{nV}{(\lambda_{n}^{*}+n^{-1})^{d/2}} \lambda_{k}(\Omega)^{d/2} - \frac{2nd^{1/2}f(k)}{(\lambda_{n}^{*}+n^{-1})^{(d-1)/2}}\lambda_{k}(\Omega)^{(d-1)/2},
    \end{split}
    \end{equation} using the bound from Proposition \ref{prop:Dir_counting_lower}. Since $\Omega$ was chosen arbitrarily, we have that \begin{equation}\label{eq:dir_up_bound_line}
        1 \geq  \frac{nV}{(\lambda_{n}^{*}+n^{-1})^{d/2}} \left(\frac{M_{k}}{k^{2/d}}\right)^{d/2} - \frac{2nd^{1/2}f(k)}{k^{1/d}(\lambda_{n}^{*}+n^{-1})^{(d-1)/2}}\left(\frac{M_{k}}{k^{2/d}}\right)^{(d-1)/2},
    \end{equation} where \begin{equation}
        M_{k} = \sup \left\lbrace \lambda_{k}(\Omega) : \Omega \in \O_{d}, \, |\Omega| \geq V, \, |\partial \Omega| \leq f(k) \right\rbrace.
    \end{equation} Since $f(k) = o(k^{1/d})$, we see that $k^{-1/d}f(k)$ is a bounded sequence and so taking $n=1$, we have that there exist constants $C_{1},C_{2}>0$ such that \begin{equation}
        C_{1}\left(\frac{M_{k}}{k^{2/d}}\right)^{d/2}-C_{2}\left(\frac{M_{k}}{k^{2/d}}\right)^{(d-1)/2} - 1 \leq 0.
    \end{equation} From which it immediately follows there exists a constant $C>0$ such that \begin{equation}
        M_{k} \leq C k^{2/d}.
    \end{equation} Now in view of equation \eqref{eq:dir_up_bound_line}, we have \begin{equation}
        1 \geq  \frac{nV}{(\lambda_{n}^{*}+n^{-1})^{d/2}} \left(\frac{M_{k}}{k^{2/d}}\right)^{d/2} - \frac{2nd^{1/2}f(k)}{k^{1/d}(\lambda_{n}^{*}+n^{-1})^{(d-1)/2}}C^{(d-1)/2},
    \end{equation}
Taking the limsup as $k\to +\infty$ and rearranging yields that \begin{equation}
        \limsup_{k\to+\infty} k^{-2/d}M_{k} \leq \frac{\lambda_{n}^{*}+n^{-1}}{n^{2/d}V^{2/d}}. 
    \end{equation} Since $n\in \N$ was arbitrary we see that \begin{equation}
        \limsup_{k\to+\infty} k^{-2/d}M_{k} \leq W_{d}V^{-2/d}
    \end{equation} using Weyl's law, which completes the proof.
\end{proof}

As a direct consequence of Propositions \ref{prop:Neu_lower_bound} and \ref{prop:dir_upper_bound} one can deduce the following `generalisation' of Weyl's law for bounded convex domains.
\begin{thm}\label{thm:generalised_Weyl_law}
    Let $\Omega_{k}\subset \mathbb{R}^{d}$ be a sequence of bounded convex domains of volume $V > 0$ and $\diam(\Omega_{k}) = o(k^{1/(d(d-1))})$, then \begin{equation}\label{eq:Weyl_law}
        \lambda_{k}(\Omega_{k}) \sim \mu_{k}(\Omega_{k}) \sim 4\pi^{2} \left(\frac{k}{\omega_{d} V}\right)^{2/d}
    \end{equation} as $k\to +\infty$.
\end{thm}

\begin{proof}
    Noting that the condition $\diam(\Omega_{k}) = o(k^{1/(d(d-1))})$ implies that $|\partial \Omega_{k}|=o(k^{1/d})$ and that by classical variational arguments $\mu_{k}(\Omega_{k}) \leq \lambda_{k}(\Omega_{k})$, combining a simple application of Propositions \ref{prop:Neu_lower_bound} and \ref{prop:dir_upper_bound} gives the result.
\end{proof}

The condition $\diam(\Omega_{k}) = o(k^{1/(d(d-1))})$ is sharp in the sense that one can construct sequences of domains with $\diam(\Omega_{k}) = O(k^{1/(d(d-1))})$ for which \eqref{eq:Weyl_law} does not hold. For example, in two dimensions one can consider the sequence of domains $\Omega_{k} = (0,(4k)^{1/2}) \times (0,(4k)^{-1/2})$. The philosophy of Theorem \ref{thm:generalised_Weyl_law} is that if we do allow the geometry of the $\Omega_{k}$ to degenerate too quickly as $k\to +\infty$ then the leading Weyl term will dominate against any remainder. This, at least heuristically, explains why Theorem \ref{thm:neumann_per_diam} holds true. We will use Theorem \ref{thm:generalised_Weyl_law} in the proof of Theorem \ref{thm:Zaremba_surface_measure} in the next section to illustrate how it can be used.

\section{Mixed boundary conditions}\label{sec:zaremba}

As discussed in the introduction, the minimisation problem \eqref{prob:dir_perim} for Dirichlet eigenvalues under perimeter constraint is well-posed but the minimisation problem \eqref{prob:neu_perim} for Neumann eigenvalues under perimeter constraint is ill-posed. Here we consider a non-trivial minimisation problem for eigenvalues of the Laplacian under mixed Dirichlet-Neumann, so-called Zaremba, boundary conditions under perimeter constraint which is well-posed and has the same asymptotic behaviour as in Theorem \ref{thm:Dirichlet_perim}.

To do this we define a subcollection $\mathcal{O}_{d,L}$ of $\mathcal{O}_{d}$ such that for each domain in the collection there is a canonical way of prescribing the mixed boundary conditions and the minimisation problem itself is well-posed. The definition of this subcollection is subtle and may appear somewhat odd at first but it allows us to obtain uniform lower bounds on the Zaremba eigenvalues and deduce the continuity of the Zaremba eigenvalues over $\mathcal{O}_{d,L}$. Without further ado, we give the definition of this subcollection below.

Let $\wp$ be the canonical projection $\mathbb{R}^{d} \to \mathbb{R}^{d-1}$ which omits the final coordinate. $\wp$ will denote this projection throughout the rest of the paper. Given $\Omega \in \mathcal{O}_{d}$, its image under $\wp$, denoted $\wp(\Omega)$, is a convex domain in $\mathbb{R}^{d-1}$. For each $x' \in \wp(\Omega)$ we can define two functions $h^{+},h^{-}:\wp(\Omega) \to \mathbb{R}$ by \begin{equation}
    h^{+}(x') = \sup\lbrace y \in \mathbb{R} : (x',y) \in \Omega \rbrace, \quad h^{-}(x') = \inf\lbrace y \in \mathbb{R} : (x',y) \in \Omega \rbrace.
\end{equation} We call $h^{+}$ and $h^{-}$ the upper and lower profiles of $\Omega$ and as functions they are concave and convex respectively. These functions are well defined as any line passing through a convex domain intersects the boundary precisely twice. Given $L>0$, we say that $\Omega$ is a convex $L$-Lip domain if $h^{+}$ and $h^{-}$ are both $L$-Lipschitz and agree on the boundary of $\wp(\Omega)$, denoted $\partial \wp(\Omega)$. We denote the collection of all convex $L$-Lip domains in $\mathbb{R}^{d}$ by $\mathcal{O}_{d,L}$. We define the upper boundary of $\Omega$ by $\Gamma^{+}:= \Gamma^{+}(\Omega) := \lbrace (x',h^{+}(x')) : x' \in \wp(\Omega) \rbrace \subset \partial \Omega$ and define the lower boundary $\Gamma^{-} := \Gamma^{-}(\Omega)$ analogously.

Let $\Omega \in \O_{d,L}$. We define the Zaremba Sobolev space $\mathcal{H}_{0,\Gamma^{-}}^{1}(\Omega)$ as the completion of the space \begin{equation}
    C_{0,\Gamma^{-}}^{\infty}(\Omega) = \lbrace  \left.\phi \right|_{\Omega} \in C^{\infty}(\Omega): \phi \in C_{0}^{\infty}(\mathbb{R}^{d}), \,  d(\mathrm{supp}(\phi), \Gamma^{-}) > 0\rbrace
\end{equation} in the Sobolev norm \begin{equation}
    \lVert u\rVert_{\mathcal{H}^{1}} := \left(\int_{\Omega} |\nabla u|^{2}  + \int_{\Omega}u^{2}\right)^{1/2}.
\end{equation} Then, in the usual way, we define the Zaremba Laplacian $-\Delta_{\Omega}^{Z}$ on $\L^{2}(\Omega)$ via the Friedrich's extension with domain \begin{equation}
    \mathrm{dom}(-\Delta_{\Omega}^{Z}) = \left\lbrace u \in \mathcal{H}_{0,\Gamma^{-}}^{1}(\Omega) : \Delta u \in \L^{2}(\Omega), \, \left.\partial_{n}u\right|_{\Gamma^{+}} = 0 \right\rbrace,
\end{equation} where the conditions in the definition of $\mathrm{dom}(-\Delta_{\Omega}^{Z})$ are understood in the distributional sense. The Zaremba Laplacian $-\Delta_{\Omega}^{Z}$ has the associated non-negative symmetric quadratic form \begin{equation}
    Q(u,v) = \int_{\Omega} \nabla u \cdot \nabla v
\end{equation} with domain $\mathrm{dom}(Q) = \mathcal{H}_{0,\Gamma^{-}}^{1}(\Omega)$ and one can deduce that $-\Delta_{\Omega}^{Z}$ has a discrete collection of positive eigenvalues accumulating only at $+\infty$, which we denote \begin{equation}
    0 < \zeta_{1}(\Omega) < \zeta_{2}(\Omega) \leq \zeta_{3}(\Omega) \leq \cdots \uparrow +\infty,
\end{equation} that have the variational characterisation \begin{equation}\label{eq:zar_var_char}
     \zeta_{k}(\Omega) = \min_{\substack{S \subseteq \mathcal{H}_{0,\Gamma^{-}}^{1}(\Omega) \\ \dim (S) = k}} \max_{\substack{u \in S \\ u\neq 0}} \frac{\int_{\Omega} |\nabla u|^{2}}{\int_{\Omega} u^{2}}.
\end{equation}

For our purposes we only need the definition of $\mathcal{H}_{0,\Gamma^{-}}^{1}(\Omega)$ and the variational characterisation given in \eqref{eq:zar_var_char}. For a fuller discussion on defining Zaremba eigenvalues we direct the reader to \cite[\S 2]{Lotoreichik-Rohleder-2017} and \cite[\S 3.1.3]{Levitin-Polterovich-Mangoubi-2023}, and the references therein.

Now that $\O_{d,L}$ has been defined and we have defined Zaremba eigenvalues on domains lying in $\mathcal{O}_{d,L}$ we are ready to state our main results.
\begin{thm}\label{thm:Zaremba_surface_measure}
    For any $d\geq 2$ and $L>0$, for all $k\geq 1$ there exists a minimiser $\Omega_{k}^{*}$ to the problem \begin{equation}\label{eq:Zaremba_perim_problem}
        \inf \lbrace \zeta_{k}(\Omega) : \Omega \in \mathcal{O}_{d,L}, \, |\partial \Omega| = 1\rbrace.
    \end{equation}
    Moreover, any sequence $\Omega_{k}^{*}$ of minimisers is non-degenerate, i.e. $\liminf_{k\to +\infty}|\Omega_{k}^{*}|>0$, and any accumulation point, up to possible rigid planar motions, of $\Omega_{k}^{*}$ is a solution to the isoperimetric problem over $\mathcal{O}_{d,L}$, which is necessarily symmetric, up to a translation, about the hyperplane $\lbrace x_{d} = 0\rbrace$.
\end{thm}

We note that for any $k\geq1$ and $d\geq 3$, \begin{equation}\label{eq:zar_inf_zero}
    \inf \left\lbrace \zeta_{k}(\Omega) : \Omega \in \bigcup_{L>0} \mathcal{O}_{d,L}, \, |\partial \Omega| = 1\right\rbrace = 0
\end{equation} and so without a uniform $L$-Lipschitz constraint the conclusion of Theorem \ref{thm:Zaremba_surface_measure} fails to hold. Moreover, for any $k\geq 1$ and $d\geq 3$,  \begin{equation}\label{eq:neu_inf_zero}
    \inf \left\lbrace \mu_{k}(\Omega) : \Omega \in  \mathcal{O}_{d,L}, \, |\partial \Omega| = 1\right\rbrace = 0
\end{equation} for all $k\in \N$ and so the Zaremba eigenvalues behave fundamentally differently to Neumann eigenvalues over the collection $\O_{d,L}$. 

We now briefly illustrate \eqref{eq:zar_inf_zero} and \eqref{eq:neu_inf_zero} as an example when $d=3$, the higher dimensional cases can be done similarly. 

\begin{ex}
    Let $0 < \epsilon < 1$ and set $R_{\epsilon} = (0,\epsilon) \times (0,\epsilon)$. Define $f_{L,\epsilon} : R_{\epsilon} \to \mathbb{R}$ by $f_{L,\epsilon}(x,y) = \min\lbrace L d((x,y), \partial R_{\epsilon}), \epsilon^{-1}\rbrace$ and let \begin{equation}
        \Omega_{L,\epsilon} := \lbrace (x,y,z) : (x,y)\in R_{\epsilon}, \, 0 < z < f_{L,\epsilon}(x,y) \rbrace,
    \end{equation} which lies in $\mathcal{O}_{3,L}$. Let $u_{j}(x,y,z)= \sin(\pi (j+1/2) \epsilon z)$ for $1 \leq j \leq k$, then we see that \begin{equation}
        \zeta_{k}(\Omega_{L,\epsilon}) \leq \max_{(\alpha_{1},\ldots,\alpha_{k}) \neq (0,\ldots,0)} \frac{\displaystyle \int_{\Omega_{L,\epsilon}} \left|\nabla \sum_{j=1}^{k} \alpha_{j} u_{j} \right|^{2}}{\displaystyle \int_{\Omega_{L,\epsilon}} \left| \sum_{j=1}^{k} \alpha_{j} u_{j} \right|^{2}} \to \pi^{2}(k+1/2)^{2}\epsilon^{2}
    \end{equation} as $L\to +\infty$. Moreover, $|\partial \Omega_{L,\epsilon}| \to 4+2\epsilon^{2}$ as $L\to +\infty$. Since $0 < \epsilon < 1$ was arbitrary, by the properties of Zaremba eigenvalues under scaling, we see that \eqref{eq:zar_inf_zero} indeed holds for any $k\geq 1$ when $d=3$.

    Now set $S_{\epsilon} := (0,\epsilon^{-1})\times (0,\epsilon)$ and set $g_{L,\epsilon}: S_{\epsilon} \to (0,+\infty)$ by $g_{L,\epsilon}(x,y) := L d((x,y),\partial S_{\epsilon})$ and define the domain $D_{\epsilon}$ by \begin{equation}
        D_{\epsilon} := \lbrace (x,y,z) : (x,y) \in R_{\epsilon}, \, -g_{L,\epsilon}(x,y) < z < g_{L,\epsilon}(x,y) \rbrace. 
    \end{equation} We have that $D_{\epsilon} \in \mathcal{O}_{3,L}$ and that $|\partial \Omega_{\epsilon}| = 2\sqrt{1+L^{2}}$ for all $0 < \epsilon < 1$. Letting $u_{j}(x,y,z) = \cos(\pi j \epsilon x)$ for $1\leq j \leq k$, one sees that \begin{equation}
    \begin{split}
        \mu_{k}(\Omega_{\epsilon}) & \leq \max_{\substack{v = \alpha_{1}u_{1} + \cdots +\alpha_{k}u_{k} \\ \lVert v\rVert_{\mathcal{L}^{2}} = 1}} \int_{\Omega_{\epsilon}} |\nabla v|^{2} \\
        &  = \max_{\substack{v = \alpha_{1}u_{1} + \cdots +\alpha_{k}u_{k} \\ \lVert v\rVert_{\mathcal{L}^{2}} = 1}} \int_{0}^{\epsilon^{-1}} dx \, \int_{0}^{\epsilon} dy \, \int_{-g_{\epsilon}(x,y)}^{g_{\epsilon}(x,y)} dz\, \left| \nabla \sum_{j} \alpha_{j} u_{j}(x,y,z)\right|^{2} \\
        & \leq L\epsilon^{2} \max_{\substack{v = \alpha_{1}u_{1} + \cdots +\alpha_{k}u_{k} \\ \lVert v\rVert_{\mathcal{L}^{2}} = 1}} \int_{0}^{\epsilon^{-1}} dx \, \left| \nabla \sum_{j} \alpha_{j} u_{j}(x,y,z)\right|^{2} \\
        & =  L\pi^{2}k^{2} \epsilon^{4}.
    \end{split}
    \end{equation} Since $0 < \epsilon < 1$ was arbitrary, by the scaling properties of Neumann eigenvalues under homothety, we see that \eqref{eq:neu_inf_zero} indeed holds for any $k\geq 1$ when $d=3$.
\end{ex}

In the same way as one proves Theorem \ref{thm:Zaremba_surface_measure}, one can also deduce the analogous result in the case of diameter constraint.

\begin{thm}\label{thm:Zaremba_diamater}
    For any $d\geq 2$ and $L>0$, for all $k\geq 1$ there exists a minimiser $\Omega_{k}^{*}$ to the problem \begin{equation}
        \inf \lbrace \zeta_{k}(\Omega) : \Omega \in \mathcal{O}_{d,L}, \, \mathrm{diam}(\Omega) = 1\rbrace.
    \end{equation}
    Moreover, any sequence $\Omega_{k}^{*}$ of minimisers is non-degenerate and any accumulation point, up to possible rigid planar motions, of $\Omega_{k}^{*}$ is a solution to the isodiametric problem over $\mathcal{O}_{d,L}$.
\end{thm}

To make our results clearer, let us illuminate Theorem \ref{thm:Zaremba_surface_measure} through an example in two dimensions.

\begin{ex}
    Fix $0< \delta \leq \frac{\pi}{4}$. Let $\Omega \subset \mathbb{R}^{2}$ be a kite of unit perimeter, let $\ell$ be the line of symmetry of $\Omega$ and assume that the angles that $\ell$ passes through are less than or equal to $\pi-2\delta$, see Figure \ref{fig:kite_example} for an example of this. The collection of such kites is closed in the Hausdorff metric. Partition the boundary of the kite into two disjoint relatively open components $\Gamma^{+}$ and $\Gamma^{-}$ which lie on either side of $\ell$ and, up to a set of measure zero, cover $\partial \Omega$. Then one can define the Zaremba Laplacian for kites in the way described earlier in this subsection. Then Theorem \ref{thm:Zaremba_surface_measure} gives that for sufficiently large $k$ there exists a minimiser $\Omega_{k}^{*}$ of the $k$-th Zaremba eigenvalue among such kites with unit perimeter, and the isoperimetric problem for kites implies that any sequence of such optimisers must converge to the square of unit perimeter as $k\to +\infty$.

    \begin{figure}
        \centering
        \begin{tikzpicture}
        \draw[gray] (-0.75, 0) node[above] {{\tiny $\geq \delta$}};
        \draw[thick,gray] (0.25,0.433) arc (60:90:0.5);
            \draw[thick,gray] (0,-0.5) arc (270:300:0.5);
            \draw[->,gray] (-0.5, 0.25) -- (-0.05, 0.25);
            \draw[->,gray] (-0.5, 0.2) -- (-0.05, -0.25);
        \draw[dashed] (-1,0) -- (5,0) node[right] {$\ell$};
            \draw[red,thick] (0,0) -- (1,1.732) -- (4,0);
            \draw (2.5,1.5) node {$\Gamma^{+}$};
            \draw (2.5,-1.5) node {$\Gamma^{-}$};
            \draw[blue,thick] (0,0) -- (1,-1.732) -- (4,0);
            
            \draw[dashed] (0,-1) -- (0,1);
            
        \end{tikzpicture}
        \caption{An example of symmetric Zaremba boundary conditions on a kite about its axis of symmetry, with Dirchlet boundary conditions denoted in blue and Neumann boundary conditions denoted in red.}
        \label{fig:kite_example}
    \end{figure}
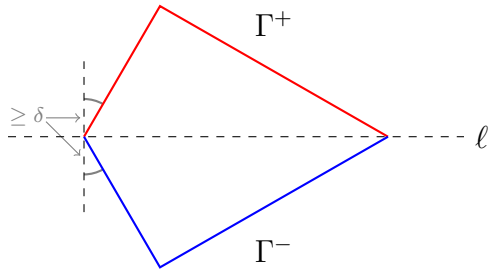

    As a corollary, one can carry out the same for rhombii where $\ell$ is the line of symmetry passing through the smallest opposite pair of interior angles. Then under perimeter constraint, again one has existence of optimisers for $k$ sufficiently large and that the optimisers necessarily converge to the square of unit perimeter as $k\to + \infty$.
\end{ex}

We now turn our attention to proving Theorems \ref{thm:Zaremba_surface_measure} and \ref{thm:Zaremba_diamater}. An easy but key observation to make is that $\mathcal{O}_{d,L}$ is closed under homothety. We begin the section by showing $\O_{d,L}$ is closed in the Hausdorff topology provided that one does not have degeneracy of the volume in the limit. Then we use the definition of $\O_{d,L}$ to prove the continuity of these Zaremba eigenvalues in the Hausdorff topology and then prove a Li-Yau type lower bound for these eigenvalues. Both the proofs of the continuity and the lower bound, require the use of Sobolev extension operators and the choice of definition of $\O_{d,L}$ will become more apparent throughout this section.

\subsection{Properties of $\mathcal{O}_{d,L}$}

\begin{lemma}\label{lem:Odl_closed}
    If $\Omega_{n} \in \mathcal{O}_{d,L}$ is a sequence of domains Hausdorff converging to $\Omega \in \O_{d}$ as $n\to+\infty$, then $\Omega \in \O_{d,L}$.
\end{lemma}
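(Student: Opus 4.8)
The plan is to show that the defining properties of $\mathcal{O}_{d,L}$ — namely that the upper and lower profiles $h^{\pm}$ are $L$-Lipschitz and agree on $\partial\wp(\Omega)$ — pass to the Hausdorff limit. First I would record that $\Omega\in\mathcal{O}_{d}$ is given, so $\Omega$ is a genuine convex domain and in particular $\wp(\Omega)$ is a convex domain in $\mathbb{R}^{d-1}$ with well-defined profiles $h^{+},h^{-}:\wp(\Omega)\to\mathbb{R}$. The key preliminary observation is that the projection $\wp$ is $1$-Lipschitz, hence $\wp(\Omega_{n})\to\wp(\Omega)$ in the Hausdorff metric as $n\to+\infty$ (this is a standard fact about Hausdorff convergence under Lipschitz maps, and can be cited from Appendix \ref{appendix:Hausdorff_convergence}). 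I would then argue that the profiles converge locally uniformly: fix a compact $K\subset\wp(\Omega)$; for $n$ large, $K\subset\wp(\Omega_{n})$, and on $K$ the concave functions $h^{+}_{\Omega_{n}}$ converge uniformly to $h^{+}_{\Omega}$ (and likewise for $h^{-}$), since Hausdorff convergence of the convex bodies $\Omega_{n}\to\Omega$ forces the epigraphs/hypographs of the profiles to converge, and locally uniform convergence of concave functions follows from pointwise convergence on a dense set together with the uniform Lipschitz bound.

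With local uniform convergence of $h^{\pm}_{\Omega_{n}}$ to $h^{\pm}_{\Omega}$ in hand, the $L$-Lipschitz property is immediate: for any $x',y'$ in a compact subset of $\wp(\Omega)$ we have $|h^{+}_{\Omega}(x')-h^{+}_{\Omega}(y')| = \lim_{n}|h^{+}_{\Omega_{n}}(x')-h^{+}_{\Omega_{n}}(y')| \leq L\,\|x'-y'\|$, and since compact subsets exhaust $\wp(\Omega)$ this gives $h^{+}_{\Omega}\in L$-$\mathrm{Lip}(\wp(\Omega))$; the same for $h^{-}_{\Omega}$. In particular both profiles extend continuously (in fact $L$-Lipschitzly) to $\overline{\wp(\Omega)}$. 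It then remains to check that these continuous extensions agree on $\partial\wp(\Omega)$. Here I would use that each $\Omega_{n}$ is a convex $L$-Lip domain, so $h^{+}_{\Omega_{n}}=h^{-}_{\Omega_{n}}$ on $\partial\wp(\Omega_{n})$; combined with the uniform Lipschitz bound this says each $\Omega_{n}$ is, up to its two profiles being equal on the boundary, "pinched" along $\partial\wp(\Omega_{n})$. Taking limits, for a point $z'\in\partial\wp(\Omega)$ one approximates $z'$ by points $z'_{n}\in\partial\wp(\Omega_{n})$ (possible since $\wp(\Omega_{n})\to\wp(\Omega)$ in Hausdorff distance, so their boundaries converge as well), uses $|h^{\pm}_{\Omega_{n}}(z'_{n})-h^{\pm}_{\Omega}(z')|\to 0$ by the uniform Lipschitz estimate plus local uniform convergence, and concludes $h^{+}_{\Omega}(z')=\lim_{n}h^{+}_{\Omega_{n}}(z'_{n})=\lim_{n}h^{-}_{\Omega_{n}}(z'_{n})=h^{-}_{\Omega}(z')$.

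The main obstacle I anticipate is the boundary bookkeeping in the last step: one must be careful that $z'_{n}\to z'$ can indeed be arranged with $z'_{n}\in\partial\wp(\Omega_{n})$, and that the profiles $h^{\pm}_{\Omega_{n}}$, a priori only defined on the open set $\wp(\Omega_{n})$, are controlled near $z'_{n}$ — this is where the hypothesis $\Omega\in\mathcal{O}_{d}$ (so that $\Omega$ is nondegenerate, i.e. has nonempty interior, equivalently $\wp(\Omega)$ is a genuine $(d-1)$-dimensional convex domain) is essential, as it rules out the Hausdorff limit collapsing to a lower-dimensional set where the profile description breaks down. A secondary technical point is justifying the local uniform convergence of the concave profiles from Hausdorff convergence of the bodies; this is classical (convergence of convex functions / convex bodies, cf. the standard references on Hausdorff convergence in Appendix \ref{appendix:Hausdorff_convergence}) but should be stated cleanly. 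Once these two points are dispatched, the three defining conditions of $\mathcal{O}_{d,L}$ for $\Omega$ follow directly, completing the proof.
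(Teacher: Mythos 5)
Your proposal is correct and follows essentially the same route as the paper's proof: both pass the $L$-Lipschitz bound on the profiles $h^{\pm}$ to the Hausdorff limit by approximating points of $\wp(\Omega)$ with nearby points where the Lipschitz estimate is known, and both then verify that the limit profiles agree on $\partial\wp(\Omega)$. The only difference is organisational: you route through local uniform convergence of the concave profiles as an intermediate step, whereas the paper first reduces to the case $\Omega_{n}\subset\Omega$ via homothety (Proposition \ref{prop:Hausdorff_inclusion}) and then runs a direct triangle inequality against $d_{H}(\partial\Omega_{n},\partial\Omega)$ using Proposition \ref{prop:Hausdorff_boundary} -- the two formulations amount to the same estimate.
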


\begin{proof}
    By the invariance of $\mathcal{O}_{d,L}$ under homothety and translations and standard properties of Hausdorff convergence of convex domains, it suffices to prove the result in the case of sequences $\Omega_{n}$ that lie in $\mathcal{O}_{d,L}$ which Hausdorff converge to some bounded convex domain $\Omega$ as $n\to +\infty$ and for which $\Omega_{n} \subset \Omega$ for each $n$. Let $h_{n}^{+}: \wp(\Omega_{n}) \to \mathbb{R}$ be the upper height function of $\Omega_{n}$ and $h^{+}$ the upper height function of $\Omega$. Now fix $x',y'\in \wp(\Omega)$ and let $\epsilon = \frac{1}{2}\min\lbrace d(x',\partial \wp(\Omega)),d(y',\partial \wp(\Omega))\rbrace$. Then it is easy to see that $\wp(\Omega_{n})$ Hausdorff converges to $\wp(\Omega)$ as $n\to +\infty$ so we have that $B(x',\epsilon),B(y',\epsilon)\subset \wp(\Omega_{n})$ for $n$ sufficiently large. Now also for $n$ sufficiently large we see that $d(\partial \Omega, \partial \Omega_{n}) < \epsilon$ by standard results of Hausdorff convergence of convex domains and so, as $\Omega_{n} \subset \Omega$, it is clear that there exist sequences $(x_{n}',h_{n}^{+}(x_{n}')),(y_{n}',h_{n}^{+}(y_{n}')) \in \partial \Omega_{n}$ converging to $(x',h^{+}(x')) (y',h^{+}(y'))\in \partial \Omega$ as $n\to +\infty$. In particular, we see that these sequences can be chosen so that \begin{equation}
        \lVert (x_{n}',h_{n}^{+}(x_{n}'))-(x',h^{+}(x'))\rVert_{2}, \lVert (y_{n}',h_{n}^{+}(y_{n}'))-(y',h^{+}(y'))\rVert_{2} \leq d_{H}(\partial \Omega_{n},\partial \Omega).
    \end{equation} Then \begin{equation}
    \begin{split}
        |h^{+}(x')-h^{+}(y')| & \leq |h^{+}(x')-h_{n}^{+}(x_{n}')| + |h_{n}^{+}(x_{n}')-h_{n}^{+}(y_{n}')| + |h_{n}^{+}(y_{n}')-h^{+}(y')| \\
        & \leq 2d_{H}(\partial \Omega_{n},\partial\Omega) + L|x_{n}'-y_{n}'|.
    \end{split}
    \end{equation} Taking the limit as $n\to +\infty$ we see that $h^{+}$ is $L$-Lipschitz. Similarly one can show that $h^{-}$, the lower height function of $\Omega$, is $L$-Lipschitz. The fact that $h^{+}$ and $h^{-}$ agree on the boundary $\partial \wp(\Omega)$ is easy to argue by contradiction.
\end{proof}

\begin{lemma}\label{lem:Lip_boundary}
    If $\Omega_{n}$ is a sequence of domains in $\mathcal{O}_{d,L}$ Hausdorff converging to a domain $\Omega \in \O_{d,L}$ as $n\to +\infty$, then $\Gamma_{n}^{-}:=\Gamma^{-}(\Omega_{n})$ Hausdorff converges to $\Gamma^{-}:=\Gamma^{-}(\Omega)$ as $n\to+\infty$.
\end{lemma}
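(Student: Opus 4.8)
The plan is to reduce, as in the proof of Lemma \ref{lem:Odl_closed}, to the case where $\Omega_n \subset \Omega$ for every $n$, using invariance of $\mathcal{O}_{d,L}$ under homothety and translation together with Proposition \ref{prop:Hausdorff_inclusion}; this is legitimate since $\Gamma^{-}$ behaves naturally under rigid motions and scalings. First I would record that the Hausdorff convergence $\Omega_n \to \Omega$ forces $\wp(\Omega_n) \to \wp(\Omega)$ in the Hausdorff metric in $\R^{d-1}$ (the projection $\wp$ is $1$-Lipschitz, hence does not increase Hausdorff distance), and that the lower profiles $h_n^{-}$ converge to $h^{-}$ locally uniformly on $\wp(\Omega)$: on any compact $K \subset \wp(\Omega)$ one has $K \subset \wp(\Omega_n)$ for $n$ large, and an argument exactly like the one in Lemma \ref{lem:Odl_closed} (picking boundary points of $\Omega_n$ within $d_H(\partial\Omega_n,\partial\Omega)$ of the target point, controlling the vertical discrepancy by $2 d_H(\partial\Omega_n,\partial\Omega) + L|x_n' - x'|$, and invoking the uniform $L$-Lipschitz bound) gives $\sup_{K}|h_n^{-} - h^{-}| \to 0$.

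Next I would upgrade local uniform convergence of the profiles, together with Hausdorff convergence of the base domains $\wp(\Omega_n) \to \wp(\Omega)$, to Hausdorff convergence of the graphs $\Gamma_n^{-} \to \Gamma^{-}$. For the inclusion $\Gamma_n^{-} \subset \Gamma^{-} + \epsilon\mathbb{B}_d$: given a point $(x_n', h_n^{-}(x_n')) \in \Gamma_n^{-}$, since $\wp(\Omega_n) \subset \wp(\Omega) + \epsilon'\mathbb{B}_{d-1}$ there is $x' \in \wp(\Omega)$ with $|x_n' - x'| < \epsilon'$; then $|h_n^{-}(x_n') - h^{-}(x')|$ is controlled by the local uniform convergence on points near the boundary supplemented by the $L$-Lipschitz bound of $h^{-}$ and the $L$-Lipschitz bound of $h_n^{-}$ (i.e. $|h_n^{-}(x_n') - h^{-}(x')| \leq |h_n^{-}(x_n') - h^{-}(x_n')| + L|x_n' - x'|$, where the first term is small for $x_n'$ in the relevant compact core and, near $\partial\wp(\Omega)$, is handled by the fact that $h_n^{-}$ and $h^{-}$ both take boundary values equal to the respective upper profiles which are themselves close). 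The reverse inclusion $\Gamma^{-} \subset \Gamma_n^{-} + \epsilon\mathbb{B}_d$ is symmetric, using $\wp(\Omega) \subset \wp(\Omega_n) + \epsilon'\mathbb{B}_{d-1}$.

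The main obstacle is the behaviour near the boundary $\partial\wp(\Omega)$, where the convex $L$-Lip condition makes $h^{+}$ and $h^{-}$ coincide: the local uniform convergence of profiles degenerates there, so one cannot directly compare $h_n^{-}(x_n')$ with $h^{-}(x_n')$ at points near the edge. The way I would handle this is to split into two regimes via a parameter $\rho > 0$: on the compact set $\{x' \in \wp(\Omega) : d(x',\partial\wp(\Omega)) \geq \rho\}$ use genuine local uniform convergence, while on the collar $\{d(x',\partial\wp(\Omega)) < \rho\}$ use that \emph{both} $\Gamma_n^{-}$ and $\Gamma^{-}$ are pinched between the (close) upper and lower profiles over a thin strip, so the diameter of each collar piece is $O(\rho)$ by the uniform $L$-Lipschitz bounds, hence the collar pieces are automatically within $O(\rho + d_H(\partial\Omega_n,\partial\Omega))$ of each other and of $\partial\Omega$; choosing $\rho$ small and then $n$ large yields the claim. (Alternatively one may simply note that $\Gamma^{-}(\Omega)$ is a closed subset of $\partial\Omega$ cut out by a continuous condition and invoke Proposition \ref{prop:Hausdorff_boundary} on a suitable decomposition, but the collar argument is cleanest and self-contained.)
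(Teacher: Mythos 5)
Your proposal is correct and hits the essential difficulty (the degeneracy of profile convergence near $\partial\wp(\Omega)$, handled via the pinching $h^+ = h^-$ on the edge), but the paper implements the split rather differently and more tightly. You propose a two-regime argument parametrised by a collar width $\rho$: local uniform convergence of $h_n^{-}\to h^{-}$ on the compact core $\{d(x',\partial\wp(\Omega))\geq\rho\}$, and a separate estimate on the collar using the $L$-Lipschitz pinching. The paper instead introduces, for each $\delta>0$, the compact set $K_{\delta}=\{(x',y): h^{-}(x')+\delta\leq y\leq h^{+}(x')-\delta\}\subset\Omega$, notes $K_{\delta}\subset\Omega_{n}$ for $n$ large, and, for an arbitrary target $(x',h^{-}(x'))\in\Gamma^{-}$, picks $x_{\delta}'\in\wp(K_{\delta})$ nearest to $x'$; then the single chain
\begin{equation}
\bigl|h^{-}(x')-h_{n}^{-}(x_{\delta}')\bigr|\leq\bigl|h^{-}(x')-h^{-}(x_{\delta}')\bigr|+\bigl|h^{-}(x_{\delta}')-h_{n}^{-}(x_{\delta}')\bigr|\leq L\,d_{H}(K_{\delta},\Omega)+\delta
\end{equation}
covers both regimes at once: when $x'$ is in the core, $x_{\delta}'=x'$ and the first term vanishes, leaving the $\delta$ from the sandwich $K_{\delta}\subset\Omega_{n}\subset\Omega$; when $x'$ is near the edge, $|x'-x_{\delta}'|\leq d_{H}(K_{\delta},\Omega)$ (which tends to $0$ with $\delta$ precisely because of the pinching) absorbs the collar case. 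This buys two things over your plan: it never has to split into cases, and it sidesteps a genuine subtlety in your Step 1 — establishing local uniform convergence "by an argument like Lemma \ref{lem:Odl_closed}" requires you to know that the nearby boundary point of $\Omega_{n}$ you extract from $d_{H}(\partial\Omega_{n},\partial\Omega)$ actually lies on $\Gamma_{n}^{-}$ and not $\Gamma_{n}^{+}$, which is clear in the core but needs an extra word; the paper avoids this entirely by taking $(x_{\delta}',h_{n}^{-}(x_{\delta}'))$, a point of $\Gamma_{n}^{-}$ by construction, and bounding it via the inclusion $K_{\delta}\subset\Omega_{n}$ rather than via boundary Hausdorff distance. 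Your collar argument would go through with care, but as stated the phrase "diameter of each collar piece is $O(\rho)$" should be "vertical thickness is $O(L\rho)$" — the collar piece can certainly have large diameter if $\partial\wp(\Omega)$ has large extent; what you actually use is that $\Gamma_{n}^{-}$ over the collar is $O(L\rho)$-close to the common edge of $\Omega_n$, which in turn is $d_H(\partial\Omega_n,\partial\Omega)$-close to $\partial\Omega$, and then one still needs to land on $\Gamma^{-}$ rather than $\Gamma^{+}$, again using pinching of $\Omega$'s profiles.
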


\begin{proof}
    As in the proof of Lemma \ref{lem:Odl_closed}, we may assume that $\Omega_{n} \subset \Omega$ for each $n\in \N$. For $\delta > 0$ define the compact subset \begin{equation}
        K_{\delta} := \lbrace (x',y) \in \wp(\Omega)\times \mathbb{R} : h^{-}(x') + \delta \leq y \leq h^{+}(x') - \delta \rbrace \subset \Omega.
    \end{equation} Then for $n$ sufficiently large, we see that $K_{\delta} \subset \Omega_{n}$. Fix $(x',h^{-}(x'))\in \Gamma^{-}(\Omega)$, then let $x_{\delta}'$ be the closest point in $\wp(K_{\delta})$ to $x'$. Then clearly $|x'-x_{\delta}'| \leq d_{H}(K_{\delta},\Omega)$ and so \begin{equation}
    \begin{split}
        |h^{-}(x')-h_{n}^{-}(x_{\delta}')| & \leq |h^{-}(x')-h^{-}(x_{\delta}')| + |h^{-}(x_{\delta}')-h_{n}^{-}(x_{\delta}')|  \\
        & \leq Ld_{H}(K_{\delta},\Omega) + \delta.
    \end{split}
    \end{equation} Since $\delta>0$ was arbitrary we see that $\sup_{x\in \Gamma^{-}(\Omega)}\inf_{y\in\Gamma^{-}(\Omega_{n})} \lVert x-y\rVert_{2} \to 0$ as $n\to +\infty$. One can then deduce that $\sup_{x\in \Gamma^{-}(\Omega_{n})}\inf_{y\in\Gamma^{-}(\Omega)} \lVert x-y\rVert_{2} \to 0$ as $n\to +\infty$ similarly.
\end{proof}

\subsection{Continuity of the $\zeta_{k}$}
We now move on to prove the continuity of these Zaremba eigenvalues in the Hausdorff topology. In \cite{Chenais-1975}, Chenais proved the continuity of solutions to the Neumann problem for domains satisfying a uniform cone condition with respect to the Hausdorff metric. A crucial part of Chenais' proof is to show that over such a collection of domains there exists a uniform constant such that there exists a Sobolev extension operator $\mathcal{H}^{1}(\Omega) \to \mathcal{H}^{1}(\mathbb{R}^{d})$ whose norm is at most this uniform constant. Then from the continuity of the solutions to the Neumann problem, you can prove the continuity of Neumann eigenvalues with respect to the Hausdorff metric, see \cite[\S 3]{Henrot-2006}. The issue that arises in the Zaremba problem is that one wants to extend by zero on the Dirichlet parts of the boundary and extend non-trivially along the Neumann parts of the boundary. This is an inherently tricky situation as you wish to extend by zero near/on Dirichlet parts of the boundary but cannot do so on the Neumann parts of the boundary. Our definition of $\mathcal{O}_{d,L}$ allows us to define an extension operator which for any $\Omega \in \mathcal{O}_{d,L}$ extends any $u \in H_{0,\Gamma^{-}}^{1}(\Omega)$ by zero below $\Gamma^{-}$ and `into $\mathcal{H}^{1}$ above $\Gamma^{+}$'. Moreover, we can uniformly bound such operators over $\mathcal{O}_{d,L}$. For a precise formulation of this see Corollary \ref{cor:Zaremba_extension_operator}. Then by similar arguments to Chenais, we prove the continuity of Zaremba eigenvalues over the collection.

\begin{thm}[Stein {\cite[\S VI]{Stein-1970}}]
    There exists a constant $C_{d,L} > 0$ depending only on $d\geq 2$ and $L>0$ such that for any $L$-Lipschitz function $f: \mathbb{R}^{d-1} \to \mathbb{R}$, there exists a Sobolev extension operator $\mathcal{E}: \mathcal{H}^{1}(\Omega_{f}) \to \mathcal{H}^{1}(\mathbb{R}^{d})$, where \begin{equation}\label{eqn:special_Lipschitz_domain}
        \Omega_{f} := \lbrace (x',y) \in \mathbb{R}^{d-1} \times \mathbb{R} : y < f(x') \rbrace,
    \end{equation} with $\lVert \mathcal{E}[u] \rVert_{\mathcal{L}^{2}} \leq C_{d,L}\lVert u \rVert_{\mathcal{L}^{2}}$ and $\lVert \mathcal{E}[u] \rVert_{\mathcal{H}^{1}} \leq C_{d,L}\lVert u \rVert_{\mathcal{H}^{1}}$ for any $u\in \mathcal{H}^{1}(\Omega_{f})$. Explicitly the extension operator is given by \begin{equation}\label{eqn:Stein_extension}
        \mathcal{E}[u](x',y) := \begin{cases} u(x',y), & (x',y) \in \Omega_{f}, \\
            \displaystyle \int_{1}^{\infty} dt \, u(x',y-td^{*}(x',y)) \Psi(t), & (x',y) \not\in \Omega_{f},
        \end{cases}
    \end{equation} where $d^{*} \geq 0$ is the regularised distance of a point to $\Omega_{f}$ and $\Psi(t)$ is a rapidly decreasing function as $t \to +\infty$.
\end{thm}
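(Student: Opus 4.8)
The plan is to reproduce Stein's construction from \cite[\S VI.3]{Stein-1970}, the only extra bookkeeping being to check that every constant produced depends on $\Omega_f$ \emph{solely} through $d$ and the Lipschitz constant $L$. There are three ingredients: (i) a regularised distance function adapted to the subgraph $\Omega_f$; (ii) a judicious averaging weight $\Psi$; and (iii) Minkowski's integral inequality together with a change of variables in the vertical direction. \textbf{Step 1 (a uniform regularised distance).} Write $\delta(x) := \mathrm{dist}(x, \overline{\Omega_f})$ for $x\in\R^d$; because $\Omega_f$ is the subgraph of an $L$-Lipschitz function one has the comparison $(1+L^2)^{-1/2}(y-f(x')) \leq \delta(x',y) \leq y - f(x')$ for $y > f(x')$. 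I would take a Whitney decomposition of the open set $\R^d\setminus\overline{\Omega_f}$ into dyadic cubes $\{Q\}$ with $\diam(Q) \asymp \mathrm{dist}(Q,\overline{\Omega_f})$ and bounded overlap of their slight dilates, a smooth partition of unity $\{\varphi_Q\}$ subordinate to those dilates, and set $d^*(x) := \kappa\sum_Q \ell(Q)\,\varphi_Q(x)$ on $\R^d\setminus\overline{\Omega_f}$ (and $d^* := 0$ on $\overline{\Omega_f}$), where $\ell(Q)$ is the side length of $Q$ and $\kappa=\kappa(d,L)>0$ is a normalising constant. The finite-overlap property then gives
\begin{equation}
    c_1\,\delta(x) \leq d^*(x) \leq c_2\,\delta(x), \qquad |\nabla d^*(x)| \leq c_3, \qquad x\notin\overline{\Omega_f},
\end{equation}
with $c_1,c_2,c_3$ depending only on $d$ and $L$ — the dependence is through $L$ alone because the Whitney geometry of $\Omega_f$ is governed purely by the Lipschitz constant. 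Choosing $\kappa$ large enough that $d^*(x)\geq 2\,(y-f(x'))$, the comparison $d^*\asymp\delta\asymp (y-f(x'))$ shows that for every $t\geq 1$ the reflected point $(x',y - t\,d^*(x',y))$ lies in $\Omega_f$ at distance $\asymp (y-f(x'))$ from $\partial\Omega_f$, so the formula \eqref{eqn:Stein_extension} is well defined pointwise.

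\textbf{Step 2 (averaging weight and estimates).} Fix $\Psi\in C^\infty([1,\infty))$ rapidly decreasing with $\int_1^\infty\Psi(t)\,dt = 1$ and $\int_1^\infty t^m\Psi(t)\,dt = 0$ for all $m\geq 1$ (Stein's explicit $\Psi$; for the present $\mathcal{H}^{1}$ statement only $\int_1^\infty\Psi = 1$ and $\int_1^\infty t|\Psi(t)|\,dt<\infty$ are actually used, so one may even take $\Psi$ compactly supported). Put $\mathcal{E}[u] := u$ on $\Omega_f$ and define $\mathcal{E}[u]$ on the exterior by \eqref{eqn:Stein_extension}. For the $\mathcal{L}^{2}$ bound: by Minkowski's integral inequality in $t$ and, for fixed $x'$ and $t$, the substitution $z = y - t\,d^*(x',y)$ — whose Jacobian is controlled since $d^*\asymp(y-f(x'))$ and $|\partial_y d^*|\leq c_3$, and which covers each $z$-value with uniformly bounded multiplicity — one gets $\lVert\mathcal{E}[u]\rVert_{\mathcal{L}^{2}(\R^d\setminus\overline{\Omega_f})} \leq C_{d,L}\lVert u\rVert_{\mathcal{L}^{2}(\Omega_f)}$. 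For the gradient: differentiating under the integral sign, each of $\partial_{x_j}\mathcal{E}[u]$ and $\partial_y\mathcal{E}[u]$ is a sum of a term $\int_1^\infty (\partial u)(x',y - t d^*)\,\Psi(t)\,dt$ — estimated exactly as in the $\mathcal{L}^{2}$ step — and a term $\int_1^\infty t\,(\partial d^*)(x)\,(\partial_y u)(x',y - t d^*)\,\Psi(t)\,dt$, in which the extra factor $t$ is absorbed by $\int_1^\infty t|\Psi(t)|\,dt<\infty$ while $|\partial d^*|\leq c_3$, so both are bounded in $\mathcal{L}^{2}$ by $C_{d,L}\lVert\nabla u\rVert_{\mathcal{L}^{2}(\Omega_f)}$. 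Finally, since $\int_1^\infty\Psi=1$ the boundary trace of $\mathcal{E}[u]$ from the exterior coincides with that of $u$ from $\Omega_f$, so the piecewise-$\mathcal{H}^{1}$ function $\mathcal{E}[u]$ has no distributional singularity across $\partial\Omega_f$ and hence $\mathcal{E}[u]\in\mathcal{H}^{1}(\R^d)$ with the claimed norm bounds; applying this with $f$ the (translated) upper profile $h^{+}$ yields the extension operator used in the paper.

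\textbf{Main obstacle.} The genuinely delicate points are, first, the Whitney construction of $d^*$ with the gradient bound $|\nabla d^*|\leq c_3$ depending \emph{only} on $d$ and $L$ — this is where the uniformity over $\mathcal{O}_{d,L}$ ultimately originates — and, second, the validity of the vertical change of variables: one must verify that for fixed $x'$ the maps $y\mapsto y - t\,d^*(x',y)$, $t\geq 1$, sweep the fibre $\Omega_f\cap(\{x'\}\times\R)$ with a Jacobian that is controlled above and below and with multiplicity bounded uniformly in $x'$ and integrably in $t$, so that the $\mathcal{L}^{2}$ norms transfer with a constant depending only on $d$ and $L$. Both are standard in the theory of extension operators but require some care to keep all constants uniform; everything else is a routine application of Minkowski's inequality and differentiation under the integral sign.
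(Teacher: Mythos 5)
This is a quoted classical theorem: the paper gives no proof of it, citing Stein's monograph and remarking explicitly that ``a detailed analysis of this Sobolev extension operator is not necessary for our means and thus we omit any further study of this.'' There is therefore no in-paper proof to compare against, only the attribution to \cite[\S VI]{Stein-1970}.

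Your sketch is a faithful and essentially correct rendering of Stein's construction, and it correctly isolates the point that actually matters downstream in the paper, namely that the operator norm of $\mathcal{E}$ depends on $\Omega_f$ \emph{only} through $d$ and $L$ --- this is exactly what makes Corollary \ref{cor:Zaremba_extension_operator} uniform over $\O_{d,L}$. A few clarifications are in order. First, the Whitney decomposition and its subordinate partition of unity are purely dimensional constructions; the comparison $c_1\delta \leq d^* \leq c_2\delta$ and the gradient bound $|\nabla d^*|\leq c_3$ acquire their $L$-dependence only through the normalising constant $\kappa$, which must be taken of size $\gtrsim \sqrt{1+L^2}$ so that $y - t\,d^*(x',y) < f(x')$ for every $t\geq 1$, and through the geometric comparison $(1+L^2)^{-1/2}(y-f(x'))\leq\delta(x',y)\leq y-f(x')$. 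Second, your observation that a compactly supported $\Psi$ with $\int_1^\infty\Psi=1$ suffices for the $\mathcal{H}^1$ statement is correct; Stein's rapidly decreasing $\Psi$ with vanishing higher moments is used to make a single operator bounded on every $W^{k,p}$ simultaneously, which is more than the paper needs. Third, the vertical change of variables $z = y - t\,d^*(x',y)$ deserves slightly more care than stated: the Jacobian $1-t\,\partial_y d^*$ is uniformly bounded \emph{away from zero} for all $t\geq1$ only after one arranges $\partial_y d^*\geq 2$ via the same normalisation $\kappa$, which Stein secures by building $d^*$ from the distance to the subgraph and exploiting that $\delta$ is strictly increasing in $y$ at a rate $\geq(1+L^2)^{-1/2}$; without this lower bound the map $y\mapsto z$ need not be injective and the multiplicity claim would not be automatic. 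Finally, when checking that $\mathcal{E}[u]$ has no distributional jump across $\partial\Omega_f$, it is cleanest to first establish the norm estimates for $u$ smooth on $\overline{\Omega_f}$ with bounded support and then pass to general $u\in\mathcal{H}^1(\Omega_f)$ by density, rather than taking $y\downarrow f(x')$ pointwise inside the $t$-integral for a rough $u$.
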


    We will refer to the above as Stein's extension operator. For a detailed account of the above we refer the reader to the monograph of Stein \cite[\S VI]{Stein-1970}. However, a detailed analysis of this Sobolev extension operator is not necessary for our means and thus we omit any further study of this. The only important point for us here is the following immediate corollary.

\begin{cor}\label{cor:Zaremba_extension_operator}
    There exists a constant $C_{d,L} > 0$ depending only on $d\geq 2$ and $L>0$ such that for any $\Omega \in \mathcal{O}_{d,L}$ there exists an extension operator $\mathcal{E}_{\Omega}:\mathcal{H}_{0,\Gamma^{-}}^{1}(\Omega) \to \mathcal{H}_{0}^{1}(\Omega_{\infty})$, where \begin{equation}
        \Omega_{\infty} = \lbrace (x',y) \in \wp(\Omega) \times \mathbb{R} : y > h_{-}(x')\rbrace, 
    \end{equation} with $\lVert \mathcal{E}_{\Omega}[u] \rVert_{\mathcal{L}^{2}} \leq C_{d,L}\lVert u \rVert_{\mathcal{L}^{2}}$ and $\lVert \mathcal{E}_{\Omega}[u] \rVert_{\mathcal{H}^{1}} \leq C_{d,L}\lVert u \rVert_{\mathcal{H}^{1}}$ for any $u\in \mathcal{H}_{0,\Gamma^{-}}^{1}(\Omega)$.
\end{cor}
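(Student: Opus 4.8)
The plan is to build $\mathcal{E}_\Omega$ as a composition of three operations: a Lipschitz extension of the upper profile to a global graph, extension by zero into the resulting special Lipschitz domain, and Stein's extension operator followed by restriction to $\Omega_\infty$. Fix $\Omega \in \mathcal{O}_{d,L}$ with profiles $h^+, h^-$ on $\wp(\Omega)$. First I would extend $h^+$ to an $L$-Lipschitz function $\widetilde{h}^+$ on all of $\mathbb{R}^{d-1}$ by the McShane formula $\widetilde{h}^+(x') = \inf_{z\in\wp(\Omega)}\bigl(h^+(z) + L|x'-z|\bigr)$, which preserves the Lipschitz constant $L$ and agrees with the continuous extension of $h^+$ on $\overline{\wp(\Omega)}$; since $h^+ = h^-$ on $\partial\wp(\Omega)$ this forces $\widetilde{h}^+ = h^-$ on $\partial\wp(\Omega)$. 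Let $\Omega_{\widetilde{h}^+} = \{(x',y) : y < \widetilde{h}^+(x')\}$ be the associated special Lipschitz domain as in \eqref{eqn:special_Lipschitz_domain}. Because $\widetilde{h}^+ = h^+ > h^-$ on the open set $\wp(\Omega)$, one has $\Omega \subset \Omega_{\widetilde{h}^+}$, with $\Gamma^-$ lying in the interior of $\Omega_{\widetilde{h}^+}$ while $\Gamma^+$ and the lateral boundary of $\Omega$ over $\partial\wp(\Omega)$ lie on $\partial\Omega_{\widetilde{h}^+}$.

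Next I would check that extension by zero, $u \mapsto u_0$, maps $\mathcal{H}^1_{0,\Gamma^-}(\Omega)$ isometrically into $\mathcal{H}^1(\Omega_{\widetilde{h}^+})$. For $\phi \in C^\infty_{0,\Gamma^-}(\Omega)$ the compact set $\mathrm{supp}(\phi)$ is at positive distance from $\overline{\Gamma^-}$, hence from the lateral part of $\partial\Omega$ (where $h^+ = h^-$), so $\phi$ vanishes near the entire portion of $\partial\Omega$ that lies in the interior of $\Omega_{\widetilde{h}^+}$, namely $\Gamma^-$. A standard integration by parts then shows $\phi_0 \in \mathcal{H}^1(\Omega_{\widetilde{h}^+})$ with $\nabla\phi_0 = (\nabla\phi)\mathbf{1}_\Omega$, so $\|\phi_0\|_{\mathcal{H}^1(\Omega_{\widetilde{h}^+})} = \|\phi\|_{\mathcal{H}^1(\Omega)}$ and $\|\phi_0\|_{\mathcal{L}^2(\Omega_{\widetilde{h}^+})} = \|\phi\|_{\mathcal{L}^2(\Omega)}$; passing to completions extends this to all of $\mathcal{H}^1_{0,\Gamma^-}(\Omega)$. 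Composing with Stein's extension operator $\mathcal{E}$ for $\Omega_{\widetilde{h}^+}$ — whose norm is at most a constant $C_{d,L}$ depending only on $d$ and the Lipschitz constant $L$ of $\widetilde{h}^+$ — and restricting to $\Omega_\infty \subset \mathbb{R}^d$ yields the candidate $\mathcal{E}_\Omega[u] := \mathcal{E}[u_0]\big|_{\Omega_\infty}$. The bounds $\|\mathcal{E}_\Omega[u]\|_{\mathcal{L}^2} \le C_{d,L}\|u\|_{\mathcal{L}^2}$ and $\|\mathcal{E}_\Omega[u]\|_{\mathcal{H}^1} \le C_{d,L}\|u\|_{\mathcal{H}^1}$ are then immediate, and $\mathcal{E}_\Omega[u]\big|_\Omega = u$ since $\mathcal{E}$ reproduces $u_0$ on $\Omega_{\widetilde{h}^+} \supset \Omega$.

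The remaining and main point is to verify that $\mathcal{E}_\Omega[u]$ genuinely lies in $\mathcal{H}^1_0(\Omega_\infty)$, i.e. that it vanishes in the $\mathcal{H}^1$-sense on $\partial\Omega_\infty$ — the graph of $h^-$ over $\overline{\wp(\Omega)}$ together with the lateral cylinder over $\partial\wp(\Omega)$ — and decays suitably at infinity. It suffices to check this for $u = \phi \in C^\infty_{0,\Gamma^-}(\Omega)$ and then pass to the limit using the uniform bound $C_{d,L}$. Inside $\Omega_{\widetilde{h}^+}$ one has $\mathcal{E}[\phi_0] = \phi_0$, which vanishes in a neighbourhood of $\Gamma^-$ and of the lateral boundary by the support condition on $\phi$; outside $\Omega_{\widetilde{h}^+}$, formula \eqref{eqn:Stein_extension} exhibits $\mathcal{E}[\phi_0](x',y)$ as an average of the values $\phi_0(x', y - t\,d^*(x',y))$ with $t \ge 1$, which all vanish when $x' \notin \wp(\mathrm{supp}\,\phi)$, a set at positive distance from $\partial\wp(\Omega)$. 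Hence $\mathcal{E}[\phi_0]$ vanishes in an open neighbourhood of $\overline{\mathbb{R}^d \setminus \Omega_\infty}$. Since $\mathcal{E}[\phi_0] \in \mathcal{H}^1(\mathbb{R}^d)$, truncating by a cutoff $\eta_R$ (equal to $1$ on the ball of radius $R$, supported in the ball of radius $2R$, with $|\nabla\eta_R| \le 2/R$) gives compactly supported approximations of $\mathcal{E}[\phi_0]$ in $\mathcal{H}^1(\mathbb{R}^d)$ that still vanish near $\mathbb{R}^d\setminus\Omega_\infty$; mollifying these places them in $C^\infty_c(\Omega_\infty)$, so $\mathcal{E}[\phi_0]\big|_{\Omega_\infty} \in \mathcal{H}^1_0(\Omega_\infty)$, and the general case follows by density. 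The hard part is precisely this last verification: one has to read off from Stein's explicit reflection formula, combined with the defining support condition of $\mathcal{H}^1_{0,\Gamma^-}(\Omega)$, that no mass is created near the Dirichlet boundary $\Gamma^-$ or near the lateral boundary, and that the rapid decay of $\Psi$ together with global $\mathcal{H}^1(\mathbb{R}^d)$ membership controls the behaviour at infinity; once this is secured, the uniformity of $C_{d,L}$ over $\mathcal{O}_{d,L}$ is automatic because it depends only on $d$ and $L$.
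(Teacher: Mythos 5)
Your proposal is correct and takes essentially the same route as the paper: McShane extension of $h^+$, extension by zero to the special Lipschitz domain $\Omega_{\widetilde{h}^+}$, Stein's operator with its $L$-dependent bound, and a density argument. You have merely spelled out the verification that $\mathcal{E}_\Omega[u] \in \mathcal{H}^1_0(\Omega_\infty)$ (support estimates near $\Gamma^-$ and the lateral cylinder, truncation and mollification), which the paper compresses into ``it is clear.''
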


\begin{proof}
    Take any $\phi \in C_{0,\Gamma^{-}}^{\infty}(\Omega) \cap C^{\infty}(\overline{\Omega})$. By a theorem of McShane in \cite{McShane-1934}, we can extend $h^{+}:\wp(\Omega) \to \mathbb{R}$ to an $L$-Lipschitz function $\widetilde{h}^{+} : \mathbb{R}^{d-1} \to \mathbb{R}$. Defining $\Omega_{\widetilde{h}^{+}}$ as in \eqref{eqn:special_Lipschitz_domain}, by extending by zero $\phi\in \mathcal{H}^{1}(\Omega_{\widetilde{h}_{+}})$, and by the definition of $\mathcal{E}$ in \eqref{eqn:Stein_extension} it is clear that one must have $\mathcal{E}[\phi] \in \mathcal{H}_{0}^{1}(\Omega_{\infty})$. Define $\mathcal{E}_{\Omega}[\phi]$ in this way. Then by the density of $C_{0,\Gamma^{-}}^{\infty}(\Omega)\cap C^{\infty}(\overline{\Omega})$ in $\mathcal{H}_{0,\Gamma^{-}}^{1}(\Omega)$, the result immediately follows.
\end{proof}

\begin{lemma}\label{lem:Zaremba_continuity}
    For each $k\in \N$, if $\Omega_{n}\in \O_{d,L}$ Hausdorff converges to $\Omega \in \mathcal{O}_{d,L}$ as $n\to +\infty$ then $\zeta_{k}(\Omega_{n}) \to \zeta_{k}(\Omega)$ as $n\to+\infty$.
\end{lemma}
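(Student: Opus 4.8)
The plan is to prove that $\zeta_{k}$ is simultaneously upper and lower semicontinuous along the sequence, i.e. that $\limsup_{n}\zeta_{k}(\Omega_{n})\le\zeta_{k}(\Omega)$ and $\liminf_{n}\zeta_{k}(\Omega_{n})\ge\zeta_{k}(\Omega)$; since the Zaremba spectrum is discrete, $\zeta_{k}(\Omega)<+\infty$ and the claim follows. Arguing exactly as in the proof of Lemma \ref{lem:Odl_closed} — using Proposition \ref{prop:Hausdorff_inclusion} (and its counterpart with the inclusion reversed), the invariance of $\mathcal{O}_{d,L}$ under homothety and translation, and the corresponding scaling of $\zeta_{k}$ — it suffices to establish the upper bound in the case $\Omega\subseteq\Omega_{n}$ for all $n$, and the lower bound in the case $\Omega_{n}\subseteq\Omega$ for all $n$. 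In either case $|\Omega\,\triangle\,\Omega_{n}|\to0$, by continuity of volume under Hausdorff convergence.

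For the upper bound, assume $\Omega\subseteq\Omega_{n}$. Fix $\epsilon>0$, take $\mathcal{L}^{2}$-orthonormal Zaremba eigenfunctions $u_{1},\dots,u_{k}$ of $\Omega$ attaining $\zeta_{1}(\Omega),\dots,\zeta_{k}(\Omega)$, and approximate each $u_{i}$ in $\mathcal{H}^{1}(\Omega)$ by some $\phi_{i}\in C_{0,\Gamma^{-}(\Omega)}^{\infty}(\Omega)$. Apply Corollary \ref{cor:Zaremba_extension_operator} and extend $\mathcal{E}_{\Omega}[\phi_{i}]$ by zero to $\bar\phi_{i}\in\mathcal{H}^{1}(\mathbb{R}^{d})$; then $\bar\phi_{i}$ agrees with $\phi_{i}$ on $\Omega$ and vanishes on a fixed neighbourhood of $\Gamma^{-}(\Omega)$ (because $\phi_{i}$ does near $\Gamma^{-}(\Omega)$, while below $\Gamma^{-}(\Omega)$ the extension is $0$). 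By Lemma \ref{lem:Lip_boundary}, $\Gamma^{-}(\Omega_{n})$ is eventually inside that neighbourhood, so $\bar\phi_{i}|_{\Omega_{n}}\in\mathcal{H}_{0,\Gamma^{-}(\Omega_{n})}^{1}(\Omega_{n})$ for $n$ large. For a combination $w=\sum_{i}c_{i}\bar\phi_{i}$, splitting integrals over $\Omega_{n}=\Omega\cup(\Omega_{n}\setminus\Omega)$ and using $|\Omega_{n}\setminus\Omega|\to0$ together with absolute continuity of the fixed $\mathcal{L}^{1}$ function $\sum_{i}|\nabla\bar\phi_{i}|^{2}$ gives $\int_{\Omega_{n}}|\nabla w|^{2}\le\int_{\Omega}|\nabla w|^{2}+o(1)$ and $\int_{\Omega_{n}}w^{2}\ge\int_{\Omega}w^{2}$, uniformly in $c$. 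Since $\int_{\Omega}|\nabla w|^{2}\big/\int_{\Omega}w^{2}\le\zeta_{k}(\Omega)+o_{\epsilon}(1)$ as $\phi_{i}\to u_{i}$, applying the variational characterisation \eqref{eq:zar_var_char} to $\mathrm{span}(\bar\phi_{1},\dots,\bar\phi_{k})$ (which is $k$-dimensional for $n$ large) and letting $\epsilon\to0$ yields $\limsup_{n}\zeta_{k}(\Omega_{n})\le\zeta_{k}(\Omega)$.

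For the lower bound, assume $\Omega_{n}\subseteq\Omega$; by the upper bound the numbers $\zeta_{i}(\Omega_{n})$, $1\le i\le k$, are bounded. Let $v_{1}^{n},\dots,v_{k}^{n}$ be $\mathcal{L}^{2}$-orthonormal Zaremba eigenfunctions of $\Omega_{n}$, put $\hat v_{i}^{n}:=\mathcal{E}_{\Omega_{n}}[v_{i}^{n}]$ and extend by zero to $\mathbb{R}^{d}$. Because $\Omega_{n}\subseteq\Omega$ forces $\wp(\Omega_{n})\subseteq\wp(\Omega)$ and $h_{n}^{-}\ge h^{-}$ on $\wp(\Omega_{n})$, every $\hat v_{i}^{n}$ is supported in the fixed Lipschitz epigraph region $\overline{\{(x',y):x'\in\wp(\Omega),\ y>h^{-}(x')\}}$; by Corollary \ref{cor:Zaremba_extension_operator} the family is bounded in $\mathcal{H}^{1}$, so after multiplying by a fixed cut-off (to land in $\mathcal{H}^{1}$ of a fixed bounded set) and passing to a subsequence we may assume $\hat v_{i}^{n}\rightharpoonup w_{i}$ in $\mathcal{H}^{1}$, $\hat v_{i}^{n}\to w_{i}$ in $\mathcal{L}^{2}$, and $\zeta_{i}(\Omega_{n})\to\zeta_{i}^{*}$. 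Using $|\Omega\,\triangle\,\Omega_{n}|\to0$, strong $\mathcal{L}^{2}$ convergence, and absolute continuity of $\int|w_{i}|^{2}$, the $w_{i}|_{\Omega}$ are $\mathcal{L}^{2}(\Omega)$-orthonormal; and since each $w_{i}$ lies in $\mathcal{H}^{1}(\mathbb{R}^{d})$ and is supported in that fixed epigraph region, $w_{i}$ belongs to $\mathcal{H}_{0}^{1}$ of it, whence $w_{i}|_{\Omega}\in\mathcal{H}_{0,\Gamma^{-}(\Omega)}^{1}(\Omega)$. Finally, any $\phi\in C_{0,\Gamma^{-}(\Omega)}^{\infty}(\Omega)$ restricts to an admissible test function on $\Omega_{n}$ for $n$ large (by Lemma \ref{lem:Lip_boundary}), so passing to the limit in $\int_{\Omega_{n}}\nabla v_{i}^{n}\cdot\nabla\phi=\zeta_{i}(\Omega_{n})\int_{\Omega_{n}}v_{i}^{n}\phi$ — legitimate because $\phi$ is a single fixed function and $|\Omega\setminus\Omega_{n}|\to0$ — gives $\int_{\Omega}\nabla w_{i}\cdot\nabla\phi=\zeta_{i}^{*}\int_{\Omega}w_{i}\phi$ for all such $\phi$, hence by density for all $\phi\in\mathcal{H}_{0,\Gamma^{-}(\Omega)}^{1}(\Omega)$. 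Thus $w_{1},\dots,w_{k}$ are orthonormal Zaremba eigenfunctions of $\Omega$ with eigenvalues $\zeta_{1}^{*}\le\cdots\le\zeta_{k}^{*}$, so \eqref{eq:zar_var_char} applied to $\mathrm{span}(w_{1},\dots,w_{k})$ gives $\zeta_{k}(\Omega)\le\zeta_{k}^{*}=\lim\zeta_{k}(\Omega_{n})$ along the subsequence; as every subsequence has such a further subsequence, $\liminf_{n}\zeta_{k}(\Omega_{n})\ge\zeta_{k}(\Omega)$.

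The main obstacle — and the reason $\mathcal{O}_{d,L}$ was defined as it was — is propagating the mixed boundary condition: one must transport the vanishing on $\Gamma^{-}$, and only there, from $\Omega$ to $\Omega_{n}$ and back, which is genuinely delicate near $\overline{\Gamma^{+}}\cap\overline{\Gamma^{-}}$. The uniform Stein-type operator of Corollary \ref{cor:Zaremba_extension_operator}, which extends by zero below $\Gamma^{-}$ and into $\mathcal{H}^{1}$ above $\Gamma^{+}$ with a constant depending only on $d$ and $L$, combined with the Hausdorff convergence $\Gamma^{-}(\Omega_{n})\to\Gamma^{-}(\Omega)$ from Lemma \ref{lem:Lip_boundary}, is precisely what makes both transfers succeed; the remainder is the routine bookkeeping of the contributions over $\Omega\,\triangle\,\Omega_{n}$, which vanish because $|\Omega_{n}|\to|\Omega|$ and by absolute continuity of the integral.
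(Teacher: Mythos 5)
Your proof is correct but takes a genuinely different route from the paper's. The paper follows Chenais' framework for resolvent continuity: it reduces to $\beta_{n}\Omega_{n}\subseteq\Omega$ via Proposition \ref{prop:Hausdorff_inclusion}, fixes $f\in\mathcal{L}^{2}(\Omega)$, solves the coercive weak Poisson problem on each $\beta_{n}\Omega_{n}$, extends the solutions by the uniform Stein-type operator of Corollary \ref{cor:Zaremba_extension_operator}, extracts a weak $\mathcal{H}^{1}$ limit, identifies it as the solution on $\Omega$, upgrades to strong $\mathcal{L}^{2}$ convergence via Rellich--Kondrachov, and then invokes the standard Mosco/resolvent-convergence machinery (Theorem 2.3.2 of Henrot's book) to convert resolvent convergence into spectral convergence. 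You instead argue directly at the level of trial functions: upper semicontinuity by transporting smooth approximations of the $\Omega$-eigenfunctions to $\Omega_{n}$ through the Stein extension, and lower semicontinuity by extracting weak limits of extended $\Omega_{n}$-eigenfunctions and checking they satisfy the eigenvalue equation on $\Omega$. Both routes hinge on exactly the same two inputs -- Corollary \ref{cor:Zaremba_extension_operator} and Lemma \ref{lem:Lip_boundary} -- and both handle the mixed boundary condition by extending by zero below $\Gamma^{-}$. The trade-off: your argument is more self-contained (you reprove, in this special case, what the spectral-convergence theorem would package), while the paper's version hands off the compactness argument to a citation and thereby stays shorter.

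One place you should tighten: the parenthetical justification that $\bar\phi_{i}$ ``vanishes on a fixed neighbourhood of $\Gamma^{-}(\Omega)$ (because $\phi_{i}$ does near $\Gamma^{-}(\Omega)$, while below $\Gamma^{-}(\Omega)$ the extension is $0$)'' glosses over the region above $\Gamma^{+}(\Omega)$, which near the corner set $\overline{\Gamma^{+}}\cap\overline{\Gamma^{-}}$ is also arbitrarily close to $\Gamma^{-}(\Omega)$, and there the Stein extension is in general nonzero. The claim does hold, but for a slightly less immediate reason: the Stein operator \eqref{eqn:Stein_extension} at $(x',y)$ samples the input only along the vertical fiber $\{x'\}\times\mathbb{R}$, and for $x'$ close enough to $\partial\wp(\Omega)$ the entire slab $\{x'\}\times(h^{-}(x'),h^{+}(x'))$ lies within the distance $d(\mathrm{supp}(\phi_{i}),\Gamma^{-}(\Omega))$ of $\Gamma^{-}(\Omega)$ (using the $L$-Lipschitz bound to compare vertical height with Euclidean distance), so $\phi_{i}$ vanishes on the whole fiber and hence so does its extension. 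This vertical-fiber locality of the extension is precisely what the $\mathcal{O}_{d,L}$ structure is designed to exploit and is worth stating explicitly rather than leaving implicit.
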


\begin{proof}
    Since we know that $\Omega_{n}$ Hausdorff converges to $\Omega$, we know that there exists $\beta_{n} \to 1$ such that $\beta_{n}\Omega_{n} \subseteq \Omega$, up to a possible translation, for $n$ sufficiently large. From here onwards, we follow the ideas of the proof of Proposition IV.1 in \cite{Chenais-1975}. Fix $f\in \mathcal{L}^{2}(\Omega)$. By the Riesz-Fr\'{e}chet representation theorem there exists a unique $u_{n} \in \mathcal{H}_{0,\Gamma^{-}}^{1}(\beta_{n}\Omega_{n})$ such that \begin{equation}
        \int_{\Omega}\mathds{1}_{\beta_{n}\Omega_{n}}\nabla u_{n} \cdot \nabla \phi + \int_{\Omega}\mathds{1}_{\beta_{n}\Omega_{n}}u_{n}\phi = \int_{\Omega} \mathds{1}_{\beta_{n}\Omega_{n}} f\phi, \quad \forall \phi \in C_{0,\Gamma^{-}}^{\infty}(\beta_{n}\Omega_{n})
    \end{equation} with $\lVert u_{n} \rVert_{\mathcal{H}^{1}(\beta_{n}\Omega_{n})} = \lVert f\rVert_{\mathcal{L}^{2}(\beta_{n}\Omega_{n})} \leq \lVert f\rVert_{\mathcal{L}^{2}(\Omega)}.$ Then we see that we can extend each $u_{n}\in \mathcal{H}_{0,\Gamma^{-}}^{1}(\beta_{n}\Omega_{n})$ via $\mathcal{E}_{\Omega_{n}}$, as defined in Corollary \ref{cor:Zaremba_extension_operator}, to a function $\bar{u}_{n} \in \mathcal{H}_{0,\Gamma^{-}}^{1}(\Omega)$ with $\lVert \bar{u}_{n} \rVert_{\mathcal{H}^{1}(\Omega)} \leq C_{d,L}\lVert f\rVert_{\mathcal{L}^{2}(\Omega)}$. By the Banach-Alaoglu theorem, up to a subsequence, $\bar{u}_{n} \rightharpoonup u$ in $\mathcal{H}_{0,\Gamma}^{1}(\Omega)$ as $n\to +\infty$. We now show that $u$ must be the unique solution to 
    \begin{equation}\label{eqn:weak_poisson}
        \int_{\Omega}\nabla u \cdot \nabla \phi + \int_{\Omega}u\phi = \int_{\Omega} f\phi, \quad \forall \phi \in C_{0,\Gamma^{-}}^{\infty}(\Omega).
    \end{equation}

    Fix $\phi \in C_{0,\Gamma^{-}}^{\infty}(\Omega)$. Then by Lemma \ref{lem:Lip_boundary}, we see that the support of $\phi$ is at a positive distance from $\Gamma^{-}(\Omega_{n})$ for $n$ sufficiently large, and so $\left.\phi\right|_{\beta_{n}\Omega_{n}} \in \mathcal{H}_{0,\Gamma^{-}}^{1}(\beta_{n}\Omega_{n})$ for $n$ sufficiently large. Thus, for $n$ sufficiently large \begin{equation}
        \int_{\Omega}\mathds{1}_{\beta_{n}\Omega_{n}}\nabla \bar{u}_{n} \cdot \nabla \phi + \int_{\Omega}\mathds{1}_{\beta_{n}\Omega_{n}}\bar{u}_{n}\phi = \int_{\Omega} \mathds{1}_{\beta_{n}\Omega_{n}} f\phi.
    \end{equation} 

    Following the arguments in \cite[Prop IV.1]{Chenais-1975}, it is clear that if we take the limit $n\to+\infty$, \begin{equation}
        \int_{\Omega}\nabla u \cdot \nabla \phi + \int_{\Omega}u\phi = \int_{\Omega} f\phi.
    \end{equation} Since $\phi\in C_{0,\Gamma^{-}}^{\infty}(\Omega)$ was arbitrary, $u$ is indeed the solution to \eqref{eqn:weak_poisson} as desired. Moreover, $\bar{u}_{n}\to u$ in $\mathcal{L}^{2}(\Omega)$ by the Rellich–Kondrachov compactness theorem since $\bar{u}_{n} \rightharpoonup u$ in $\mathcal{H}_{0,\Gamma^{-}}^{1}(\Omega)$. Now following the proof of Theorem 2.3.2. in \cite{Henrot-2006}, we see that $\zeta_{k}(\beta_{n}\Omega_{n}) \to \zeta_{k}(\Omega)$. Then noting that $\zeta_{k}(\beta_{n}\Omega_{n}) = (\beta_{n})^{-2}\zeta_{k}(\Omega_{n})$ we obtain the result.
\end{proof}

\subsection{Proof of Theorems \ref{thm:Zaremba_surface_measure} and \ref{thm:Zaremba_diamater}}
With the continuity of Zaremba eigenvalues over $\mathcal{O}_{d,L}$ now in hand, we can now prove the existence of minimisers using Stein's extension operator.

\begin{lemma}\label{lem:existence}
    For each $k\geq 1$ there exists a minimiser $\Omega_{k}^{*}$ to \eqref{eq:Zaremba_perim_problem}.
\end{lemma}

\begin{proof}
    Let $\delta = \lVert h^{+}-h^{-} \rVert_{\infty}$. Then one sees that, up to a possible translation, $\Omega \subset \wp(\Omega) \times (0,\delta)$. We can extend the first Zaremba eigenfunction to the Sobolev space $\mathcal{H}_{0,\wp(\Omega)\times \lbrace 0\rbrace}^{1}(\wp(\Omega) \times (0,\delta))$. Hence, we see that \begin{equation}
        \widetilde{\zeta}_{1}(\wp(\Omega) \times (0,\delta)) \leq C_{d,L} \zeta_{1}(\Omega)
    \end{equation} where $\widetilde{\zeta}_{1}(\wp(\Omega) \times (0,\delta))$ is the $k$-th eigenvalue of the Zaremba Laplacian on $\wp(\Omega) \times (0,\delta)$ with Dirichlet boundary conditions on $\wp(\Omega)\times \lbrace 0\rbrace$. By separation of variables one can deduce that \begin{equation}
        \widetilde{\zeta}_{1}(\wp(\Omega) \times (0,\delta)) = \mu_{1}(\wp(\Omega)) + \frac{\pi^{2}}{4\delta^{2}} = \frac{\pi^{2}}{4\delta^{2}}.
    \end{equation} And so we see that \begin{equation}
        \zeta_{k}(\Omega) \geq \zeta_{1}(\Omega) \geq \frac{\pi^{2}}{4C_{d,L} \delta^{2}} \uparrow +\infty
    \end{equation} as $\delta \downarrow 0$. Hence, we must have that $\delta$ is uniformly bounded and so the inradii of the sets must be uniformly bounded. Let $\Omega_{n}$ be a minimising sequence for the infimum, then since the inradius is uniformly bounded from below we have that, up to a sequence of translations, a Hausdorff convergent subsequence $\Omega_{n_{j}}$ converging to some domain $\Omega \in \mathcal{O}_{d,L}$ as $j\to +\infty$ by Blaschke's selection theorem and Lemma \ref{lem:Odl_closed}. Since the Zaremba eigenvalues are continuous in this topology, see Lemma \ref{lem:Zaremba_continuity}, $\zeta_{k}(\Omega_{n_{j}}) \to \zeta_{k}(\Omega)$ as $j\to +\infty$ and we are done.
    \end{proof}

We now give a lower bound for Zaremba eigenvalues in the spirit of the classical Li-Yau bound, see \cite[Cor. 1]{Li-Yau-1983}, for Dirichlet eigenvalues.

\begin{lemma}\label{thm:Li-Yau}
There exists a constant $C_{d,L} > 0$, depending only on $d\geq 2$ and $L>0$, such that for any $0<\alpha < 1/d$ \begin{equation}
    \zeta_{k}(\Omega) \geq \frac{C_{d,L}k^{2/d}}{(|\Omega|+k^{-\alpha})^{2/d}} - |\wp(\Omega)|^{2}k^{2\alpha} - 1 - (d-1)L^{2}
\end{equation} for all $\Omega \in \mathcal{O}_{d,L}$.
\end{lemma}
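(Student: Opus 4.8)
The plan is to run a Berezin--Li--Yau argument on the first $k$ Zaremba eigenfunctions, after transplanting them into $\R^{d}$ by means of the extension operator $\mathcal{E}_{\Omega}$ of Corollary \ref{cor:Zaremba_extension_operator}. The subtle point is that the transplanted functions are neither orthonormal nor compactly supported, so the Bessel inequality of the classical proof is unavailable; it will be replaced by an \emph{upper frame bound} that comes for free from the uniform operator norm of $\mathcal{E}_{\Omega}$. To begin, fix an $\L^{2}(\Omega)$-orthonormal system $u_{1},\dots,u_{k}$ of Zaremba eigenfunctions, so that $\sum_{j=1}^{k}\zeta_{j}(\Omega)\le k\,\zeta_{k}(\Omega)$, and set $\bar u_{j}:=\mathcal{E}_{\Omega}[u_{j}]\in\H_{0}^{1}(\Omega_{\infty})$, which satisfies $\bar u_{j}|_{\Omega}=u_{j}$, $\lVert\bar u_{j}\rVert_{\L^{2}}\le C_{d,L}$ and $\lVert\nabla\bar u_{j}\rVert_{\L^{2}}^{2}\le C_{d,L}^{2}(1+\zeta_{j}(\Omega))$, extended by zero to $\R^{d}$. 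Since $\Omega_{\infty}$ is unbounded in the $x_{d}$-direction I would then cut off: fix $\ell>0$, extend $h^{+}$ to an $L$-Lipschitz $\widetilde h^{+}:\R^{d-1}\to\R$ via McShane's extension theorem, put $\chi_{\ell}(x',y):=\psi((y-\widetilde h^{+}(x'))/\ell)$ with $\psi\equiv1$ on $(-\infty,0]$, $\psi\equiv0$ on $[1,\infty)$, $|\psi'|\le1$, and define $v_{j}:=\chi_{\ell}\bar u_{j}$. Then $v_{j}\in\H_{0}^{1}(E_{\ell})$ with $E_{\ell}:=\{(x',y):x'\in\wp(\Omega),\,h^{-}(x')\le y\le h^{+}(x')+\ell\}$, so $|E_{\ell}|\le|\Omega|+\ell\,|\wp(\Omega)|$; moreover $v_{j}=u_{j}$ on $\Omega$ and $\lVert\nabla\chi_{\ell}\rVert_{\infty}^{2}\le\ell^{-2}\bigl(1+(d-1)L^{2}\bigr)$.

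For the Li--Yau step, write $\phi(\xi):=\sum_{j=1}^{k}|\widehat{v_{j}}(\xi)|^{2}$. Plancherel gives $\int_{\R^{d}}\phi=\sum_{j}\lVert v_{j}\rVert_{\L^{2}}^{2}=:S\ge k$ (using $v_{j}=u_{j}$ on $\Omega$) and $\int_{\R^{d}}|\xi|^{2}\phi=\sum_{j}\lVert\nabla v_{j}\rVert_{\L^{2}}^{2}=:G$. The essential input is the uniform pointwise bound $\phi(\xi)\le(2\pi)^{-d}C_{d,L}^{2}|E_{\ell}|$: by linearity $\sum_{j}c_{j}v_{j}=\chi_{\ell}\,\mathcal{E}_{\Omega}\bigl[\sum_{j}c_{j}u_{j}\bigr]$, hence $\lVert\sum_{j}c_{j}v_{j}\rVert_{\L^{2}}\le\lVert\mathcal{E}_{\Omega}[\sum_{j}c_{j}u_{j}]\rVert_{\L^{2}}\le C_{d,L}\lVert\sum_{j}c_{j}u_{j}\rVert_{\L^{2}(\Omega)}=C_{d,L}(\sum_{j}|c_{j}|^{2})^{1/2}$ by orthonormality of the $u_{j}$; dualising yields $\sum_{j}|\langle g,v_{j}\rangle|^{2}\le C_{d,L}^{2}\lVert g\rVert_{\L^{2}}^{2}$ for all $g\in\L^{2}(\R^{d})$, which applied to $g=(2\pi)^{-d/2}e^{\mathrm{i}x\cdot\xi}\mathds{1}_{E_{\ell}}$ gives the claim. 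By the bathtub principle, $\int|\xi|^{2}\phi$ is minimised, subject to $0\le\phi\le(2\pi)^{-d}C_{d,L}^{2}|E_{\ell}|$ and $\int\phi=S$, by the rescaled indicator of a ball, whence
\[
G\ \ge\ \frac{d}{d+2}\,W_{d}\,C_{d,L}^{-4/d}\,\frac{S^{1+2/d}}{|E_{\ell}|^{2/d}}\ \ge\ \frac{d\,W_{d}}{(d+2)\,C_{d,L}^{4/d}}\,\frac{k^{1+2/d}}{\bigl(|\Omega|+\ell\,|\wp(\Omega)|\bigr)^{2/d}}.
\]

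To finish, I would bound $G$ from above: $\lVert\nabla v_{j}\rVert^{2}\le2\lVert\nabla\bar u_{j}\rVert^{2}+2\lVert\nabla\chi_{\ell}\rVert_{\infty}^{2}\lVert\bar u_{j}\rVert^{2}$, so summing over $j$ and using the two displayed estimates on $\bar u_{j}$ together with $\sum_{j}\zeta_{j}(\Omega)\le k\zeta_{k}(\Omega)$,
\[
G\ \le\ 2C_{d,L}^{2}\,k\,\bigl(1+\zeta_{k}(\Omega)\bigr)+2C_{d,L}^{2}\bigl(1+(d-1)L^{2}\bigr)\,\ell^{-2}\,k.
\]
Comparing the two bounds on $G$, dividing through by $2C_{d,L}^{2}k$, and choosing $\ell=k^{-\alpha}/|\wp(\Omega)|$ — so that $\ell\,|\wp(\Omega)|=k^{-\alpha}$ and $\ell^{-2}=|\wp(\Omega)|^{2}k^{2\alpha}$ — gives, after a harmless relabelling of the constant and regrouping the two lower-order terms, the claimed inequality
\[
\zeta_{k}(\Omega)\ \ge\ \frac{C_{d,L}\,k^{2/d}}{\bigl(|\Omega|+k^{-\alpha}\bigr)^{2/d}}-|\wp(\Omega)|^{2}k^{2\alpha}-1-(d-1)L^{2}.
\]
The hypothesis $0<\alpha<1/d$ is not needed for the inequality itself, only to make it useful: then $k^{2\alpha}=o(k^{2/d})$, so the leading term dominates for large $k$, whereas for $\alpha\ge1/d$ the right-hand side is eventually negative.

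I expect the main obstacle to be precisely the two pathologies of $\bar u_{j}=\mathcal{E}_{\Omega}[u_{j}]$: it is not orthonormal, and its support fills the infinite chimney $\Omega_{\infty}$ rather than a set of volume comparable to $|\Omega|$. Both are handled by leaning on the uniform operator norm of $\mathcal{E}_{\Omega}$: it supplies the frame bound $C_{d,L}^{2}$ that substitutes for Bessel's inequality, and it forces the cutoff to be taken along the Lipschitz graph $\widetilde h^{+}$ rather than along a fixed hyperplane, which is what keeps $|E_{\ell}|\le|\Omega|+\ell|\wp(\Omega)|$ independently of $\diam(\Omega)$ while charging the errors $|\wp(\Omega)|^{2}k^{2\alpha}$ and $(d-1)L^{2}$ to $\lVert\nabla\chi_{\ell}\rVert_{\infty}$. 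The remaining work — checking $v_{j}\in\H_{0}^{1}(E_{\ell})$, the Plancherel identities for the zero-extensions, and the bathtub optimisation — is routine, as is the final bookkeeping of constants.
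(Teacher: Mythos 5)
Your proof is correct and reaches the stated bound, but it takes a genuinely different route from the paper. The paper transplants the span $S_k$ of the first $k$ Zaremba eigenfunctions into $\mathcal{H}_0^1(\Omega^{\epsilon})$ via $u\mapsto\chi_{\epsilon}\mathcal{E}_{\Omega}[u]$, uses this $k$-dimensional set as a trial space in the min--max characterisation of $\lambda_k(\Omega^{\epsilon})$ (which works because $\|\chi_{\epsilon}\mathcal{E}_{\Omega}[u]\|_{\mathcal{L}^2(\Omega^{\epsilon})}\ge\|u\|_{\mathcal{L}^2(\Omega)}$), and then quotes the classical Li--Yau bound for $\lambda_k(\Omega^{\epsilon})$ as a black box. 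You instead run the Berezin--Li--Yau Fourier argument directly on the transplanted functions $v_j=\chi_{\ell}\mathcal{E}_{\Omega}[u_j]$, substituting for Bessel's inequality the frame bound $\sum_j|\langle g,v_j\rangle|^2\le C_{d,L}^2\|g\|_{\mathcal{L}^2}^2$, which you extract cleanly from the uniform operator norm of $\mathcal{E}_{\Omega}$ by a duality argument. That frame-bound observation is the genuinely new idea in your write-up; it is not needed in the paper's version because the reduction to $\lambda_k(\Omega^{\epsilon})$ makes the Bessel step happen inside the quoted Li--Yau theorem, where orthonormality is restored. Both proofs share the two essential ingredients: the cutoff taken along the Lipschitz graph $h^+$ (which keeps $|E_{\ell}|\le|\Omega|+\ell|\wp(\Omega)|$ uniformly in $\diam(\Omega)$ and charges $(d-1)L^2$ to $\|\nabla\chi_{\ell}\|_{\infty}$), and the calibration $\ell=|\wp(\Omega)|^{-1}k^{-\alpha}$.

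One small discrepancy to flag, though it is not a gap: your final bookkeeping yields a lower-order error of the form $-(1+(d-1)L^2)|\wp(\Omega)|^2k^{2\alpha}-1$ rather than the stated $-|\wp(\Omega)|^2k^{2\alpha}-1-(d-1)L^2$, because $|\nabla\chi_{\ell}|^2\le(1+(d-1)L^2)\ell^{-2}$ carries the $L$-dependence and the $\ell^{-2}$ together. The paper's own computation has exactly the same issue (it writes $|\nabla\chi_{\epsilon}|^2\le(d-1)L^2+\epsilon^{-2}$ where the correct bound is $((d-1)L^2+1)\epsilon^{-2}$), so you are no worse off, and since both forms are $o(k^{2/d})$ when $\alpha<1/d$ the distinction is immaterial for every downstream use in the paper. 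Your parenthetical observation that the restriction $0<\alpha<1/d$ is not needed for the inequality itself, only for it to be nonvacuous asymptotically, is also correct.
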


\begin{proof}
Let $\epsilon > 0$. Fix $\Omega \in \mathcal{O}_{d,L}$ and define the set \begin{equation}
    \Omega^{\epsilon} = \lbrace (x',y) \in \wp(\Omega) \times \mathbb{R} : h^{-}(x') < y < h^{+}(x') + \epsilon\rbrace.
\end{equation} Further for $\epsilon >0$, define the function $\chi_{\epsilon}: \wp(\Omega) \times \mathbb{R} \to [0,1]$ by \begin{equation}
    \chi_{\epsilon}(x',y) := \begin{cases}
        1, & y \leq h^{+}(x'), \\
        1-\frac{(y-h^{+}(x'))}{\epsilon}, & h^{+}(x') < y < h^{+}(x') +\epsilon, \\
        0 & y \geq h^{+}(x') + \epsilon
    \end{cases}
\end{equation} Let $\mathcal{E}$ be the Sobolev extension operator given in Corollary \ref{cor:Zaremba_extension_operator}. For any $u\in \mathcal{H}_{0,\Gamma^{-}}(\Omega)$, we have that $\chi_{\epsilon}\mathcal{E}[u] \in \mathcal{H}_{0}^{1}(\Omega^{\epsilon})$. Moreover, let $S_{k} = \lbrace u_{1},\ldots, u_{k}\rbrace$ denote the span of the first $k$ orthonormal eigenfunctions of $-\Delta_{\Omega}^{Z}$. Then the collection $\lbrace \chi_{\epsilon}\mathcal{E}[u_{1}],\ldots, \chi_{\epsilon}\mathcal{E}[u_{k}]\rbrace \subset \mathcal{H}_{0}^{1}(\Omega^{\epsilon})$ is linearly independent and so we pass this as a trial space into the variational formulation for the $k$-th Dirichlet eigenvalue of $\Omega^{\epsilon}$. By repeated use of the uniform bound given in Corollary \ref{cor:Zaremba_extension_operator}, we have \begin{equation}
\begin{split}
    \lambda_{k}(\Omega^{\epsilon}) & \leq \max_{\substack{u\in S_{k} \\ \lVert u\rVert_{\mathcal{L}^{2}(\Omega)}=1}} \int_{\Omega^{\epsilon}} \left|\nabla \left(\chi_{\epsilon}\mathcal{E}[u]\right)\right|^{2} \\
    & \leq \max_{\substack{u\in S_{k} \\ \lVert u\rVert_{\mathcal{L}^{2}(\Omega)}=1}} \left\lbrace \int_{\Omega} \left|\nabla u\right|^{2} + \int_{\Omega^{\epsilon}\backslash \Omega} \left|\chi_{\epsilon}\nabla \mathcal{E}[u]+\mathcal{E}[u]\nabla \chi_{\epsilon} \right|^{2}\right\rbrace \\
    & \leq \max_{\substack{u\in S_{k} \\ \lVert u\rVert_{\mathcal{L}^{2}(\Omega)}=1}} \left\lbrace \int_{\Omega} \left|\nabla u\right|^{2} + 2\int_{\Omega^{\epsilon}\backslash \Omega} \left|\chi_{\epsilon}\nabla \mathcal{E}[u]\right|^{2} + 2\int_{\Omega^{\epsilon}\backslash \Omega} \left|\mathcal{E}[u]\nabla \chi_{\epsilon} \right|^{2}\right\rbrace \\
    & \leq \max_{\substack{u\in S_{k} \\ \lVert u\rVert_{\mathcal{L}^{2}(\Omega)}=1}} \left\lbrace \int_{\Omega} \left|\nabla u\right|^{2} + 2\int_{\Omega^{\epsilon}\backslash \Omega} \left|\nabla \mathcal{E}[u]\right|^{2} + 2((d-1)L^{2}+\epsilon^{-2})\int_{\Omega^{\epsilon}\backslash \Omega} \left|\mathcal{E}[u]\right|^{2}\right\rbrace \\
    & \leq \max_{\substack{u\in S_{k} \\ \lVert u\rVert_{\mathcal{L}^{2}(\Omega)}=1}} \left\lbrace (1+2C_{d,L})\int_{\Omega} \left|\nabla u\right|^{2} + 2C_{d,L}+ 2C_{d,L}((d-1)L^{2}+\epsilon^{-2}) \right\rbrace \\
    & \leq C_{d,L}' \left( \max_{\substack{u\in S_{k} \\ \lVert u\rVert_{\mathcal{L}^{2}(\Omega)}=1}} \left\lbrace \int_{\Omega} \left|\nabla u\right|^{2} \right\rbrace + (1+(d-1)L^{2}+\epsilon^{-2})\right) \\ 
    & = C_{d,L}' (\zeta_{k}(\Omega) +1+ (d-1)L^{2}+\epsilon^{-2}).
\end{split}
\end{equation} By the classical Dirichlet eigenvalue lower bound of Li and Yau \cite[Cor. 1]{Li-Yau-1983}, we see that \begin{equation}
    \lambda_{k}(\Omega^{\epsilon}) \geq \frac{dW_{d}k^{2/d}}{(d+2)|\Omega^{\epsilon}|^{2/d}}.
\end{equation} Now, observing that $|\Omega^{\epsilon}|=|\Omega|+\epsilon|\wp(\Omega)|$, we obtain that \begin{equation}
    \zeta_{k}(\Omega) \geq \frac{C_{d,L}''k^{2/d}}{(|\Omega|+\epsilon |\wp(\Omega)|)^{2/d}} - 1 -(d-1) L^{2} - \epsilon^{-2}.
\end{equation}
    Taking $\epsilon = |\wp(\Omega)|^{-1}k^{-\alpha}$, then the result immediately follows.
\end{proof}

Before proving Theorem \ref{thm:Zaremba_surface_measure}, we now briefly look at the isoperimetric problem \begin{equation}\label{eq:Odl_isoperim}
    \sup \left\lbrace |\Omega| : \Omega \in \O_{d,L}, \, |\partial \Omega| = 1 \right\rbrace
\end{equation} for domains in $\mathcal{O}_{d,L}$. It is clear that there exists a solution to the isoperimetric problem over $\O_{d,L}$, however we cannot say too much immediately as balls do not lie in $\mathcal{O}_{d,L}$. We now give some remarks on properties of solutions to \eqref{eq:Odl_isoperim}.

By the results of Fuglede in \cite{Fuglede-1989}, for $L >0$ large one can note that any solution to the isoperimetric problem must be (quantifiably) close to the ball of the same perimeter. Moreover, for any $\Omega\in \O_{d,L}$, its Steiner symmetrisation $\Omega^{\#}$ about the hyperplane $\lbrace x_{d} = 0\rbrace $ defined by \begin{equation}
    \Omega^{\#} := \left\lbrace (x',y) \in \wp(\Omega) \times \R : -h(x') < y < h(x')\right\rbrace,
\end{equation} where $h(x'):= (h^{+}(x')-h^{-}(x'))/2$, also lies in $\O_{d,L}$. Thus, we have that $|\Omega^{\#}|=|\Omega|$ and $|\partial \Omega^{\#}| \leq |\partial \Omega|$, with equality if and only if $\Omega^{\#}$ and $\Omega$ are isometric. Hence, any solution to the isoperimetric problem over $\O_{d,L}$ is necessarily symmetric about the hyperplane $\lbrace x_{d} = 0\rbrace $.

As far as the author is aware, it is not known whether the solution to the isoperimetric problem over $\O_{d,L}$ is unique. In dimension two, it appears to be unique and the author has numerically computed solutions to the isoperimetric problem for $\mathcal{O}_{2,L}$, see Figure \ref{fig:isoperimetric_solutions}.

We now show for any $\Omega \in \O_{d,L}$ that imposing the condition $|\partial \Omega|=1$ imposes a constraint on $|\wp(\Omega)|$, which is the final ingredient needed to prove Theorem \ref{thm:Zaremba_surface_measure}.

\begin{figure}
    \centering
    \includegraphics[width=7cm]{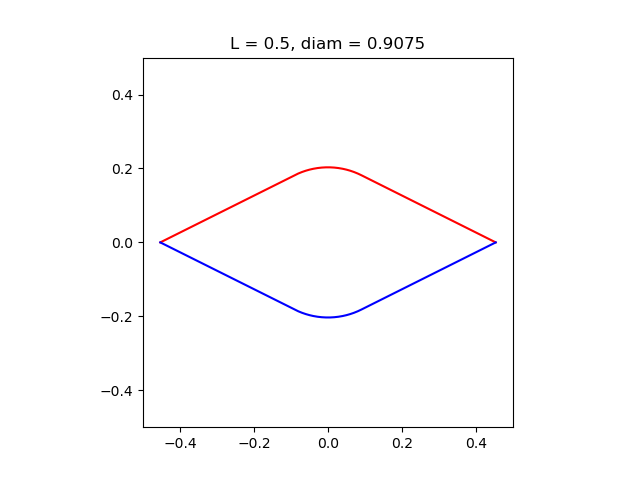}
    \includegraphics[width=7cm]{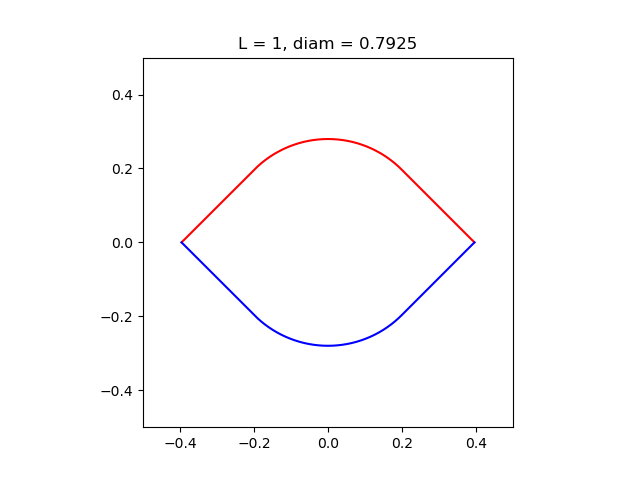}
    \includegraphics[width=7cm]{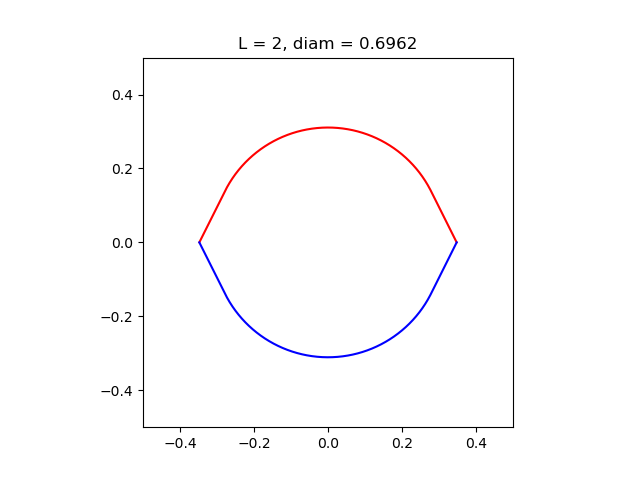}
    \includegraphics[width=7cm]{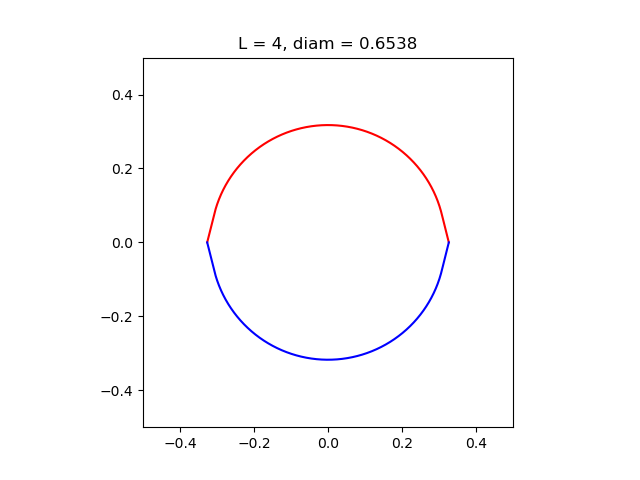}
    \caption{Numerically computed optimal solutions to the isoperimetric problem with unit perimeter over $\O_{2,L}$ with $L=0.5$ (top left), $L=1$ (top right), $L=2$ (bottom left) and $L=4$ (bottom right).}
    \label{fig:isoperimetric_solutions}
\end{figure}

\begin{lemma}\label{lem:projection_perim_ineq}
    Fix $\Omega \in \mathcal{O}_{d,L}$ and suppose that $|\partial\Omega| = 1$, then $|\wp(\Omega)|\leq \frac{1}{2}$.
\end{lemma}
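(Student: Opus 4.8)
The plan is to show that the perimeter of $\Omega$ controls twice the measure of its projection $\wp(\Omega)$, by decomposing $\partial\Omega$ into its upper and lower boundary pieces $\Gamma^{+}$ and $\Gamma^{-}$ and bounding each from below by the area of the shadow $\wp(\Omega)$. Concretely, write $\partial\Omega = \Gamma^{+} \cup \Gamma^{-}$ up to a set of $(d-1)$-dimensional measure zero (the boundary over $\partial\wp(\Omega)$, where $h^{+}$ and $h^{-}$ agree), so that $|\partial\Omega| = |\Gamma^{+}| + |\Gamma^{-}|$. Since $\Gamma^{+}$ is the graph of the concave function $h^{+}$ over $\wp(\Omega)$, the area formula gives
\begin{equation}
    |\Gamma^{+}(\Omega)| = \int_{\wp(\Omega)} \sqrt{1 + |\nabla h^{+}(x')|^{2}}\, dx' \geq \int_{\wp(\Omega)} 1 \, dx' = |\wp(\Omega)|,
\end{equation}
and likewise $|\Gamma^{-}(\Omega)| \geq |\wp(\Omega)|$. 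Here the graph of an $L$-Lipschitz function is automatically $(d-1)$-rectifiable, so $h^{\pm}$ is differentiable almost everywhere on $\wp(\Omega)$ and the area formula applies without further regularity assumptions; the integrand is pointwise $\geq 1$, which is all that is needed.

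Adding the two inequalities yields $1 = |\partial\Omega| = |\Gamma^{+}(\Omega)| + |\Gamma^{-}(\Omega)| \geq 2|\wp(\Omega)|$, which is exactly the claimed bound $|\wp(\Omega)| \leq \tfrac{1}{2}$. The only point requiring a little care is the justification that $\partial\Omega$ really is the (essentially disjoint) union of the two graphs with no extra contribution: this follows from the definition of a convex $L$-Lip domain, since every vertical line through a point of $\wp(\Omega)$ meets $\partial\Omega$ in exactly the two points $(x',h^{-}(x'))$ and $(x',h^{+}(x'))$, and the "lateral" part of $\partial\Omega$ sits over $\partial\wp(\Omega)$, which has $(d-1)$-dimensional Hausdorff measure zero as the boundary of a convex set in $\mathbb{R}^{d-1}$ — but more directly, for an $L$-Lip domain $h^{+}$ and $h^{-}$ agree on $\partial\wp(\Omega)$, so the lateral boundary degenerates entirely and $\partial\Omega = \Gamma^{+}(\Omega) \cup \Gamma^{-}(\Omega)$ outright. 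I do not anticipate a genuine obstacle here; the "hard part", such as it is, is merely being careful about which notion of surface measure is in play and invoking the area formula for Lipschitz graphs correctly, after which the estimate is immediate.
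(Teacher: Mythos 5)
Your argument is essentially identical to the paper's: both express $|\partial\Omega|$ as $\int_{\wp(\Omega)} \sqrt{1+|\nabla h^{+}|^{2}} + \sqrt{1+|\nabla h^{-}|^{2}}$ and bound each integrand below by $1$ to obtain $|\partial\Omega| \geq 2|\wp(\Omega)|$. You supply a bit more detail than the paper about why the decomposition into the two Lipschitz graphs is exhaustive, but the core estimate is the same.
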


\begin{proof}
    For $\Omega \in \mathcal{O}_{d,L}$ observe that \begin{equation}
        |\partial \Omega| = \int_{\wp(\Omega)} \sqrt{1+|\nabla h^{+}|^{2}} + \sqrt{1+|\nabla h^{-}|^{2}} \geq 2|\wp(\Omega)|
    \end{equation} and the result immediately follows.
\end{proof}

\begin{proof}[Proof of Theorem \ref{thm:Zaremba_surface_measure}]
With our previous results in hand, we now follow the outline of the proof of Theorem 1.1 in \cite{Bucur-Freitas-2013} to prove Theorem \ref{thm:Zaremba_surface_measure}. We already know the existence of minimisers to \eqref{eq:Zaremba_perim_problem} from Lemma \ref{lem:existence}. Let $\Omega_{k}^{*}$ be any sequence of minimisers to \eqref{eq:Zaremba_perim_problem} and let $\Omega \in \O_{d,L}$ with $|\partial\Omega| =1$ be fixed. Using Lemma \ref{lem:projection_perim_ineq} and Theorem \ref{thm:Li-Yau}, taking $\alpha = 1/2d$, we see that \begin{equation}
    \zeta_{k}(\Omega_{k}^{*}) \geq \frac{C_{d,L}k^{2/d}}{(|\Omega_{k}^{*}|+k^{-1/2d})^{2/d}}-\frac{1}{4}k^{1/d}-1-(d-1)L^{2}.
\end{equation} Then observe that \begin{equation}
    \frac{C_{d,L}k^{2/d}}{(|\Omega_{k}^{*}|+k^{-1/2d})^{2/d}}-\frac{1}{4}k^{1/d}-1-(d-1)L^{2} \leq \zeta_{k}(\Omega_{k}^{*}) \leq \zeta_{k}(\Omega) = \frac{W_{d}k^{2/d}}{|\Omega|^{2/d}} + o(k^{2/d})
\end{equation} and dividing through by $k^{2/d}$ and taking the limsup \begin{equation}
    \liminf_{k\to +\infty}|\Omega_{k}^{*}|^{2/d} \geq C_{d,L}(W_{d})^{-1}|\Omega|^{2/d} > 0.
\end{equation} Hence, the sequence of minimisers is non-degenerate. Now the only moot point to cover is that any accumulation point of the sequence $\Omega_{k}^{*}$, possibly up to translations of elements of the sequence, is indeed a solution to the isoperimetric problem over $\O_{d,L}$. Knowing the non-degeneracy, by Blaschke's selection theorem, Lemma \ref{lem:Odl_closed} and the inequalities on p. 146 of \cite{van-den-Berg-2015}, up to a sequence of translations, the $\Omega_{k}^{*}$ lie inside a sequentially compact subcollection of $\mathcal{O}_{d,L}$. Hence, there is a convergent subsequence $\Omega_{k_{j}}^{*}$, up to translating elements of the sequence, converging to some $\Omega_{\infty} \in \O_{d,L}$ as $j\to +\infty$. By Theorem \ref{thm:generalised_Weyl_law} we see that \begin{equation}
    \lim_{j\to +\infty} \frac{\zeta_{k_{j}}(\Omega_{k_{j}})}{(k_{j})^{2/d}} = \frac{W_{d}}{|\Omega_{\infty}|^{2/d}},
\end{equation} using Dirichlet-Neumann bracketing i.e. $\mu_{k}(\Omega) \leq \zeta_{k}(\Omega) \leq \lambda_{k}(\Omega)$ for $\Omega \in \mathcal{O}_{d,L}$. Now if $\Omega_{\infty}$ is not a solution to the isoperimetric problem then we see that this would violate the optimality of the sequence $\Omega_{k}^{*}$. Moreover, $\Omega_{\infty}$ is necessarily symmetric about the hyperplane $\lbrace x_{d}=0\rbrace$, up to a translation, by our previous discussion.
\end{proof}

The proof of Theorem \ref{thm:Zaremba_diamater} follows entirely analogously to the proof of Theorem \ref{thm:Zaremba_surface_measure} by noting that the condition $\diam(\Omega) =1$ implies that $|\wp(\Omega)| \leq 2^{-(d-1)} \omega_{d-1}$ via the $(d-1)$-dimensional isodiametric inequality.

\appendix

\section{An open question}\label{sec:appendix}

In this appendix, we discuss a natural question which arises from the techniques used in this paper and a connection with Pólya's conjecture.

Pólya's conjecture, first given in \cite{Pólya-1954}\footnote{As remark in a footnote on p. 132 in \cite{Filonov-Levitin-Polterovich-Sher-2023}, Pólya originally only made the conjecture for planar domains in a slightly different form.}, asserts that for a suitably regular bounded domain $\Omega\subset \mathbb{R}^{d}$ one has \begin{equation}\label{eq:Polya}
    \mathcal{N}_{\Omega}^{D}(\alpha) \leq \frac{\omega_{d}|\Omega|}{(2\pi)^{d}} \alpha^{d/2} \leq \mathcal{N}_{\Omega}^{N}(\alpha).
\end{equation} We call the inequalities on left and right hand side of \eqref{eq:Polya} `Pólya's conjecture for the Dirichlet Laplacian' and `Pólya's conjecture for the Neumann Laplacian' respectively. The inequalities in \eqref{eq:Polya} are known to hold for tiling domains \cite{Polya-1961,Kellner-1966}. It was recently shown in \cite{Filonov-Levitin-Polterovich-Sher-2023} that Pólya's conjecture is true for the Dirichlet Laplacian on balls in any dimension and for the Neumann Laplacian for balls in dimension two. We refer the reader to \cite{Filonov-Levitin-Polterovich-Sher-2023} for a fuller discussion of the conjecture and its history.

The relationship between Pólya's conjecture and asymptotic shape optimisation problems has been explored in \cite{Colbois-ElSoufi-2014} and \cite{Freitas-Lagace-Payette-2021}. Our method of proving Propositions \ref{prop:Neumann_counting function_upper_bound} and \ref{prop:Dir_counting_lower} does lead to a natural question in this direction. The question in itself can be thought of as a possible conjecture concerning a refinement of Dirichlet-Neumann bracketing.

\begin{question}\label{ques:DN-Polya}
    Let $\delta > 0$, and set $Q_{\delta}=(0,\delta)\times (0,\delta)$ and $\Gamma \subset \mathbb{R}^{2}$ be a simple closed smooth curve which partitions $Q_{\delta}$ into two connected non-empty open subsets, which we denote $\Omega_{1}$ and $\Omega_{2}$. Prescribing Neumann boundary conditions on $\partial Q$ and Dirichlet boundary conditions on $\Gamma$, consider the Zaremba Laplacians $-\Delta_{\Omega_{1}}$ and $-\Delta_{\Omega_{2}}$ acting on $\mathcal{L}^{2}(\Omega_{1})$ and $\mathcal{L}^{2}(\Omega_{2})$ with the boundary conditions described. See Figure \ref{fig:DN-Polya} for an illustration of this. Denote their respective eigenvalue counting functions by $\mathcal{N}_{\Omega_{1}}$ and $\mathcal{N}_{\Omega_{2}}$. Then is it true that \begin{equation}\label{eq:DN-Polya}
        \mathcal{N}_{\Omega_{j}}(\alpha) \leq \frac{|\Omega_{j}|}{|Q_{\delta}|} \mathcal{N}_{Q_{\delta}}^{N}(\alpha)
    \end{equation} for all $\alpha > 0$ and $j=1,2$?
\end{question}

\begin{figure}
    \centering
    \begin{tikzpicture}
        \draw[red,thick] (0,0) -- (3,0) -- (3,3) -- (0,3) -- (0,0);
        \draw[blue, thick] (0,1) to[out=-20,in=100] (3,2);
        \draw[,xshift=4cm,red,thick] (0,0) -- (3,0) -- (3,3) -- (0,3) -- (0,0);
        \draw[xshift=4cm, blue, thick] (0.5,1) to[out=0,in=100] (2.5,2);
        \draw[xshift=4cm, blue, thick] (2.5,2) to[out=280,in=0] (1,0.5);
        \draw[xshift=4cm, blue, thick] (1,0.5) to[out=180,in=180] (0.5,1);
        \node at (1.5,0.75) {$\Omega_{1}$};
        \node at (2,1.5) {\color{blue} $\Gamma$};
        \node at (1.5,2.25) {$\Omega_{2}$};
        \node at (1.5,-0.5) {(i) $\Gamma \cap Q_{\delta} \neq \Gamma$};
        \node[xshift=4cm] at (1.5,-0.5) {(ii) $\Gamma \subset Q_{\delta}$};
        \node[xshift=4cm] at (2,2.4) {\color{blue} $\Gamma$};
        \node[xshift=4cm] at (1.7,1.25) {$\Omega_{1}$};
        \node[xshift=4cm] at (1,2) {$\Omega_{2}$};
    \end{tikzpicture}
    \caption{Illustation of the two cases arising from Question \ref{ques:DN-Polya} with $\partial Q_{\delta}$ shown in red and $\Gamma$ shown in blue.}
    \label{fig:DN-Polya}
\end{figure}
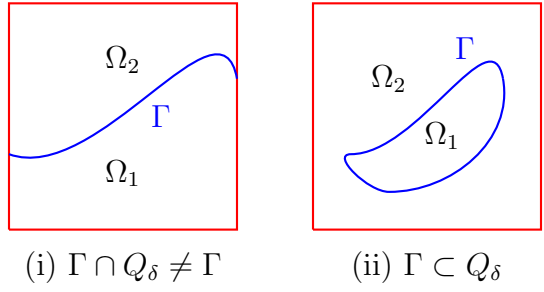

As a remark, in Question \ref{ques:DN-Polya}, if $\Gamma \subset Q_{\delta}$, see for example Figure \ref{fig:DN-Polya}(ii), then the inequality in \eqref{eq:DN-Polya} is weaker than that of Pólya's conjecture for the Dirichlet Laplacian of the domain enclosed by $\Gamma$, as we know Pólya's conjecture holds for squares. It is also worth noting that one only needs to consider the case where $\delta = 1$ by the scaling properties of eigenvalues under homothety.

We now outline how one can prove the two-dimensional Pólya conjecture for the Dirichlet Laplacian for connected smooth, not-necessarily convex, domains, assuming that the inequality \eqref{eq:DN-Polya} holds. As in the proof of Proposition \ref{prop:Neumann_counting function_upper_bound}, fix $n\in \mathbb{N}$ and let $\mu_{n+1}^{*}$ be the $(n+1)$-th Neumann eigenvalue of the unit square. Let $\Omega \subset \mathbb{R}^{2}$ be a smooth bounded domain and fix some $\delta > 0$. Setting $Q_{m,\delta} := \delta(m+(0,1)^{2})$ for $m\in \mathbb{Z}^{2}$, define the collections \begin{equation}
    \mathcal{I}_{\delta} := \lbrace m\in \mathbb{Z}^{2} : Q_{m,\delta} \subset \Omega \rbrace
\end{equation} and \begin{equation}
    \mathcal{J}_{\delta} := \lbrace m\in \mathbb{Z}^{2} : Q_{m,\delta} \cap \Omega \neq \emptyset, m\not\in \mathcal{I}_{\delta}  \rbrace.
\end{equation} By usual bracketing methods one has that \begin{equation}
\begin{split}
    \mathcal{N}_{\Omega}^{D}(\delta^{-2}\mu_{n+1}^{*}) & \leq \sum_{m\in \mathcal{I}_{\delta}} \mathcal{N}_{Q_{m,\delta}}^{N}(\delta^{-2}\mu_{n+1}^{*}) + \sum_{m \in \mathcal{J}_{\delta}} \mathcal{N}_{Q_{m,\delta}\cap \Omega}^{Z}(\delta^{-2}\mu_{n+1}^{*}) \\
    & {\color{red}\overset{(*)}{\leq}} \sum_{m\in \mathcal{I}_{\delta}} \mathcal{N}_{Q_{m,\delta}}^{N}(\delta^{-2}\mu_{n+1}^{*}) + \sum_{m \in \mathcal{J}_{\delta}} \frac{|Q_{m,\delta} \cap \Omega|}{|Q_{m,\delta}|}\mathcal{N}_{Q_{m,\delta}}^{N}(\delta^{-2}\mu_{n+1}^{*}) \\
    & \leq \sum_{m\in \mathcal{I}_{\delta}} n + \sum_{m \in \mathcal{J}_{\delta}} \frac{n|Q_{m,\delta} \cap \Omega|}{|Q_{m,\delta}|} \\
    & \leq n\delta^{-2}\left|\bigcup_{m\in \mathcal{I}_{\delta}} Q_{m,\delta}\right| + n\delta^{-2} \left|\Omega \cap \bigcup_{m\in \mathcal{J}_{\delta}} Q_{m,\delta}\right| \\
    & = n\delta^{-2}|\Omega|,
\end{split}
\end{equation} where we assume a positive answer to Question \ref{ques:DN-Polya} in the inequality highlighted in red in the above. Now taking $\delta = \sqrt{\alpha^{-1}\mu_{n+1}^{*}}$ for $\alpha > 0$, we see that  \begin{equation}
    \mathcal{N}_{\Omega}^{D}(\alpha) \leq \frac{n|\Omega|}{\mu_{n+1}^{*}} \alpha.
\end{equation} Taking the limit as $n\to +\infty$, by Weyl's law we have that \begin{equation}
    \mathcal{N}_{\Omega}^{D}(\alpha) \leq \frac{|\Omega|}{4\pi} \alpha
\end{equation} which is the conjectured bound of Pólya for the Dirichlet Laplacian in dimension two.

One can also ask similar questions to Question \ref{ques:DN-Polya} where one instead puts Neumann boundary conditions on $\Gamma$ and Dirichlet boundary conditions on $\partial Q$, and ask if \begin{equation}
    \mathcal{N}_{\Omega_{j}}(\alpha) \geq \frac{|\Omega_{j}|}{|Q_{\delta}|} \mathcal{N}_{Q_{\delta}}^{D}(\alpha)
\end{equation} for any $\alpha > 0$ and $j = 1,2$. This links to Pólya's conjecture for the Neumann Laplacian in a similar way to the above following the proof of Proposition \ref{prop:Dir_counting_lower}. Of course, one can ask analogous questions to Question \ref{ques:DN-Polya} in higher dimensions.

\vspace{1em}
\textbf{Acknowledgements:} The author is immensely grateful to his PhD supervisor Katie Gittins for numerous suggestions and stimulating discussions on the work in this paper, in particular for referring him to the Neumann eigenvalue counting function upper bound in \cite{Gittins-Lena-2020}. The author also gratefully acknowledges the support of his Engineering and Physical Sciences Research Council Doctoral Training Grant [Grant Number: EP/T518001/1] whilst carrying out this work. \\

\textbf{Data Access Statement}:
This work did not involve any underlying data.

\bibliographystyle{alphaurl}
\bibliography{biblio}
\end{document}